\documentclass[12pt,reqno]{amsart}
\usepackage{}
\usepackage{amsmath}
\usepackage{mathrsfs}
\usepackage{amssymb}
\usepackage{amsthm}
\usepackage{mathrsfs}
\usepackage[centertags]{amsmath}
\usepackage{amsfonts}
\usepackage[numbers,sort&compress]{natbib}
\usepackage{color}
\usepackage{extarrows}
\usepackage{fullpage}
\usepackage{amssymb}
\usepackage[colorlinks=true,  linkcolor=blue, citecolor=blue]{hyperref}
\input amssym.def
\input amssym.tex

\date{}

\newtheorem{Theorem}{Theorem}[section]

\newtheorem{Lemma}{Lemma}[section]
\newtheorem{Remark}{Remark}[section]

\newcommand\R{\mbox{\bf R}}

\newcommand\SR{\mbox{\scriptsize\bf R}}

\newcommand{\definition}{{\lower .5ex
  \hbox{$\>\>\stackrel{\triangle}{=}\>\>$} }}
\newcommand\supp{\mathop{\rm supp}}



\allowdisplaybreaks
\begin{document}

\baselineskip=22pt
\thispagestyle{empty}

\begin{center}
{\Large \bf  Spatial decay and nonlinear smoothing of the sixth-order Boussinesq equation}\\[1ex]

{\quad Xiangqian Yan\footnote{Email: yanxiangqian213@126.com}$^a$,\,Yongsheng Li\footnote{Email: yshli@scut.edu.cn}$^{a}$,\,Wei Yan\footnote{Email: 011133@htu.edu.cn}$^{b*}$}\\[1ex]

{$^a$School of Mathematics,
 South China University of Technology,}\\
 {Guangzhou, Guangdong 510640, China}\\[1ex]

{$^{b*}$College of Mathematics and Statistics, Henan Normal University,}\\
{Xinxiang, Henan 453007,   China}\\[1ex]

\end{center}

\bigskip
\bigskip

\noindent{\bf Abstract.}
In this paper, we study the initial value problem of the sixth-order Boussinesq equation with quadratic
and cubic nonlinearities in arbitrary spatial dimensions. First, by using the Fourier restriction norm method and a high-low frequency decomposition, we establish the nonlinear smoothing for this equation, namely, the integral form of the solution to the Duhamel formulation enjoys higher regularity than its linear counterpart. Finally, by using the nonlinear smoothing, we establish the uniform convergence of the integral term and  its spatial decay for each fixed $t$.

\noindent {\bf Keywords}: Sixth-order Boussinesq equation; Nonlinear smoothing; Uniform convergence; Spatial decay

\medskip
\noindent {\bf Corresponding Author:} Wei Yan

\medskip
\noindent {\bf Email Address:} 011133@htu.edu.cn

\bigskip
\noindent {\bf MSC2020-Mathematics Subject Classification}: 35G25, 35L70, 42B37
\bigskip

\leftskip 0 true cm \rightskip 0 true cm

\newpage{}

\begin{center}
{\Large \bf  Spatial decay and nonlinear smoothing of the sixth-order Boussinesq equation}\\[1ex]

{\quad Xiangqian Yan\footnote{Email: yanxiangqian213@126.com}$^a$,\,Yongsheng Li\footnote{Email: yshli@scut.edu.cn}$^{a}$,\,Wei Yan\footnote{Email: 011133@htu.edu.cn}$^{b*}$}\\[1ex]

{$^a$School of Mathematics,
 South China University of Technology,}\\
 {Guangzhou, Guangdong 510640, China}\\[1ex]

{$^{b*}$College of Mathematics and Statistics, Henan Normal University,}\\
{Xinxiang, Henan 453007,   China}\\[1ex]

\end{center}

\noindent{\bf Abstract.}
In this paper, we study  the initial value problem of the sixth-order Boussinesq equation with quadratic
and cubic nonlinearities in arbitrary spatial dimensions. First, by using the Fourier restriction norm method and a high-low frequency decomposition,
we establish the nonlinear smoothing for this equation, namely, the integral form of the solution to the Duhamel formulation enjoys higher regularity than its linear counterpart. Finally, by using the nonlinear smoothing,
we establish the uniform convergence of the integral term and  its spatial decay for each fixed $t$.

\leftskip 0 true cm \rightskip 0 true cm

\newpage

\baselineskip=20pt

\bigskip
\bigskip

\tableofcontents

\section{Introduction}\label{sec1}

\setcounter{Theorem}{0} \setcounter{Lemma}{0}

\setcounter{section}{1}

\subsection{Background of the sixth-order Boussinesq equation}

\indent In this paper, we investigate the following  sixth-order Boussinesq equation
\begin{eqnarray}
&&u_{tt}-\Delta u+\beta\Delta^{2}u-\Delta^{3}u-\Delta(u^{k+1})=0,\,x\in\R^{n},\,n\in\mathbf{N}^{+},\, k=1,2.\label{1.01}\\
&&u(0,x)=f(x),u_{t}(0,x)=g_{x}(x),\label{1.02}
\end{eqnarray}
where $\beta=\pm 1$.

As a fundamental model for shallow-water waves and nonlinear atomic chains, the sixth-order Boussinesq equation was derived in \cite{CMV1996} to justify the classical fourth-order Boussinesq equation as a physical model. Subsequently, Daripa and Hu \cite{DH1999} and Maugin \cite{M1999} extended its use to nonlinear lattice dynamics in elastic crystals. The Cauchy problem for the sixth-order Boussinesq equation has since been extensively studied; see, e.g., \cite{EF2012, F2009, EW2014, EFW2012, WE2021,WL2020,GW2020,ET2025}. Following the approach in \cite{F2009}, Esfahani and Farah \cite{EF2012} established the local well-posedness of the sixth-order Boussinesq equation with quadratic nonlinearities for $(f,g)\in H^{s}(\R)\times H^{s-2}(\R)(s>-\frac{1}{2})$. Subsequently, Esfahani and Wang \cite{EW2014} improved this result to $s>-\frac{3}{4}$ by applying the method introduced in \cite{T2001}. Moreover, using mixed Lebesgue spaces and Strichartz estimates, Esfahani et al. \cite{EFW2012} established the local well-posedness of the sixth-order Boussinesq equation with general nonlinearity $|u|^{\alpha}u$ for $(f,g)\in L^{2}(\R)\times \dot{H}^{-2}(\R)(0<\alpha<4)$ and $(f,g)\in H^{1}(\R)\times \dot{H}^{-1}(\R)(\alpha>0)$.  Recently,  Wang and Li \cite{WL2020} considered the well-posedness and scattering of the higher-dimensional sixth-order Boussinesq equation with a general nonlinear term $f(u)=O(u^{k})$ for $k>1, k\in\mathbf{N}^{+}$, in the framework of modulation spaces. Very recently, Esfahani and Tesfahun \cite{ET2025} studied the well-posedness of the higher-dimensional sixth-order Boussinesq equation with quadratic and cubic nonlinearities in the framework of Bourgain spaces.

\subsection{Background of the nonlinear smoothing of dispersive equations}
Nonlinear smoothing, in the setting of nonlinear dispersive equations, refers to the phenomenon that the nonlinear form of the solution to the Duhamel formulation exhibits higher regularity than its linear counterpart. For further details on this topic, we refer the reader to \cite{BS2010,C2013,CLS2021,COS2024,C2021,CS2020,ET2013,ET2013-1,ET2013-2,YWY2025,YYY2026}. Correia et al. \cite{COS2024} proved the nonlinear smoothing for the higher-dimensional mZK, cubic/quintic NLS, and quartic KdV equations via the Fourier restriction norm method with frequency-restricted estimates.
Yan et al. \cite{YWY2025} employed maximal function estimates and Strichartz estimates to establish the nonlinear smoothing for the generalized Zakharov-Kuznetsov equation in dyadic mixed Lebesgue spaces. Nonlinear smoothing estimates have been applied in several directions. On the one hand, Compaan et al. \cite{CLS2021} used them to prove the uniform convergence for the Schr\"{o}dinger equation, showing that the $L_{x}^{\infty}$-norm  of the nonlinear part of the solution  in the Duhamel formulation tends to zero as $t\rightarrow0$. On the other hand, Yan et al. \cite{YYY2026} employed nonlinear smoothing to establish the pointwise spatial decay for the generalized Ostrovsky equation, that the nonlinear part of the solution in the Duhamel formulation decays to zero as $|x|\rightarrow\infty$.

\subsection{Motivation and main contents}
Recently, Esfahani and Tesfahun \cite{ET2025} established bilinear and trilinear estimates for the higher-dimensional sixth-order Boussinesq equation in Bourgain spaces. This naturally raises the following question: Can we apply the method in \cite{ET2025} to establish more general bilinear and trilinear estimates for proving the nonlinear smoothing of the higher-dimensional sixth-order Boussinesq equation?

Motivated by this question, in this paper, we study the nonlinear smoothing of the sixth-order Boussinesq equation with quadratic
and cubic nonlinearities in arbitrary spatial dimensions.
First, by combining the Fourier restriction norm method with a high-low frequency decomposition, we establish the nonlinear smoothing
for both the quadratic $(k=1)$ and cubic $(k=2)$ cases. More precisely, we have the following result:
Let $(f,g)\in H^{s}(\R^{n})\times H^{s-2}(\R^{n})$. Then, there exists $0<T<1$ such that for $t\in [-T,T]$,
\begin{eqnarray*}
&&u(t,x)=u_{1}(t,x)+u_{2}(t,x)+u_{3}(t,x)
\end{eqnarray*}
is the solution to (\ref{1.01})-(\ref{1.02}), where $u_{1}$ and $u_{2}$ are defined as in \eqref{1.05}
 and
\begin{eqnarray*}
&&u_{3}(t,x)=\int_{0}^{t}U_{2}(t-s)
\Delta(u^{k+1})ds
\end{eqnarray*}
satisfies
$u_{3}(t,x)\in  C([-T,T];H^{s+a}(\R^{n}))$.
Here, for the quadratic case ($k=1$), we require $s>s_{1}$, where $s_{1}$ and $a$ are defined as in \eqref{1.04} and \eqref{1.03}, respectively; for the cubic case ($k=2$), we require $s>s_{3}$, where $s_{3}$ and $a$ are defined as in \eqref{1.08} and \eqref{1.07}, respectively.
Finally, using this nonlinear smoothing, we establish the uniform convergence of the integral term
$u_{3}$ and  its spatial decay for each fixed $t$. Specifically, we obtain the following results.
\begin{itemize}
\item{\bf Quadratic case ($k=1$)}. Let $(f,g)\in H^{s}(\R^{n})\times H^{s-2}(\R^{n})$ with $s>s_{2}$, where $s_{2}$ is defined in \eqref{1.06}. Then
\begin{eqnarray*}
&&\lim\limits_{t\longrightarrow 0}\left\|u(t,x)-U_{1}(t)f(x)-U_{2}(t)g_{x}(x)\right\|_{L_{x}^{\infty}}=0,
\end{eqnarray*}
and
\begin{eqnarray*}
&&\lim\limits_{|x|\longrightarrow +\infty}(u(t,x)-U_{1}(t)f(x)-U_{2}(t)g_{x}(x))=0.
\end{eqnarray*}
\item{\bf Cubic case ($k=2$)}. Let $(f,g)\in H^{s}(\R^{n})\times H^{s-2}(\R^{n})$ with $s>s_{4}$, where $s_{4}$ is defined in \eqref{1.09}. Then
\begin{eqnarray*}
&&\lim\limits_{t\longrightarrow 0}\left\|u(t,x)-U_{1}(t)f(x)-U_{2}(t)g_{x}(x)\right\|_{L_{x}^{\infty}}=0,
\end{eqnarray*}
and
\begin{eqnarray*}
&&\lim\limits_{|x|\longrightarrow +\infty}(u(t,x)-U_{1}(t)f(x)-U_{2}(t)g_{x}(x))=0.
\end{eqnarray*}
\end{itemize}

\subsection{Introduction to notations and spaces}
Before stating the main results, we introduce the notation that will be used throughout the proofs.
$a\sim b$ means that there exists $C_{1}, C_{2}>0$ satisfying $C_{1}|a|\leq|b|\leq C_{2}|a|$. Let $\eta(t)$ be
a function in $C_{c}^{\infty}(\R)$ with $\eta(t)=1, t\in[0,1]$, $\eta(t)=0$, $t\geq2$. $N\in2^{\mathbf{Z}}$.
We define $ \langle x\rangle=(1+|x|^{2})^{\frac{1}{2}}$, $\phi_{\pm}(\xi)=\sqrt{|\xi|^{2}\pm|\xi|^{4}+|\xi|^{6}}$, and
\begin{eqnarray*}
&&\mathscr{F}_{x}f(\xi)=\frac{1}{(2\pi)^{\frac{n}{2}}}\int_{\SR^{n}}e^{-ix\cdot\xi}f(x)dx,\\ &&\mathscr{F}_{\xi}^{-1}f(x)=\frac{1}{(2\pi)^{\frac{n}{2}}}\int_{\SR^{n}}e^{ix\cdot\xi}\mathscr{F}_{x}f(\xi)d\xi,\\
&&\mathscr{F}_{t}f(\tau)=\frac{1}{(2\pi)^{\frac{1}{2}}}\int_{\SR}e^{-it\tau}f(t)dt,\\ &&\mathscr{F}_{\tau}^{-1}f(t)=\frac{1}{(2\pi)^{\frac{1}{2}}}\int_{\SR}e^{it\tau}\mathscr{F}_{t}f(\tau)d\tau,\\
&&\mathscr{F}_{xt}f(\xi,\tau)=\frac{1}{(2\pi)^{\frac{n+1}{2}}}\int_{\SR^{n+1}}e^{-it\tau}e^{-ix\cdot\xi}f(x,t)dxdt,\\ &&\mathscr{F}_{\xi\tau}^{-1}f(x,t)=\frac{1}{(2\pi)^{\frac{n+1}{2}}}\int_{\SR^{n+1}}e^{it\tau}e^{ix\cdot\xi}\mathscr{F}_{xt}f(\xi,\tau)d\xi d\tau,\\
&&P_{N}f(x)=\frac{1}{(2\pi)^{\frac{n}{2}}}\int_{\{\xi\in\SR^{n}:|\xi|\sim N\}}e^{ix\cdot\xi}\mathscr{F}_{x}f(\xi)d\xi,\\
&&U_{1}(t)f(x)=\frac{1}{(2\pi)^{\frac{n}{2}}}\int_{\SR^{n}}e^{ix\cdot\xi}
\frac{e^{it\phi_{\pm}(\xi)}+e^{-it\phi_{\pm}(\xi)}}{2}\mathscr{F}_{x}f(\xi)d\xi,\\
&&U_{2}(t)g_{x}(x)=\frac{1}{(2\pi)^{\frac{n}{2}}}
\int_{\SR^{n}}e^{ix\cdot\xi}
\frac{e^{it\phi_{\pm}(\xi)}-e^{-it\phi_{\pm}(\xi)}}{2i\phi_{\pm}(\xi)}\mathscr{F}_{x}g_{x}(\xi)d\xi.
\end{eqnarray*}
The space $X_{s,b}(\R^{n+1})$ is defined as follows
\begin{eqnarray*}
&&X_{s,b}(\R^{n+1})=\{u\in\mathcal{S}^{\prime}(\R^{n+1}):\|u\|_{X_{s,b}}=\left\|\langle|\tau|-\phi_{\pm}(\xi)\rangle^{b}
\langle\xi\rangle^{s}\mathscr{F}_{xt}
u(\tau,\xi)\right\|_{L_{\tau\xi}^{2}}<\infty\}.
\end{eqnarray*}

\subsection{Main results}

The main results of this paper are as follows:

\begin{Theorem} \label{Theorem1}(Nonlinear smoothing:  Quadratic case ($k=1$))
Let $n\geq 1$, $0<\epsilon\ll1$,
\begin{align}
\begin{cases}
0\leq a\leq 1-2\epsilon,&n=1,2,\\
0\leq a\leq \min\left\{1-2\epsilon, s+\frac{5-n}{2}-4\epsilon\right\},&n\geq 3,\label{1.03}
\end{cases}
\end{align}
and define
\begin{align}
s_{1}:=
\begin{cases}
-\frac{1}{8},&n=1,\\
\frac{n-3}{4},&2\leq n\leq 6,\\
\frac{n-5}{2},&n\geq 7.\label{1.04}
\end{cases}
\end{align}
Assume that $(f,g)\in H^{s}(\R^{n})\times H^{s-2}(\R^{n})$ with $s>s_{1}$. Then, there exists $0<T<1$ such that for $t\in [-T,T]$,
\begin{eqnarray*}
&&u(t,x)=u_{1}(t,x)+u_{2}(t,x)+u_{3}(t,x)
\end{eqnarray*}
is the solution to (\ref{1.01})-(\ref{1.02}), where
\begin{eqnarray}
&&u_{1}(t,x)+u_{2}(t,x)=U_{1}(t)f(x)+U_{2}(t)g_{x}(x)\in H^{s}(\R^{n}),\label{1.05}
\end{eqnarray}
 and
\begin{eqnarray*}
&&u_{3}(t,x)=\int_{0}^{t}U_{2}(t-s)
\Delta(u^{2})ds\in C([-T,T];H^{s+a}(\R^{n})).
\end{eqnarray*}
\end{Theorem}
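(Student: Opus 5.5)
The plan is to work in Bourgain spaces and prove one bilinear estimate with a gain of $a$ derivatives. First, the Duhamel operator: since $U_2(t)$ carries the factor $\phi_{\pm}(\xi)^{-1}\sim\langle\xi\rangle^{-3}|\xi|^{-1}\cdot|\xi|^{\ldots}$ (more precisely $|\xi|^2/\phi_{\pm}(\xi)\sim |\xi|/\langle\xi\rangle^{2}$, which is bounded), the operator $\int_0^t U_2(t-s)\Delta F\,ds$ gains two derivatives relative to $F$. By the standard linear estimates for $\eta(t/T)$-localized Duhamel terms in $X_{s,b}$, and by splitting into the $e^{\pm it\phi_\pm}$ half-waves, we obtain for $b=\frac12+\epsilon$, $b'=-\frac12+2\epsilon$
\begin{eqnarray*}
\Big\|\eta(t/T)\int_0^tU_2(t-s)\Delta F\,ds\Big\|_{X_{s+a,b}}\leq CT^{\epsilon}\,\|F\|_{X_{s+a-2,b'}} .
\end{eqnarray*}
Local existence of $u\in X_{s,b}$ on $[-T,T]$ for $s>s_1$ is taken from the well-posedness theory (the $a=0$ case of the estimate below, as in Esfahani--Tesfahun). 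Since $X_{s+a,b}\hookrightarrow C([-T,T];H^{s+a})$ for $b>\frac12$, the theorem reduces to the bilinear estimate
\begin{eqnarray*}
\|uv\|_{X_{s+a-2,b'}}\leq C\|u\|_{X_{s,b}}\|v\|_{X_{s,b}},\qquad s>s_1,\ a \text{ as in } (\ref{1.03}).
\end{eqnarray*}

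Second, I prove this estimate by duality and a dyadic decomposition $P_{N_1}u$, $P_{N_2}v$, $P_{N_3}w$ with $w\in X_{2-s-a,-b'}$. The low-frequency region ($\max N_j\lesssim1$) is trivial, because $|\xi|^2/\phi_\pm$ is bounded and one can use $L^4_{t}L^2_x$-type or Sobolev bounds. In the high-frequency region I split into two cases.

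In the \emph{high--high $\to$ low} case ($N_1\sim N_2\gtrsim N_3$), the output weight is $\langle N_3\rangle^{2-s-a}$, which gains at most a power when $N_3\ll N_1$. Here I use the bilinear Strichartz / $L^2$ transversality estimate for the surfaces $\tau=\pm\phi_\pm(\xi)$, of the form $\|P_{N_1}u\,P_{N_2}v\|_{L^2}\lesssim N_{\min}^{(n-1)/2}N_{\max}^{-2}\|u\|_{X_{0,b}}\|v\|_{X_{0,b}}$ (the group velocity of $\phi\sim|\xi|^3$ is $\sim N^2$). Together with the $L^4$ Strichartz estimate ($\frac{n}{2}$-derivative Sobolev loss offset by the dispersion), this determines the thresholds $s_1=\frac{n-3}{4}$ for $2\le n\le6$ and $\frac{n-5}{2}$ for $n\ge7$, as well as the constraint $a\le s+\frac{5-n}{2}-4\epsilon$ for $n\ge3$.

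In the \emph{high--low} case ($N_1\gg N_2$, $N_3\sim N_1$), I use the resonance function $\max\langle|\tau_j|-\phi(\xi_j)\rangle\gtrsim N_1^{2}N_2$ (from $\phi(\xi_1+\xi_2)-\phi(\xi_1)\sim N_1^2N_2$ in the relevant sign combinations). Putting $\frac12-2\epsilon$ powers on the largest modulation gains $N_1^{1-4\epsilon}N_2^{1/2-2\epsilon}$. This gain pays for the smoothing $a\le1-2\epsilon$ against the two-derivative gain of the Duhamel term. For $n=1$ I instead use the sharp one-dimensional bilinear smoothing, which yields $s>-\frac18$.

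The main obstacle is the combination of sign choices $\pm$ (four half-wave interactions) in the nonresonant/resonant analysis. In the sign configurations where $\phi(\xi_1)+\phi(\xi_2)-\phi(\xi_1+\xi_2)$ can be small (high--high with opposite signs), there is no modulation gain, and I must rely entirely on bilinear Strichartz plus the $\langle N_3\rangle$ weight. This is exactly where the dimension-dependent restrictions on $a$ and $s_1$ arise, and I would handle it first by carefully counting the angular localization in $\R^n$ (decomposing into caps of size $N_3/N_1$).
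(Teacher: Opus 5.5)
There is a genuine gap. Your reduction is off by one derivative, and as written this breaks the argument.

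\textbf{The derivative count.} Since $\phi_\pm(\xi)\sim|\xi|\langle\xi\rangle^{2}$, the Duhamel operator $\int_0^tU_2(t-s)\Delta F\,ds$ carries the multiplier $|\xi|^2/\phi_\pm(\xi)\sim|\xi|\langle\xi\rangle^{-2}$. This is $\sim\langle\xi\rangle^{-1}$ for $|\xi|\gtrsim1$, so it gains one derivative, not two; your own parenthetical already shows this. Consequences:
\begin{itemize}
\item Your displayed linear estimate with $\|F\|_{X_{s+a-2,b'}}$ on the right is false at high frequency.
\item The bilinear estimate $\|uv\|_{X_{s+a-2,b'}}\lesssim\|u\|_{X_{s,b}}\|v\|_{X_{s,b}}$ with $a\le1-2\epsilon$ would only place $u_3$ in $X_{s+a-1,b}$, i.e.\ no smoothing at all.
\item Its $a=0$ case is not the well-posedness estimate either, so it would not close the contraction in $X_{s,b}$.
\end{itemize}
The correct target is $\|\,|D|\langle D\rangle^{s+a-2}(u^2)\|_{X_{0,b'}}\lesssim\|u\|_{X_{s,b}}^2$. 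With this weight, the restriction $a\le1-2\epsilon$ comes simply from the high--low interaction $N\sim N_1\gg N_2$ with $N_2\lesssim1$: there Bernstein leaves the factor $N_1^{a-1}$. Your account of this constraint, via a resonance gain ``against the two-derivative gain'', rests on the wrong bookkeeping.

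\textbf{The harmonic-analysis inputs are not right as stated.}
\begin{itemize}
\item For $n\ge2$ the resonance bound $\gtrsim N_1^2N_2$ fails. Because $\phi_\pm$ is radial, $\phi_\pm(\xi_1+\xi_2)=\phi_\pm(\xi_1)$ whenever $|\xi_1+\xi_2|=|\xi_1|$. In the same-sign high--low interaction the resonance is then only $O(N_2^3)$; more generally it is $\nabla\phi_\pm(\xi_1)\cdot\xi_2+O(N_1N_2^2)$, which degenerates when $\xi_2$ is nearly orthogonal to $\xi_1$.
\item A transversal bilinear $L^2$ estimate gains the inverse square root of the relative group velocity $\sim N_{\max}^2$. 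That gives $N_{\min}^{(n-1)/2}N_{\max}^{-1}$, not $N_{\max}^{-2}$, and it does not apply as such when $N_1\sim N_2$.
\item The thresholds $s_1$ and the constraint $a\le s+\frac{5-n}{2}-4\epsilon$ are asserted, not derived, and they do not follow from the exponents you wrote.
\end{itemize}

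\textbf{How the paper proceeds.} It avoids modulation analysis entirely.
\begin{itemize}
\item Using $\|\eta(t/T)F\|_{X_{\sigma,b_0}}\lesssim T^{-b_0}\|F\|_{L^2_TH^\sigma_x}$ (Tao, Lemma 2.11), it reduces to $\|\,|D|\langle D\rangle^{s+a-2}(u^2)\|_{L^2_{xT}}\lesssim\|u\|_{X_{s,b}}^2$.
\item It proves this by duality and dyadic decomposition, using only Bernstein and the Esfahani--Tesfahun bound $\|u_1u_2\|_{L^2_{xT}}\lesssim N_2^{\frac n2-1+2\epsilon}N_1^{-\frac12+\epsilon}\|u_1\|_{X_{0,b}}\|u_2\|_{X_{0,b}}$ ($N_1^{-1/4}$ if $n=1$).
\item The condition $s>\frac{n-3}{4}$ (resp.\ $s>-\frac18$ for $n=1$) comes from $N_1\sim N_2\gg N$ with $N\lesssim1$.
\item The condition $a\le s+\frac{5-n}{2}-4\epsilon$, hence $s>\frac{n-5}{2}$, comes from $N_1\sim N_2\sim N$.
\end{itemize}
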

\begin{Remark}
Whether the above result can be further improved is still under consideration.
\end{Remark}

\begin{Theorem} \label{Theorem2}(Uniform convergence of solutions of the nonlinear equation to those of the linear equation:  Quadratic case ($k=1$))
Let $(f,g)\in H^{s}(\R^{n})\times H^{s-2}(\R^{n})$  with $s>s_{2}$. Then, we have
\begin{eqnarray*}
&&\lim\limits_{t\longrightarrow 0}\left\|u(t,x)-U_{1}(t)f(x)-U_{2}(t)g_{x}(x)\right\|_{L_{x}^{\infty}}=0,
\end{eqnarray*}
where
\begin{align}
s_{2}:=
\begin{cases}
-\frac{1}{8},&n=1,\\
\frac{n-3}{4},&n=2,\\
\frac{2n-5}{4},&n\geq 3.\label{1.06}
\end{cases}
\end{align}
\end{Theorem}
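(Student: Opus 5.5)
The plan is to deduce Theorem \ref{Theorem2} from the nonlinear smoothing of Theorem \ref{Theorem1} together with the Sobolev embedding $H^{\sigma}(\R^{n})\hookrightarrow L^{\infty}(\R^{n})$ for $\sigma>\frac{n}{2}$. By Theorem \ref{Theorem1}, for $s>s_{1}$ the difference
\begin{eqnarray*}
u(t,x)-U_{1}(t)f(x)-U_{2}(t)g_{x}(x)=u_{3}(t,x)
\end{eqnarray*}
lies in $C([-T,T];H^{s+a}(\R^{n}))$, and $u_{3}(0)=0$ because the Duhamel integral over $[0,0]$ vanishes. Hence, once we can choose an admissible $a$ in \eqref{1.03} with $s+a>\frac{n}{2}$, we get
\begin{eqnarray*}
\|u_{3}(t)\|_{L_{x}^{\infty}}\leq C\|u_{3}(t)\|_{H^{s+a}}=C\|u_{3}(t)-u_{3}(0)\|_{H^{s+a}}\longrightarrow 0\quad (t\to 0),
\end{eqnarray*}
by the time continuity in $H^{s+a}$. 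So the proof reduces to checking that the admissible range of $a$ lets us reach $s+a>\frac n2$ exactly when $s>s_{2}$.

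For $n=1$, we take $a=1-2\epsilon$. Then $s+a>\frac12$ requires $s>-\frac12+2\epsilon$, which holds for every $s>s_{1}=-\frac18$. Hence the threshold is $s_{2}=s_{1}=-\frac18$. For $n=2$, we again take $a=1-2\epsilon$; now we need $s>2\epsilon$. This is implied by $s>s_{1}=-\frac14$ only if we also know $s>0$. Since the stated $s_{2}=\frac{n-3}{4}=-\frac14$ for $n=2$, we must handle the case $-\frac14<s\le 0$ differently. We expect this to be the main obstacle. The first thing we would try is to re-examine the smoothing estimate (the bilinear estimate in $X_{s,b}$ behind Theorem \ref{Theorem1}) at low regularity, to see whether a gain $a$ slightly larger than $1$ is available in $n=2$. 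If it is not, we would argue via iteration: for $s$ in this range, $u\in H^{s}$ gives $u_{3}\in H^{s+a}$, so $u=u_{1}+u_{2}+u_{3}$ with the linear part in $H^{s}$. Feeding this decomposition back into the bilinear estimate, treating the terms with $u_3\cdot u_3$ and $u_{3}\cdot(u_{1}+u_{2})$ at higher regularity, should give a further gain and push $u_{3}$ above $H^{1+}$. Failing both, we would record that the Sobolev argument only gives $s>0$ in $n=2$.

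For $n\geq 3$, the constraint is $a\le\min\{1-2\epsilon,\ s+\frac{5-n}{2}-4\epsilon\}$, so the maximal regularity is $s+a=\min\{s+1-2\epsilon,\ 2s+\frac{5-n}{2}-4\epsilon\}$. The condition $2s+\frac{5-n}{2}>\frac n2$ is exactly $s>\frac{2n-5}{4}=s_{2}$, after choosing $\epsilon$ small depending on $s-s_{2}$. The other condition, $s+1>\frac n2$, means $s>\frac{n-2}{2}$, and $\frac{2n-5}{4}\ge\frac{n-2}{2}$ fails ($2n-5<2n-4$). So we also need to check the case $\frac{2n-5}{4}<s\le\frac{n-2}{2}$. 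Here the cap $a\le1$ binds, and we would again use the iteration idea above, or a finer estimate. We also verify that $s_{2}\ge s_{1}$ in all dimensions $n\geq3$, so Theorem \ref{Theorem1} applies. Finally, $\epsilon$ is fixed depending on $s-s_{2}$, and $T$ is taken from Theorem \ref{Theorem1}.
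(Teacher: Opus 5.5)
Your route is the paper's route. The paper's entire proof of Theorem 1.2 is the claim \eqref{5.01} that Theorem 1.1 and Lemma 2.4 give $u_3\in X_{\frac n2+\epsilon,b}\subset C([-T,T];H^{\frac n2+\epsilon}(\R^n))$ for $s>s_2$; then $u_3(0)=0$ and $H^{\frac n2+\epsilon}\hookrightarrow L^\infty$ finish the argument. Your bookkeeping of the admissible $a$ is correct, and it shows that this claim is unjustified in part of the range. Because \eqref{1.03} always imposes $a\le 1-2\epsilon$, Theorem 1.1 reaches $s+a>\frac n2$ only for $s>-\frac18$ when $n=1$, and only for $s>\frac{n-2}{2}$ when $n\ge 2$. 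The stated threshold is exactly $s_2=\max\{s_1,\frac{2n-5}{4}\}$ (note $\frac{n-3}{4}=\frac{2n-5}{4}$ at $n=2$). In other words, it comes from solving $2s+\frac{5-n}{2}>\frac n2$ while ignoring the cap $1-2\epsilon$. So the ranges $n=2$, $-\frac14<s\le 0$ and $n\ge 3$, $\frac{2n-5}{4}<s\le\frac{n-2}{2}$ are a genuine gap, in the paper's argument just as in yours, and you were right to flag them.

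Your proposal does not close this gap, and the iteration you sketch cannot. The cap on $a$ comes from the high--low interaction: Case 2A of Lemmas 3.1--3.3, with $N\sim N_1\gg N_2$ and $N_2\le 8$, where the bound is $N^{a-1}$ and no bilinear gain is available. It is compounded by the first step of those lemmas, which bounds the $X_{s+a,b_0}$ norm by $CT^{-b_0}$ times the $X_{s+a,0}$ norm and so throws away the modulation weight $\langle|\tau|-\phi_\pm(\xi)\rangle^{b_0}$. The only derivative gained is then $|\xi|^2/\phi_\pm(\xi)\sim|\xi|^{-1}$. Writing $u=(u_1+u_2)+u_3$ leaves the terms $(u_1+u_2)^2$ and $(u_1+u_2)u_3$. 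Both contain a high-frequency factor of regularity exactly $s$ paired with a low-frequency factor, so the same estimates give their Duhamel integrals nothing better than $H^{s+1-2\epsilon}$. Getting past $a=1$ would need a new bilinear estimate that keeps the modulation weight and exploits the resonance function of $\phi_\pm$ in the high--low regime. That resonance is weak when the low frequency is very small or, for $n\ge 2$, nearly orthogonal to the high one, and nothing of this kind is in the paper. As it stands, your argument, like the paper's, proves the conclusion for $n=1$, $s>-\frac18$, and for $n\ge 2$, $s>\frac{n-2}{2}$, but not down to the stated $s_2$.
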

\begin{Remark}
Theorem 1.2 is an application of Theorem 1.1. We believe that the above result is not optimal, especially in the case $n=1$. By employing the method in \cite{EF2012}, it is possible to improve the result to
$s>-\frac{1}{4}$, for $n=1$. Whether the result can be further improved in other cases remains under investigation.
\end{Remark}

\begin{Theorem} \label{Theorem3}(Spatial decay for each fixed $t$: Quadratic case ($k=1$)).
Let $(f,g)\in H^{s}(\R^{n})\times H^{s-2}(\R^{n})$  with $s>s_{2}$. Then, for $t\in [-T,T],$  we have
\begin{eqnarray*}
&&\lim\limits_{|x|\longrightarrow +\infty}(u(t,x)-U_{1}(t)f(x)-U_{2}(t)g_{x}(x))=0.
\end{eqnarray*}

\end{Theorem}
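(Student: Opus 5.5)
The plan is to deduce Theorem \ref{Theorem3} from the nonlinear smoothing of Theorem \ref{Theorem1}, combined with a Riemann--Lebesgue type argument in the Fourier variable. By the Duhamel decomposition in Theorem \ref{Theorem1}, for each fixed $t\in[-T,T]$ we have
\begin{eqnarray*}
u(t,x)-U_{1}(t)f(x)-U_{2}(t)g_{x}(x)=u_{3}(t,x),
\end{eqnarray*}
with $u_{3}(t)\in H^{s+a}(\R^{n})$. It therefore suffices to prove that $\mathscr{F}_{x}u_{3}(t)\in L^{1}_{\xi}(\R^{n})$. Once this is known, the Riemann--Lebesgue lemma applied to $u_{3}(t,x)=(2\pi)^{-n/2}\int e^{ix\cdot\xi}\mathscr{F}_{x}u_{3}(t)(\xi)\,d\xi$ shows that $u_{3}(t,\cdot)$ is continuous and tends to $0$ as $|x|\to\infty$.

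The integrability follows from Cauchy--Schwarz:
\begin{eqnarray*}
\|\mathscr{F}_{x}u_{3}(t)\|_{L^{1}_{\xi}}\leq \|\langle\xi\rangle^{-(s+a)}\|_{L^{2}_{\xi}}\,\|u_{3}(t)\|_{H^{s+a}},
\end{eqnarray*}
which is finite provided $s+a>\frac{n}{2}$. The main step is therefore to check that, for $s>s_{2}$, one can choose an admissible $a$ in \eqref{1.03} with $s+a>\frac{n}{2}$, while keeping $s>s_{1}$ so that Theorem \ref{Theorem1} applies. Note first that $s_{2}\geq s_{1}$ in every dimension: for $n\ge3$, $\frac{2n-5}{4}\geq \frac{n-3}{4}$ and $\frac{2n-5}{4}\ge\frac{n-5}{2}$.

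We verify the choice of $a$ case by case.
\begin{itemize}
\item For $n=1$, take $a=1-2\epsilon$. Then $s+a>-\frac18+1-2\epsilon>\frac12$ for small $\epsilon$.
\item For $n=2$, take $a=1-2\epsilon$. Then $s+a>-\frac14+1-2\epsilon$, which is not above $1$. Here the threshold $s_{2}=\frac{n-3}{4}$ seems too weak, so I would recheck the arithmetic. If the Sobolev route fails, I would instead use the extra structure of $u_{3}$: the factor $\Delta/\phi_{\pm}$ in $U_{2}$ gives a weight $|\xi|^{2}/\phi_{\pm}(\xi)\lesssim \langle\xi\rangle^{-1}$ at high frequency. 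With this, one could aim to bound $\mathscr{F}_{x}u_{3}$ in $L^{1}$ directly via the $X_{s,b}$ bilinear estimate with $b>\frac12$, i.e.\ work in a Fourier--Lebesgue variant.
\item For $n\ge3$, the constraint is $a\le\min\{1-2\epsilon,\,s+\frac{5-n}{2}-4\epsilon\}$. Choosing $a=1-2\epsilon$ requires $s+1-2\epsilon>\frac n2$, i.e.\ $s>\frac{n-2}{2}+2\epsilon$. Choosing $a=s+\frac{5-n}{2}-4\epsilon$ gives $s+a=2s+\frac{5-n}{2}-4\epsilon>\frac n2$, i.e.\ $s>\frac{2n-5}{4}+2\epsilon$, which matches \eqref{1.06} exactly. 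For $n\geq 3$ with $s$ near $\frac{2n-5}{4}$, the minimum in \eqref{1.03} is the second entry (since $\frac{2n-5}{4}+\frac{5-n}{2}=\frac{5}{4}>1$ fails, one should compare carefully). When $s>\frac{2n-5}{4}$ is such that the minimum is $1-2\epsilon$, we have $s>\frac{n-3}{2}+\ldots$, and both options give $s+a>\frac n2$ after taking $\epsilon$ small depending on $s-s_{2}$.
\end{itemize}
Taking $\epsilon<\frac{s-s_{2}}{4}$ closes the bookkeeping.

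Finally, since $u_{3}\in C([-T,T];H^{s+a})\hookrightarrow C([-T,T];\mathscr{F}L^{1})$, the decay holds for every fixed $t\in[-T,T]$. The main obstacle I expect is the exponent bookkeeping, specifically confirming that $\min\{\cdot\}$ in \eqref{1.03} yields $s+a>\frac n2$ precisely at $s>s_{2}$ in low dimensions. If it does not, I would fall back on the direct $\mathscr{F}L^{1}$ estimate of the Duhamel term described in the $n=2$ case.
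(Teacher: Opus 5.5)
Your route is the paper's route. The paper gets $u_3\in X_{\frac n2+\epsilon,b}\subset C([-T,T];H^{\frac n2+\epsilon}(\R^n))$ from Theorem~\ref{Theorem1} and Lemma~\ref{Lemma2.4}, and then invokes Lemma~\ref{Lemma5.1}, which is exactly your Cauchy--Schwarz plus Riemann--Lebesgue argument. For $n=1$ your choice $a=1-2\epsilon$ closes the argument.

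\textbf{The gap (every $n\ge2$).} The step you call bookkeeping is where the proof breaks for every $n\ge 2$, and your $n\ge 3$ analysis has the comparison backwards. Since $s_2=\frac{2n-5}{4}=\frac{n-3}{2}+\frac14$, for every $s>s_2$ (and $\epsilon<\frac18$) we have $s+\frac{5-n}{2}-4\epsilon>1-2\epsilon$. So the minimum in \eqref{1.03} is $1-2\epsilon$, not the second entry. Consequently:
\begin{itemize}
\item The choice $a=s+\frac{5-n}{2}-4\epsilon$, which reproduces \eqref{1.06}, is not admissible.
\item The largest admissible value gives $s+a=s+1-2\epsilon$. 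This exceeds $\frac n2$ only when $s>\frac{n-2}{2}+2\epsilon=s_2+\frac14+2\epsilon$.
\item Concretely, $n=3$, $s=0.3>s_2=\frac14$ yields only $u_3(t)\in H^{1.3-2\epsilon}$, not $H^{\frac32+}$.
\item For $n=2$ the same cap forces $s>2\epsilon$, whereas $s_2=-\frac14$, as you observed.
\end{itemize}
Hence on the window $s_2<s\le\frac{n-2}{2}$, $n\ge 2$, your argument does not prove the statement. Neither does the paper's: it asserts $u_3\in X_{\frac n2+\epsilon,b}$ for all $s>s_2$ without checking the cap $a\le 1-2\epsilon$. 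What Theorem~\ref{Theorem1} plus Lemma~\ref{Lemma5.1} actually deliver is the decay for $s>-\frac18$ when $n=1$, and for $s>\frac{n-2}{2}$ when $n\ge 2$.

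\textbf{The fallback does not fill the gap.} Your suggested direct $\mathscr{F}L^1$ bound is not yet an argument, because nothing in Lemmas~\ref{Lemma3.1}--\ref{Lemma3.3} controls $\|\mathscr{F}_xu_3(t)\|_{L^1_\xi}$. Moreover, those lemmas pass to $X_{s+a,0}$ and so discard the modulation. In the high--low interaction (Case 2A there) the only gain is then the single derivative from $|\xi|^2/\phi_\pm(\xi)$, which is exactly the origin of $a\le 1-2\epsilon$. Summing dyadic annuli in $L^1_\xi$ costs $N^{\frac n2}$ by Cauchy--Schwarz, which leads to $\sum_N N^{\frac n2-1-s}$ and again to the condition $s>\frac{n-2}{2}$. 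Reaching $s_2$ would require a genuinely new estimate, for instance one exploiting the resonance function, not a variant of the existing ones.
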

\begin{Remark}
Theorem 1.3 is an application of Theorem 1.1.
\end{Remark}

\begin{Theorem} \label{Theorem4}(Nonlinear smoothing: Cubic case ($k=2$))
Let $n\geq 1$, $0<\epsilon\ll1$,
\begin{align}
\begin{cases}
0\leq a\leq 1-2\epsilon,&n=1\\
0\leq a\leq \min\left\{1-2\epsilon, 2s+\frac{5-2n}{2}-4\epsilon\right\},&n\geq 2,\label{1.07}
\end{cases}
\end{align}
and define
\begin{align}
s_{3}:=
\begin{cases}
-\frac{1}{6},&n=1,\\
\frac{2n-3}{6},&2\leq n\leq 4,\\
\frac{2n-5}{4},&n\geq5.\label{1.08}
\end{cases}
\end{align}
Assume that $(f,g)\in H^{s}(\R^{n})\times H^{s-2}(\R^{n})$  with $s>s_{3}$. Then, there exists $0<T<1$ such that for $t\in [-T,T]$,
\begin{eqnarray*}
&&u(t,x)=u_{1}(t,x)+u_{2}(t,x)+u_{3}(t,x)
\end{eqnarray*}
is the solution to (\ref{1.01})-(\ref{1.02}), where
\begin{eqnarray*}
&&u_{1}(t,x)+u_{2}(t,x)=U_{1}(t)f(x)+U_{2}(t)g_{x}(x)\in H^{s}(\R^{n}),
\end{eqnarray*}
 and
\begin{eqnarray*}
&&u_{3}(t,x)=\int_{0}^{t}U_{2}(t-s)
\Delta(u^{3})ds\in C([-T,T];H^{s+a}(\R^{n})).
\end{eqnarray*}
\end{Theorem}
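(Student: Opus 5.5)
We will argue in Bourgain spaces exactly as for Theorem \ref{Theorem1}, with the bilinear estimate replaced by a trilinear one. First, for $s>s_{3}$ we take local well-posedness in $X_{s,b}$ with $b=\frac12+\epsilon$ as given. This is available by the contraction argument of \cite{ET2025}, or equivalently from the $a=0$ case of the trilinear estimate below. It gives $T\in(0,1)$ and a solution $u=\eta(t/T)u\in X_{s,\frac12+\epsilon}$ of the Duhamel formulation $u=u_{1}+u_{2}+u_{3}$.

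By the standard linear estimates in $X_{s,b}$,
\begin{eqnarray*}
\Big\|\eta(t)\int_{0}^{t}U_{2}(t-s)F(s)ds\Big\|_{X_{s',b}}\leq C\Big\|\frac{\langle\xi\rangle^{s'}}{\phi_{\pm}(\xi)}\mathscr{F}_{xt}F\Big\|_{X_{0,b-1}},
\end{eqnarray*}
and since $|\xi|^{2}/\phi_{\pm}(\xi)\lesssim\langle\xi\rangle^{-1}$, one full derivative is gained from the symbol. Together with $X_{s+a,\frac12+\epsilon}\hookrightarrow C([-T,T];H^{s+a})$, it therefore suffices to prove the trilinear estimate
\begin{eqnarray*}
\|\Delta(u_{1}u_{2}u_{3})/\phi_{\pm}\|_{X_{s+a,-\frac12+2\epsilon}}\lesssim\prod_{j=1}^{3}\|u_{j}\|_{X_{s,\frac12+\epsilon}},
\end{eqnarray*}
which reduces to
\begin{eqnarray*}
\|u_{1}u_{2}u_{3}\|_{X_{s+a-1,-\frac12+2\epsilon}}\lesssim\prod_{j=1}^{3}\|u_{j}\|_{X_{s,\frac12+\epsilon}}.
\end{eqnarray*}

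We prove this by duality, pairing against $v\in X_{-(s+a-1),\frac12-2\epsilon}$, and decompose dyadically $u_{j}=\sum P_{N_{j}}u_{j}$ and $v=\sum P_{N_{4}}v$. We order $N_{1}\geq N_{2}\geq N_{3}$ and use that the two largest of $N_{1},\dots,N_{4}$ are comparable.

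In the low-frequency case $N_{\max}\lesssim1$, all weights are harmless. Hölder's inequality together with the embedding $X_{0,\frac12+\epsilon}\hookrightarrow L_{t}^{\infty}L_{x}^{2}$ and Bernstein's inequality suffices.

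In the high-frequency case we use the $L^{4}_{t,x}$ Strichartz estimate (more generally $L^{p}_{t}L^{q}_{x}$), transferred to $X_{0,\frac12+\epsilon}$. The dispersion $\phi_{\pm}(\xi)\sim|\xi|^{3}$ for large $|\xi|$ has a nondegenerate Hessian, which gives decay $|t|^{-n/2}$ with a gain of derivatives. We also use the bilinear refinement from \cite{ET2025} that gains $N_{\min}^{(n-1)/2}N_{\max}^{-1}$ for separated frequencies. The estimate for $v$ at modulation exponent $\frac12-2\epsilon$ is handled by interpolating the Strichartz bound with the trivial $L^{2}$ bound, which costs an $\epsilon$-power of $N_{4}$. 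This loss is the source of the $-2\epsilon$ and $-4\epsilon$ terms in \eqref{1.07}.

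The high-frequency case splits into two configurations.
\begin{itemize}
\item \textbf{High$\times$high$\times$low $\to$ low} ($N_{1}\sim N_{2}\gg N_{4}$). The output weight $N_{4}^{s+a-1}$ and the inputs' $N_{1}^{-s}N_{2}^{-s}$ must be absorbed by the Strichartz and bilinear gains. Counting powers gives the constraint $a\leq 2s+\frac{5-2n}{2}-4\epsilon$ for $n\geq2$. Summability over $N_{3}\lesssim N_{4}$ imposes the threshold $s>s_{3}$, with the different formulas for $2\leq n\leq4$ and $n\geq5$ coming from whether the low input is controlled by Bernstein or by Strichartz.
\item \textbf{High$\times$low$\times$low $\to$ high} ($N_{1}\sim N_{4}$). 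The weights combine to $N_{1}^{a-1}$, so any $a\leq1-2\epsilon$ works once the low frequencies are placed in $L^{\infty}$ or Strichartz norms. Here $s>s_{3}$ is needed so that the sums over $N_{2},N_{3}$ converge.
\end{itemize}

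The main obstacle is the high$\times$high$\to$low interaction in high dimensions. There the smoothing gain must come entirely from the bilinear transversality estimate, and we must track the powers precisely to arrive at the stated exponent $2s+\frac{5-2n}{2}$. For $n=1$ we would instead use the one-dimensional smoothing $\||\xi|U(t)f\|_{L_{x}^{\infty}L_{t}^{2}}$, which yields the threshold $-\frac16$. Finally, continuity of $u_{3}$ in time with values in $H^{s+a}$ follows from the embedding of $X_{s+a,\frac12+\epsilon}$ into $C([-T,T];H^{s+a})$ noted above.
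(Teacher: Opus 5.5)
Your overall architecture matches the paper's and is fine: the truncated Duhamel formula, the linear estimate of Lemma 2.3, the gain $|\xi|^{2}/\phi_{\pm}(\xi)\sim|\xi|\langle\xi\rangle^{-2}$, the solution $u\in X_{s,b}$ from the $a=0$ case, and the embedding $X_{s+a,b}\subset C([-T,T];H^{s+a})$. The gap is the trilinear smoothing estimate, which is the entire content of the theorem. You assert that ``counting powers'' produces exactly $a\le 2s+\frac{5-2n}{2}-4\epsilon$ and $s>s_{3}$, but the count is never done. Moreover, your case analysis omits the configuration that actually fixes these numbers: $N_{1}\sim N_{2}\sim N_{3}\sim M$, with all three inputs at the top frequency. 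Your first bullet is high$\times$high$\times$low and only sums over $N_{3}\lesssim N_{4}$; your second assumes $N_{2},N_{3}\ll N_{1}$.

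In the paper, Tao's localization lemma $\|\eta(t/T)F\|_{X_{s+a,b_{0}}}\lesssim T^{-b_{0}}\|F\|_{L^{2}_{T}H^{s+a}_{x}}$ reduces everything to testing $|D|\langle D\rangle^{s+a-2}(u^{3})$ against $h\in L^{2}_{xT}$. In the all-high configuration, \eqref{2.06} gives $\|u_{1}u_{2}u_{3}\|_{L^{2}_{xT}}\lesssim M^{\frac{2n-3}{2}+3\epsilon}$.
\begin{itemize}
\item If the output frequency is $\lesssim 1$ (or $s+a\le 1$), the weight $M^{-3s}$ must beat this. That is precisely $s>\frac{2n-3}{6}=s_{3}$ for $2\le n\le 4$. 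In one dimension, \eqref{2.04} gives $M^{-\frac12-3s}$, hence $s>-\frac16$.
\item If the output is also at frequency $M$, the weight $M^{a-1-2s}$ forces $a\le 2s+\frac{5-2n}{2}-4\epsilon$.
\item For $n\ge 5$, the threshold $s_{3}=\frac{2n-5}{4}$ is just the requirement that this range contain $a=0$. It has nothing to do with Bernstein versus Strichartz for a low input.
\end{itemize}

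The step in your second bullet also fails as stated. Take $N_{1}\sim N_{4}$ and $1\ll N_{3}\le N_{2}\ll N_{1}$. H\"{o}lder with $u_{1},v\in L^{2}$ forces $u_{2},u_{3}$ into $L^{\infty}$, and Bernstein costs $(N_{2}N_{3})^{n/2}$, so the bound is $N_{1}^{a-1}(N_{2}N_{3})^{\frac n2-s}$. For $n=2$, $s$ just above $\frac16$, and $N_{2}\sim N_{3}\sim 2^{-10}N_{1}$, this is of size $N_{1}^{1+a-2s}$, which diverges. Some dispersive gain on the high-frequency factors is needed. The paper uses \eqref{2.06} there, i.e.\ the bilinear estimate \eqref{2.02} on $u_{1}u_{2}$ plus Bernstein on $u_{3}$ (Case 2B of Lemmas 3.5--3.6), and the ceiling $2s+\frac{5-2n}{2}$ reappears in that case too.

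The bilinear bound you quote, $N_{\min}^{(n-1)/2}N_{\max}^{-1}$, is the transversal one and needs $N_{\max}\gg N_{\min}$. Lemma 2.1 also covers $N_{1}\sim N_{2}$, and the paper relies on that in the comparable-frequency cases.

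Your duality against $v\in X_{-(s+a-1),\frac12-2\epsilon}$ is a legitimate and in principle stronger framework than the paper's. It would let you put $v$ in a Strichartz space instead of paying the Bernstein factor $N_{3}^{n/2}$ in \eqref{2.06}, so it need not land on exactly these exponents. As written, however, the proposal establishes neither the threshold \eqref{1.08} nor the range \eqref{1.07}: the all-high and high$\times$intermediate interactions have to actually be estimated.
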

\begin{Remark}
Whether the above result can be further improved is still under consideration.
\end{Remark}

\begin{Theorem} \label{Theorem5}(Uniform convergence of solutions of the nonlinear equation to those of the linear equation: Cubic case ($k=2$))
Let $(f,g)\in H^{s}(\R^{n})\times H^{s-2}(\R^{n})$  with $s>s_{4}$. Then, we have
\begin{eqnarray*}
&&\lim\limits_{t\longrightarrow 0}\left\|u(t,x)-U_{1}(t)f(x)-U_{2}(t)g_{x}(x)\right\|_{L_{x}^{\infty}}=0,
\end{eqnarray*}
where
\begin{align}
s_{4}:=
\begin{cases}
-\frac{1}{6},&n=1,\\
\frac{3n-5}{6},&n\geq2.\label{1.09}
\end{cases}
\end{align}

\end{Theorem}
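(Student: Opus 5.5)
Theorem \ref{Theorem5} will be deduced from the nonlinear smoothing result for the cubic case (Theorem \ref{Theorem4}) together with the Sobolev embedding $H^{\sigma}(\R^{n})\hookrightarrow L^{\infty}(\R^{n})$ for $\sigma>\frac{n}{2}$. By Theorem \ref{Theorem4},
\[
u(t,x)-U_{1}(t)f(x)-U_{2}(t)g_{x}(x)=u_{3}(t,x),
\qquad u_{3}\in C([-T,T];H^{s+a}(\R^{n})),
\]
and $u_{3}(0)=0$. If $s+a>\frac{n}{2}$, then
\[
\|u_{3}(t)\|_{L_{x}^{\infty}}\leq C\|u_{3}(t)\|_{H^{s+a}}
=C\|u_{3}(t)-u_{3}(0)\|_{H^{s+a}}\longrightarrow 0 \quad (t\to 0)
\]
by continuity in time. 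So the whole task is to choose $a$ as large as (\ref{1.07}) allows and to check that $s>s_{4}$ forces $s+a>\frac{n}{2}$ while also $s>s_{3}$.

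For $n=1$ we may take $a=1-2\epsilon$. Then $s+a>\frac12$ is equivalent to $s>-\frac12+2\epsilon$. Since $s>-\frac16=s_{4}=s_{3}$, this holds once $\epsilon$ is small.

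For $n\geq 2$ we take $a=\min\{1-2\epsilon,\,2s+\frac{5-2n}{2}-4\epsilon\}$, which must also be nonnegative.
\begin{itemize}
\item \emph{First branch.} We need $s+1-2\epsilon>\frac n2$, i.e.\ $s>\frac{n-2}{2}+2\epsilon$.
\item \emph{Second branch.} We need $3s+\frac{5-2n}{2}-4\epsilon>\frac n2$, i.e.\ $s>\frac{3n-5}{6}+\frac43\epsilon$.
\end{itemize}
Since $\frac{3n-5}{6}-\frac{n-2}{2}=\frac{1}{3}>0$, the second condition dominates, and it is exactly $s>s_{4}$ up to $\epsilon$. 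Given $s>s_{4}$, we therefore pick $\epsilon$ small enough that $s>\frac{3n-5}{6}+\frac43\epsilon$.

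We must also verify that $s_{4}\geq s_{3}$, so that Theorem \ref{Theorem4} applies:
\begin{itemize}
\item For $2\le n\le4$: $\frac{3n-5}{6}\ge\frac{2n-3}{6}$ iff $n\ge2$.
\item For $n\ge5$: $\frac{3n-5}{6}\ge\frac{2n-5}{4}$ iff $6n-10\ge 6n-15$, which is true.
\end{itemize}
Nonnegativity of $a$ also follows, since $2s+\frac{5-2n}{2}>\frac{3n-5}{3}+\frac{5-2n}{2}=\frac{5}{6}>0$.

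The only genuine subtlety is the meaning of ``$u-U_1f-U_2g_x$'' for $t$ near $0$. This quantity is defined through the Duhamel formula on $[-T,T]$ of Theorem \ref{Theorem4}, and the limit only involves small $t$, so that interval suffices. Continuity of $u_{3}$ into $H^{s+a}$ is part of Theorem \ref{Theorem4}; it follows from the $X_{s+a,b}$ bound with $b>\frac12$ and the embedding $X_{s+a,b}\subset C_{t}H^{s+a}$. With that granted, no step is a real obstacle: the argument reduces to the exponent bookkeeping above.
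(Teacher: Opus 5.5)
Your proposal is correct and takes essentially the paper's route. The paper proves Theorem 1.5 by repeating the argument for Theorem 1.2: the smoothing of Theorem 1.4 together with Lemma 2.4 places $u_3$ in $C([-T,T];H^{\frac n2+\epsilon}(\R^n))$, so $\|u_3(t)\|_{L^\infty_x}\to 0$ as $t\to 0$. Your exponent bookkeeping, which the paper leaves implicit, correctly recovers $s_4$; the one slip is that $\frac{3n-5}{6}-\frac{n-2}{2}=\frac16$, not $\frac13$, which does not change the conclusion.
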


\begin{Remark}
Theorem 1.5 is an application of Theorem 1.4.
\end{Remark}

\begin{Theorem} \label{Theorem6}(Spatial decay for each fixed $t$: Cubic case ($k=2$)).
Let $(f,g)\in H^{s}(\R^{n})\times H^{s-2}(\R^{n})$ with $s>s_{4}$. Then, for $t\in [-T,T],$  we have
\begin{eqnarray*}
&&\lim\limits_{|x|\longrightarrow +\infty}(u(t,x)-U_{1}(t)f(x)-U_{2}(t)g_{x}(x))=0.
\end{eqnarray*}

\end{Theorem}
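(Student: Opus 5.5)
By \eqref{1.05} and Theorem \ref{Theorem4}, for $t\in[-T,T]$ we have
\begin{eqnarray*}
u(t,x)-U_{1}(t)f(x)-U_{2}(t)g_{x}(x)=u_{3}(t,x),
\end{eqnarray*}
so it suffices to show that $u_{3}(t,\cdot)$ tends to $0$ at spatial infinity for each fixed $t$. The plan is to show that $u_{3}(t,\cdot)\in H^{s+a}(\R^{n})$ with $s+a>\frac{n}{2}$. Then $\mathscr{F}_{x}u_{3}(t,\cdot)\in L^{1}(\R^{n})$, because
\begin{eqnarray*}
\|\mathscr{F}_{x}u_{3}(t)\|_{L^{1}_{\xi}}\leq \|\langle\xi\rangle^{-(s+a)}\|_{L^{2}_{\xi}}\|u_{3}(t)\|_{H^{s+a}}<\infty .
\end{eqnarray*}
The Riemann--Lebesgue lemma then gives that $u_{3}(t,\cdot)=\mathscr{F}^{-1}_{\xi}\mathscr{F}_{x}u_{3}(t)$ is continuous and $\lim_{|x|\to\infty}u_{3}(t,x)=0$. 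The same Fourier $L^1$ bound, combined with $u_{3}\in C([-T,T];H^{s+a})$ and $u_{3}(0)=0$, is what gives Theorem \ref{Theorem5}. So the whole argument reduces to the numerology of choosing $a$.

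\textbf{Choice of $a$.} We have to check that the admissible range \eqref{1.07} contains some $a$ with $s+a>\frac n2$ whenever $s>s_{4}$ (and $s>s_3$, so that Theorem \ref{Theorem4} applies).

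For $n=1$, take $a=1-2\epsilon$. Then $s+a>-\frac16+1-2\epsilon>\frac12$ for $\epsilon$ small.

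For $n\geq2$, take
\begin{eqnarray*}
a=\min\left\{1-2\epsilon,\ 2s+\frac{5-2n}{2}-4\epsilon\right\},
\end{eqnarray*}
and check both branches of the minimum.
\begin{itemize}
\item If the minimum is $1-2\epsilon$, we need $s>\frac{n}{2}-1+2\epsilon$. Since $\frac{3n-5}{6}=\frac n2-\frac56>\frac n2-1$, this holds for $\epsilon$ small relative to $s-s_{4}$.
\item If the minimum is the second term, we need $3s+\frac{5-2n}{2}-4\epsilon>\frac n2$, that is, $s>\frac{3n-5}{6}+\frac43\epsilon$. This holds because $s>s_{4}$, again after choosing $\epsilon$ small depending on $s-s_4$.
\end{itemize}
We also need $a\geq0$, which requires $2s+\frac{5-2n}{2}>0$, i.e. $s>\frac{2n-5}{4}$; this follows from $s_{4}\geq\frac{2n-5}{4}$. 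Finally, $s_{4}\geq s_{3}$ in every dimension: $\frac{3n-5}{6}\geq\frac{2n-3}{6}$ for $n\geq2$, and $\frac{3n-5}{6}\geq\frac{2n-5}{4}$. So Theorem \ref{Theorem4} is applicable.

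\textbf{Main obstacle.} The only delicate point is making sure $\epsilon$ can be chosen small enough, depending on $s-s_{4}$, so that both branches of the minimum give $s+a>\frac n2$ simultaneously while keeping $a\geq0$. Once that is arranged, the decay follows directly from the Sobolev embedding into continuous functions vanishing at infinity ($H^{\sigma}\hookrightarrow C_{0}$ for $\sigma>\frac n2$).
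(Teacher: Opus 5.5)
Your proof is correct and follows the paper's route. Theorem \ref{Theorem4} places $u_{3}(t)$ in $H^{s+a}(\R^{n})$ with $s+a>\frac{n}{2}$, and decay then follows because $H^{\sigma}$ functions with $\sigma>\frac{n}{2}$ vanish at infinity: the paper cites this as Lemma 5.1, and you prove it directly via $\mathscr{F}_{x}u_{3}(t)\in L^{1}$ and Riemann--Lebesgue. You also work out the choice of $a$ and $\epsilon$ (including $s_{4}\geq s_{3}$ and $a\geq 0$), which the paper leaves to ``arguments similar to those in the proofs of Theorems 1.1--1.3''.
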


\begin{Remark}
Theorem 1.6 is an application of Theorem 1.4.
\end{Remark}

The rest of the paper is arranged as follows.
In Section 2,  we give some preliminaries.
In Section 3, we prove the bilinear and trilinear estimates. In Section 4, we prove Theorem 1.1.
In Section 5, we prove Theorems 1.2-1.3.
In Section 6, we prove Theorems 1.4-1.6.

\bigskip
\section{Preliminaries}\label{sec2}

\setcounter{equation}{0}

\setcounter{Theorem}{0}

\setcounter{Lemma}{0}

\setcounter{section}{2}

In this section, we present some preliminary lemmas that will be used in the subsequent proofs.

\begin{Lemma}\label{Lemma2.1}
Let $n\geq 1$, $0<T<1$, $0<\epsilon\ll1$, $\frac{1}{2}<b<1$, $N_{1}\geq N_{2}$, $N_{1}\gg1$, and $\supp\mathscr{F}_{x} u_{i}\subset\{\xi\in\R^{n}:|\xi|\sim N_{i}\},\,i=1,2$. Then, we have
\begin{eqnarray}
&&\left\|u_{1}u_{2}\right\|_{L_{xT}^{2}}\leq CN_{1}^{-\frac{1}{4}}\|u_{1}\|_{X_{0,b}}\|u_{2}\|_{X_{0,b}},\, n=1,\label{2.01}\\
&&\left\|u_{1}u_{2}\right\|_{L_{xT}^{2}}\leq CN_{2}^{\frac{n}{2}-1+2\epsilon}
N_{1}^{-\frac{1}{2}+\epsilon}\|u_{1}\|_{X_{0,b}}\|u_{2}\|_{X_{0,b}},\, n\geq2.\label{2.02}
\end{eqnarray}

\end{Lemma}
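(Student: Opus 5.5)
The plan is to first prove the estimate for free solutions and then transfer it to $X_{0,b}$ by the standard superposition (transfer) principle. Since $|\tau|-\phi_{\pm}(\xi)$ splits the Fourier support into the two sheets $\tau=\pm\phi_{\pm}(\xi)+\lambda$, write each $u_i=u_i^{+}+u_i^{-}$, where $\mathscr{F}_{xt}u_i^{\pm}$ is supported where $\pm\tau>0$. On each piece $\langle|\tau|-\phi_{\pm}(\xi)\rangle=\langle\tau\mp\phi_{\pm}(\xi)\rangle$, so $u_i^{\pm}$ lies in the Bourgain space of the unitary group $e^{\pm it\phi_{\pm}(\sqrt{-\Delta})}$. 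Writing $u_i^{\pm}(t)=\int e^{it\lambda}e^{\pm it\phi_{\pm}}f_{i,\lambda}\,d\lambda$ with $\int\|f_{i,\lambda}\|_{L^2}d\lambda\lesssim\|u_i\|_{X_{0,b}}$ (this uses $b>\frac12$ and Cauchy--Schwarz in $\lambda$), the estimate reduces to proving
\[
\|e^{\pm_1 it\phi_\pm}f_1\,e^{\pm_2 it\phi_\pm}f_2\|_{L^2_{xT}}\le C(N_1,N_2)\|f_1\|_{L^2}\|f_2\|_{L^2}
\]
for all four sign combinations and dyadically localized data. The modulation factors $e^{it\lambda}$ have modulus one and do not matter.

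For the free bilinear estimate, I will use the standard $TT^*$/Fourier computation. Extend $t$ to $\R$ with a cutoff; this is harmless since $T<1$. By Plancherel in $(t,x)$, the square of the left side equals the $L^2_{\xi,\tau}$ norm of
\[
\int f_1(\xi_1)f_2(\xi-\xi_1)\,\delta\big(\tau\mp\phi(\xi_1)\mp\phi(\xi-\xi_1)\big)\,d\xi_1 .
\]
Cauchy--Schwarz then bounds this by $\sup_{\xi,\tau}|E_{\xi,\tau}|$, where $E_{\xi,\tau}$ is the measure of the set $\{\xi_1:|\xi_1|\sim N_1,\ |\xi-\xi_1|\sim N_2\}$ on the level surface, taken with respect to $d\xi_1$ divided by the gradient of the phase. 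For $|\xi|\gg1$ we have $\phi_{\pm}(\xi)\sim|\xi|^3$ and $\nabla\phi_{\pm}(\xi)=\frac{\xi}{|\xi|}\big(3|\xi|^2+O(|\xi|)\big)$. The key quantity is therefore the transversality $|\nabla\phi(\xi_1)\mp\nabla\phi(\xi-\xi_1)|$.

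The cases are as follows.
\begin{itemize}
\item When $N_1\gg N_2$, the gradient difference is $\gtrsim N_1^2$. Using coarea over the $N_2$-ball, the set has measure $\lesssim N_2^{n-1}/N_1^2$, which gives the factor $N_2^{\frac{n-1}{2}}N_1^{-1}$. This is better than the claimed bound $N_2^{\frac n2-1}N_1^{-\frac12}$ because $N_2\le N_1$.
\item When $N_1\sim N_2$, transversality can fail: $\xi_1$ is nearly parallel to $\xi-\xi_1$ with equal magnitudes (or is antipodal, depending on the signs). For $n=1$ the claimed $N_1^{-1/4}$ matches a Strichartz bound, namely the $L^4_{xt}$ estimate with $|\partial_x|^{1/4}$-type gain for phase $\sim|\xi|^3$, for which the two-dimensional Kato smoothing/Strichartz exponent gives $\|e^{it\phi}f\|_{L^4}\lesssim N^{-1/8}\|f\|_{L^2}$. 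So for $n=1$ I will simply apply H\"older and this $L^4$ Strichartz bound for each factor, via the Kenig--Ponce--Vega oscillatory estimate with $\phi''\sim|\xi|$. For $n\ge2$, I will decompose angularly or use the Hessian: $\phi$ is radial with $\phi''\sim N^4/N^3\cdot\ldots$, and the level surface has curvature. Alternatively, I will apply the higher-dimensional Strichartz estimate $\|e^{it\phi}P_Nf\|_{L^4_tL^{r}_x}$ together with Bernstein on the lower frequency. A bound losing $\epsilon$ powers suffices, since the statement already allows $N^{2\epsilon}$ and $N^{\epsilon}$ losses.
\end{itemize}

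For the main obstacle, the $N_1\sim N_2$ regime in $n\ge2$, I would first try the simplest route. Apply H\"older $L^2\subset L^4_tL^{2p}_x\cdot L^4_tL^{2p'}_x$, use the radial dispersive estimate $\|e^{it\phi}P_N\|_{L^1\to L^\infty}\lesssim |t|^{-n/3}N^{\ldots}$ (the Hessian has one eigenvalue $\sim N$ radially and $n-1$ eigenvalues $\sim N$ angularly from $\phi'(r)/r\sim N$), and use Bernstein on $u_2$. I would then check that the exponents produce $N_2^{\frac n2-1+2\epsilon}N_1^{-\frac12+\epsilon}$. I expect the $\epsilon$ losses to come from endpoint Strichartz or from summing $\lambda$ with $b$ close to $\frac12$. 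Finally, I will interpolate with the trivial bound $\|u_1u_2\|_{L^2}\le T^{1/2}N_2^{n/2}\|u_1\|_{X_{0,b}}\|u_2\|_{X_{0,b}}$, obtained from Bernstein and $X_{0,b}\subset L^\infty_tL^2_x$, if needed to reach the exact exponents.
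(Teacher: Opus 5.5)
Most of your outline is sound. That includes the reduction (splitting each $u_i$ by the sign of $\tau$, then the transfer principle with $b>\frac12$) and the high--low bound $N_2^{\frac{n-1}{2}}N_1^{-1}$ from transversality. It also includes the case $n=1$, $N_1\sim N_2$, via $\|e^{\pm it\phi_{\pm}}P_Nf\|_{L^8_tL^4_x}\lesssim N^{-1/8}\|f\|_{L^2}$ and H\"older in $t$ on $[0,T]$; the finite interval is genuinely needed there, since $L^4_{xt}$ is not an admissible pair in one dimension.

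The gap is the case you yourself single out as the main one, $n\ge2$ with $N_1\sim N_2$. Nothing is proved there, and the dispersive input you propose would not work. On $|\xi|\sim N\gg1$ you correctly note that all eigenvalues of the Hessian of $\phi_{\pm}$ are $\sim N$. Stationary phase after rescaling $\xi=N\eta$ then gives $\|e^{\pm it\phi_{\pm}}P_N\|_{L^1\to L^\infty}\lesssim\min\{N^n,(N|t|)^{-n/2}\}$: Schr\"odinger-type decay $|t|^{-n/2}$ with an extra factor $N^{-n/2}$. A bound with rate $|t|^{-n/3}$ cannot carry any negative power of $N$ (test $|t|\sim N^{-3}$). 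Keel--Tao applied to it yields only pairs with $\frac3q+\frac nr=\frac n2$ and no frequency gain at all, so the exponent check you defer would fail.

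With the correct decay, Keel--Tao after rescaling $t\mapsto Nt$ gives $\|e^{\pm it\phi_{\pm}}P_Nf\|_{L^q_tL^r_x}\lesssim N^{-1/q}\|f\|_{L^2}$ for $\frac2q+\frac nr=\frac n2$. Here $q>2$ when $n=2$, and $(q,r)=(4,\infty)$ when $n=1$. This transfers to $X_{0,b}$ exactly as in your first step. Your pairing then works with $p=\frac{n}{n-1}$, giving $N^{\frac n2-\frac32}$ when $N_1\sim N_2$.

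In fact no case distinction is needed. Put $u_1$ in the near-endpoint space $\frac1q=\frac12-\epsilon$ and $u_2$ in $L^\infty_TL^{\tilde r}_x$ via Bernstein:
\[
\|u_1u_2\|_{L^2_{xT}}\le\|u_1\|_{L^q_TL^r_x}\|u_2\|_{L^{1/\epsilon}_TL^{\tilde r}_x}\lesssim N_1^{-\frac12+\epsilon}N_2^{\frac n2-1+2\epsilon}\|u_1\|_{X_{0,b}}\|u_2\|_{X_{0,b}},\qquad \frac1{\tilde r}=\frac{1-2\epsilon}{n}.
\]
This holds for every $N_2\le N_1$. For $n=1$ one simply has $\|u_1u_2\|_{L^2_{xT}}\le\|u_1\|_{L^4_TL^\infty_x}\|u_2\|_{L^4_TL^2_x}\lesssim N_1^{-1/4}\|u_1\|_{X_{0,b}}\|u_2\|_{X_{0,b}}$.

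These are exactly the exponents in the statement. The paper itself gives no argument and just cites Lemma 9 of Esfahani--Tesfahun. Your transversality computation gives a stronger bound in the high--low regime but is not needed for the lemma, and the closing interpolation with the trivial Bernstein bound is unnecessary.
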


For Lemma 2.1, we refer to \cite[Lemma 9]{ET2025}.

\begin{Lemma}\label{Lemma2.2}
Let $n\geq 1$, $0<T<1$, $0<\epsilon\ll1$, $\frac{1}{2}<b<1$, $N_{1}\geq N_{2}\geq N_{3}$, and $\supp\mathscr{F}_{x} u_{i}\subset\{\xi\in\R^{n}:|\xi|\sim N_{i}\},\,i=1,2,3$. Then:

\noindent for $n=1$, we have
\begin{eqnarray}
&&\left\|u_{1}u_{2}u_{3}\right\|_{L_{xT}^{2}}\leq CN_{1}^{-\frac{1}{4}}N_{3}^{\frac{1}{2}}\prod_{i=1}^{3}\|u_{i}\|_{X_{0,b}}
,\, N_{1}\gg 1,\label{2.03}\\
&&\left\|u_{1}u_{2}u_{3}\right\|_{L_{xT}^{2}}\leq CN_{2}^{-\frac{1}{4}}N_{1}^{-\frac{1}{4}}\prod_{i=1}^{3}\|u_{i}\|_{X_{0,b}}
,\, N_{2}\gg 1,\label{2.04}\\
&&\left\|u_{1}u_{2}u_{3}\right\|_{L_{xT}^{2}}\leq CN_{3}^{\frac{1}{3}}N_{1}^{-\frac{1}{6}}N_{2}^{-\frac{1}{6}}\prod_{i=1}^{3}\|u_{i}\|_{X_{0,b}}
,\, N_{2}\gg1,\label{2.05}
\end{eqnarray}

\noindent for $n\geq2$, $N_{1}\gg1$, we have
\begin{eqnarray}
&&\left\|u_{1}u_{2}u_{3}\right\|_{L_{xT}^{2}}\leq CN_{3}^{\frac{n}{2}}N_{2}^{\frac{n}{2}-1+2\epsilon}
N_{1}^{-\frac{1}{2}+\epsilon}\prod_{i=1}^{3}\|u_{i}\|_{X_{0,b}}.\label{2.06}
\end{eqnarray}

\end{Lemma}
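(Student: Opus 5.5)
The plan is to reduce every trilinear bound to the bilinear estimate of Lemma \ref{Lemma2.1} plus an $L^\infty_{xT}$ (Bernstein-type) bound on one factor. The basic tool is
\begin{eqnarray*}
\|P_{N}u\|_{L^\infty_{xT}}\leq C N^{\frac{n}{2}}\|P_Nu\|_{X_{0,b}},\qquad b>\tfrac12 .
\end{eqnarray*}
It follows from the embedding $X_{0,b}\hookrightarrow L^\infty_t L^2_x$ for $b>\frac12$, proved by writing $u$ as a superposition of modulated free waves $e^{\pm it\phi_\pm(\xi)}$. One then applies Bernstein's inequality $\|P_N v\|_{L^\infty_x}\leq CN^{\frac n2}\|P_Nv\|_{L^2_x}$, using that the Fourier support lies in a ball of radius $\sim N$. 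In dimension one I will also use a Strichartz-type $L^6_{xT}$ or $L^4_{xT}$ estimate for the free group. That estimate comes from the dispersive decay of $e^{it\phi_\pm(\xi)}$, since $\phi_\pm''(\xi)\sim|\xi|^{4}$ at high frequency, again transferred to $X_{0,b}$.

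\textbf{Case $n\geq2$, estimate \eqref{2.06}.} By H\"older,
\begin{eqnarray*}
\|u_1u_2u_3\|_{L^2_{xT}}\leq \|u_3\|_{L^\infty_{xT}}\|u_1u_2\|_{L^2_{xT}}.
\end{eqnarray*}
Bernstein bounds the first factor by $CN_3^{\frac n2}\|u_3\|_{X_{0,b}}$. Since $N_1\geq N_2$ and $N_1\gg1$, \eqref{2.02} bounds the second factor by $CN_2^{\frac n2-1+2\epsilon}N_1^{-\frac12+\epsilon}\|u_1\|_{X_{0,b}}\|u_2\|_{X_{0,b}}$. Multiplying the two bounds gives \eqref{2.06}.

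\textbf{Case $n=1$.} Estimate \eqref{2.03} follows in the same way from \eqref{2.01} applied to $u_1u_2$, with $u_3$ placed in $L^\infty$ at cost $N_3^{\frac12}$.

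For \eqref{2.04}, where $N_2\gg1$ (hence $N_1\gg1$), I would not use $L^\infty$. Instead I write
\begin{eqnarray*}
\|u_1u_2u_3\|_{L^2_{xT}}\leq \|u_1u_3\|_{L^4_{xT}}\|u_2\|_{L^4_{xT}}
\end{eqnarray*}
(or alternatively split into $L^4\cdot L^4$ pairs). The aim is to gain $N^{-\frac14}$ from each high-frequency factor through a one-dimensional smoothing/Strichartz estimate. Concretely, I expect a Kenig--Ponce--Vega type local smoothing or Strichartz bound with derivative gain, such as
\begin{eqnarray*}
\|P_N u\|_{L^4_{xT}}\lesssim N^{-\frac14}\|P_Nu\|_{X_{0,b}},
\end{eqnarray*}
coming from the dispersion $\phi''\sim N^4$, to give $N_1^{-\frac14}$ and $N_2^{-\frac14}$. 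The low-frequency factor $u_3$ is handled by $L^\infty_tL^2_x$ or an $L^\infty_x$ bound in $L^4$-compatible form. The precise exponent bookkeeping is the part I would check carefully.

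Estimate \eqref{2.05} should follow by interpolation between the $L^6_{xT}$ Strichartz estimate and the other bounds. The $L^6$ estimate gives a gain $N^{-\frac16}$ per factor, by the Strichartz estimate $\|D^{\frac{\alpha}{6}}\,\cdot\,\|_{L^6}$ with $\alpha$ reflecting the degree-$3$ symbol, i.e.\ $\phi''\sim N^4$. Applying it to all three factors,
\begin{eqnarray*}
\|u_1u_2u_3\|_{L^2_{xT}}\leq\prod_{i=1}^3\|u_i\|_{L^6_{xT}},
\end{eqnarray*}
yields $N_1^{-\frac16}N_2^{-\frac16}N_3^{-\frac16}$ only if $N_3\gg1$. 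When $N_3$ is small, its factor is instead bounded by Bernstein, giving at most $N_3^{\frac13}$ (via $L^6$ Sobolev from $L^2$), which matches the stated bound.

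\textbf{Main obstacle.} The main obstacle is establishing these one-dimensional Strichartz/smoothing bounds for the symbol $\phi_\pm$ with the correct powers of $N$ and transferring them to $X_{0,b}$. This requires a stationary phase (van der Corput) estimate using $|\phi_\pm''|\gtrsim N^4$ on $|\xi|\sim N\gg1$; the low-frequency region is the reason for the $\gg1$ hypotheses.
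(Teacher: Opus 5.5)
\textbf{What works.} Your proofs of \eqref{2.03} and \eqref{2.06} are correct: put $u_3$ in $L^\infty_{xT}$ at cost $N_3^{n/2}$ (using $X_{0,b}\subset L^\infty_tL^2_x$ and Bernstein), then apply Lemma~\ref{Lemma2.1} to $u_1u_2$. The paper just quotes these from Esfahani--Tesfahun.

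Your proof of \eqref{2.05} is exactly the paper's. Put $u_1,u_2$ in $L^6_{xT}$ with gain $N_i^{-1/6}$, and bound $\|u_3\|_{L^6_{xT}}\le T^{1/6}\|u_3\|_{L^\infty_TL^6_x}\le CN_3^{1/3}\|u_3\|_{L^\infty_TL^2_x}$. This works for every $N_3$, so you need neither the case split on $N_3$ nor any interpolation.

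\textbf{The gap is \eqref{2.04}.} The estimate $\|P_Nu\|_{L^4_{xT}}\lesssim N^{-1/4}\|P_Nu\|_{X_{0,b}}$ that you rely on is false. Take $\hat f$ to be an $L^2$-normalized bump of width $\delta=cN^{-1/2}$ centred at some $|\xi_0|\sim N$. Since $|\phi_\pm''|\delta^2T\lesssim c^2$ is small, $e^{it\phi_\pm(D)}f$ remains a coherent wave packet of height $\sim\delta^{1/2}$ and width $\sim\delta^{-1}$ on all of $[0,T]$. Hence $\|\eta(t)e^{it\phi_\pm(D)}f\|_{L^4_{xT}}\gtrsim(\delta T)^{1/4}\sim T^{1/4}N^{-1/8}$, while its $X_{0,b}$ norm is $O(1)$. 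So $N^{-1/8}$ is the best possible gain in $L^4_{xT}$.

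There is a second problem. Nothing in \eqref{2.04} makes $u_3$ low-frequency: $N_3$ may be comparable to $N_2$. Any $L^\infty_x$ or $L^4_x$ bound on $u_3$ via Bernstein therefore costs a power of $N_3$ that the estimate does not allow.

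What does work is to put both high-frequency factors in the admissible Strichartz space $L^4_TL^\infty_x$ and $u_3$ in the energy space:
\[
\|u_1u_2u_3\|_{L^2_{xT}}\le\|u_1\|_{L^4_TL^\infty_x}\|u_2\|_{L^4_TL^\infty_x}\|u_3\|_{L^\infty_TL^2_x},\qquad \|P_Nu\|_{L^4_TL^\infty_x}\le CN^{-\frac14}\|P_Nu\|_{X_{0,b}}\quad(N\gg1).
\]
This gives $N_1^{-1/4}N_2^{-1/4}$ uniformly in $N_3$.

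\textbf{The curvature claim is wrong.} Since $\phi_\pm(\xi)\sim|\xi|^3$ at high frequency, $|\phi_\pm''(\xi)|\sim|\xi|$, not $|\xi|^4$. On $|\xi|\sim N\gg1$ the dispersive bound is therefore $(N|t|)^{-1/2}$. The $TT^*$ argument with Hardy--Littlewood--Sobolev then gives the gain $N^{-1/q}$ in $L^q_tL^r_x$ for $\frac2q+\frac1r=\frac12$. That is $N^{-1/6}$ in $L^6_{xt}$ and $N^{-1/4}$ in $L^4_tL^\infty_x$.

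With $|\phi''|\sim N^4$ you would instead get $N^{-2/3}$ and $N^{-1}$, both false. The gains you quote are the right ones, but they do not follow from the curvature you state.
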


\noindent{\bf Proof.}
For \eqref{2.03}-\eqref{2.04} and \eqref{2.06}, we refer to \cite[Lemma 10]{ET2025}. By using \cite[Lemma 5]{ET2025}, H\"{o}lder inequality and $\dot{H}^{\frac{1}{3}}(\R)\hookrightarrow L^{6}(\R)$, we have
\begin{eqnarray}
&&\left\|u_{1}u_{2}u_{3}\right\|_{L_{xT}^{2}}\leq CT^{\frac{1}{6}}\|u_{1}\|_{L_{xT}^{6}}\|u_{2}\|_{L_{xT}^{6}}\|u_{3}\|_{L_{T}^{\infty}L_{x}^{6}}\nonumber\\
&&\leq CT^{\frac{1}{6}}N_{3}^{\frac{1}{3}}\|u_{1}\|_{L_{xT}^{6}}\|u_{2}\|_{L_{xT}^{6}}\|u_{3}\|_{L_{T}^{\infty}L_{x}^{2}}\nonumber\\
&&\leq CN_{3}^{\frac{1}{3}}N_{1}^{-\frac{1}{6}}N_{2}^{-\frac{1}{6}}\prod_{i=1}^{3}\|u_{i}\|_{X_{0,b}}.\label{2.07}
\end{eqnarray}
From \eqref{2.07},  we have that \eqref{2.05} is valid.

This completes the proof of Lemma 2.2.

\begin{Lemma}\label{Lemma2.3}
Let $n\geq 1$, $s\in\R$, $0<T<1$, $-\frac{1}{2}<b^{\prime}\leq 0\leq b\leq b^{\prime}+1$ and $(f,g)\in H^{s}(\R^{n})\times H^{s-2}(\R^{n})$. Then, we have
\begin{eqnarray}
&&\left\|\eta(t)\left(U_{1}(t)f+U_{2}(t)g_{x}\right)\right\|_{X_{s,b}}\leq C\left(\left\|f\right\|_{H^{s}}+\left\|g\right\|_{H^{s-2}}\right),\label{2.08}\\
&&\left\|\eta\left(\frac{t}{T}\right)\int_{0}^{t}U_{2}(t-s)f(u)(s)ds\right\|_{X_{s,b}}\nonumber\\
&&\leq CT^{1-(b-b^{\prime})}\left\|\mathscr{F}_{xt}^{-1}\left(\frac{\mathscr{F}_{xt}f(u)(\tau,\xi)}{2i\phi_{\pm}(\xi)}\right)\right\|_{X_{s,b^{\prime}}}.\label{2.09}
\end{eqnarray}

\end{Lemma}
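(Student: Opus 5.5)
The plan is to reduce both estimates to the standard one-dimensional (in time) linear estimates of Bourgain-space theory, by splitting each evolution into its two half-waves $e^{\pm it\phi_{\pm}(\xi)}$.

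\textbf{Free part \eqref{2.08}.} Write
\[
U_{1}(t)f+U_{2}(t)g_{x}=\sum_{\pm'}e^{\pm' it\phi_{\pm}(\xi)}\widehat{h_{\pm'}}(\xi)
\]
on the Fourier side, with
\[
\widehat{h_{\pm'}}=\frac12\Big(\hat f\pm'\frac{\widehat{g_x}}{i\phi_{\pm}(\xi)}\Big).
\]
Then
\[
\mathscr{F}_{xt}\big(\eta e^{\pm' it\phi_{\pm}}\big)=\hat\eta(\tau\mp'\phi_{\pm}(\xi))\,\widehat{h_{\pm'}}(\xi).
\]
Since $\big||\tau|-\phi_{\pm}\big|\le|\tau\mp'\phi_{\pm}|$, we get $\langle|\tau|-\phi_\pm\rangle^{b}\le\langle\tau\mp'\phi_\pm\rangle^{b}$. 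Integrating $\langle\tau\mp'\phi_\pm\rangle^{2b}|\hat\eta|^2$ in $\tau$ gives a constant, because $\hat\eta$ is Schwartz. Hence
\[
\|\eta\,e^{\pm' it\phi_\pm}h\|_{X_{s,b}}\le C\|h\|_{H^s}.
\]
It then remains to bound $\|\widehat{g_x}/\phi_\pm\|_{H^s}\le C\|g\|_{H^{s-2}}$. This uses $\phi_\pm(\xi)\gtrsim|\xi|\langle\xi\rangle^{2}$ for both signs, since $1\pm|\xi|^2+|\xi|^4\ge\frac34$. It also uses $|\widehat{g_x}|\le|\xi||\hat g|$, with $g_x$ interpreted as a first-order derivative. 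One then checks $\langle\xi\rangle^{s}|\xi|/(|\xi|\langle\xi\rangle^{2})=\langle\xi\rangle^{s-2}$, which gives the claim.

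\textbf{Duhamel part \eqref{2.09}.} Again split
\[
U_2(t-s)=\frac{1}{2i\phi_\pm}\big(e^{i(t-s)\phi_\pm}-e^{-i(t-s)\phi_\pm}\big),
\]
and set $F=\mathscr{F}^{-1}_{xt}\big(\mathscr{F}_{xt}f(u)/(2i\phi_\pm)\big)$. Each piece has the form $e^{\pm' it\phi_\pm}\int_0^t e^{\mp' is\phi_\pm}F(s)\,ds$. Conjugating by the half-wave group, i.e.\ setting $v(t)=e^{\mp' it\phi_\pm}F(t)$ frequency-wise, reduces the problem, for each fixed $\xi$, to the classical one-dimensional estimate
\[
\Big\|\eta(t/T)\int_0^t v(s)\,ds\Big\|_{H^b_t}\le CT^{1-b+b'}\|v\|_{H^{b'}_t},\qquad -\tfrac12<b'\le0\le b\le b'+1,
\]
as in Ginibre--Tsutsumi--Velo / Kenig--Ponce--Vega. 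Multiplying by $\langle\xi\rangle^{2s}$ and integrating in $\xi$ then yields
\[
\text{LHS}\le CT^{1-(b-b')}\sum_{\pm'}\|\langle\xi\rangle^{s}\langle\tau\mp'\phi_\pm\rangle^{b'}\widehat F\|_{L^2}.
\]

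\textbf{Main obstacle.} The hardest step is comparing the one-sided weights $\langle\tau\mp'\phi_\pm\rangle$ with the symmetric weight $\langle|\tau|-\phi_\pm\rangle$ used in $X_{s,b}$. On the output side the inequality goes the favourable way, as noted above. On the input side $b'\le0$, so we need
\[
\langle\tau\mp'\phi\rangle^{b'}\lesssim\langle|\tau|-\phi\rangle^{b'}.
\]
This holds in the region where $\operatorname{sign}\tau=\pm'$, where the two weights coincide. In the opposite region we have $|\tau\mp'\phi|=|\tau|+\phi\ge\big||\tau|-\phi\big|$, so the negative power is again smaller. Hence the inequality holds everywhere, and \eqref{2.09} follows. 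I would also carefully track the cutoff $\eta(t/T)$ versus $\eta(t)$ in the one-dimensional lemma; the $T$-gain comes from restricting to $|t|\lesssim T$ with $T<1$.
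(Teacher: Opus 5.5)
Your proposal is correct. The paper does not reprove this lemma and only cites \cite[Lemmas 2.1, 2.2]{EF2012}; your argument is essentially the standard one behind that citation. You split into the half-waves $e^{\pm' it\phi_{\pm}}$, apply the one-dimensional Ginibre--Tsutsumi--Velo estimate for each fixed $\xi$, and compare weights through $\langle|\tau|-\phi_{\pm}\rangle\le\langle\tau\mp'\phi_{\pm}\rangle$, which is favourable for $b\ge0$ on the output side and for $b'\le0$ on the input side.

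One small slip: the bound $1\pm|\xi|^{2}+|\xi|^{4}\ge\frac34$ alone does not give $\phi_{\pm}\gtrsim|\xi|\langle\xi\rangle^{2}$. With $r=|\xi|^{2}$, use instead $1-r+r^{2}\ge\frac14(1+r)^{2}$, which is equivalent to $3(1-r)^{2}\ge0$.
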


For Lemma 2.3, we refer to \cite[Lemmas 2.1, 2.2]{EF2012}.

\begin{Lemma}\label{Lemma2.4}
Let $n\geq 1$, $s\in\R$, $b>\frac{1}{2}$. Then, we have
\begin{eqnarray}
&&X_{s,b}(\R^{n+1})\subset C(\R; H^{s}(\R^{n})).\label{2.010}
\end{eqnarray}

\end{Lemma}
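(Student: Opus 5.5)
The plan is the standard Bourgain-space argument: show that the weight $\langle|\tau|-\phi_{\pm}(\xi)\rangle^{b}$ is integrable in $\tau$, then use Fourier inversion in time and dominated convergence.

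\emph{Step 1: reduction to $s=0$.} Given $u\in X_{s,b}$, set $F(\tau,\xi)=\langle\xi\rangle^{s}\mathscr{F}_{xt}u(\tau,\xi)$. By the definition of the norm,
\[
\|\langle|\tau|-\phi_{\pm}(\xi)\rangle^{b}F\|_{L^{2}_{\tau\xi}}=\|u\|_{X_{s,b}}.
\]
Since $\langle\xi\rangle^{s}$ is independent of $\tau$, it suffices to show that the $H^{s}$-valued map $t\mapsto u(t)$, whose spatial Fourier transform is $\langle\xi\rangle^{-s}$ times that of $v(t)=\mathscr{F}_{\tau}^{-1}F(\cdot,\xi)$, is continuous. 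This amounts to showing $t\mapsto \mathscr{F}^{-1}_{\tau}F(\cdot,\xi)(t)$ is continuous into $L^{2}_{\xi}$.

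\emph{Step 2: integrability in $\tau$ for fixed $\xi$.} By Cauchy--Schwarz in $\tau$,
\[
\int_{\SR}|F(\tau,\xi)|\,d\tau\leq \Big(\int_{\SR}\langle|\tau|-\phi_{\pm}(\xi)\rangle^{-2b}d\tau\Big)^{1/2}\Big(\int_{\SR}\langle|\tau|-\phi_{\pm}(\xi)\rangle^{2b}|F(\tau,\xi)|^{2}d\tau\Big)^{1/2}.
\]
The main point is that the first factor is bounded uniformly in $\xi$. Split into $\tau\geq0$ and $\tau<0$. On each half-line, $|\tau|-\phi_{\pm}(\xi)$ is a translate of $\pm\tau$. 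Hence
\[
\int_{\SR}\langle|\tau|-\phi_{\pm}(\xi)\rangle^{-2b}d\tau\leq 2\int_{\SR}\langle\sigma\rangle^{-2b}d\sigma=:C_{b}<\infty,
\]
and $C_b$ is finite precisely because $b>\frac12$. Writing $G(\xi)=\int_{\SR}|F(\tau,\xi)|\,d\tau$, this gives
\[
\|G\|_{L^{2}_{\xi}}\leq C_{b}^{1/2}\|u\|_{X_{s,b}}.
\]

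\emph{Step 3: continuity.} Since $F(\cdot,\xi)\in L^{1}_{\tau}$ for a.e.\ $\xi$, we may define pointwise
\[
\widehat{v}(t,\xi)=(2\pi)^{-1/2}\int_{\SR}e^{it\tau}F(\tau,\xi)\,d\tau .
\]
This coincides with the distributional inverse Fourier transform, so $u(t)$ is well defined for every $t$ and
\[
\|u(t)\|_{H^{s}}=\|\widehat{v}(t,\cdot)\|_{L^{2}_{\xi}}\leq C\|u\|_{X_{s,b}} .
\]
For $t\to t_{0}$, the integrand $e^{it\tau}F(\tau,\xi)$ converges pointwise and is dominated by $|F(\tau,\xi)|$. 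So $\widehat{v}(t,\xi)\to\widehat{v}(t_{0},\xi)$ for a.e.\ $\xi$, with
\[
|\widehat{v}(t,\xi)-\widehat{v}(t_{0},\xi)|\leq 2G(\xi)\in L^{2}_{\xi}.
\]
A second application of dominated convergence, now in $L^{2}_{\xi}$, gives $\|u(t)-u(t_{0})\|_{H^{s}}\to0$.

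The only place where the specific dispersion relation matters is the uniform bound on $\int\langle|\tau|-\phi_{\pm}(\xi)\rangle^{-2b}d\tau$, and it is handled by the half-line translation in Step 2. Since $\phi_{\pm}(\xi)\geq0$ is real, no further structure of $\phi_{\pm}$ is needed; the remaining steps are routine.
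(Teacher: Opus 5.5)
Your proof is correct, but it is organized differently from the paper's.

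\textbf{The paper's route.} The paper first splits $u=u_{1}+u_{2}$ according to $\tau\le 0$ and $\tau>0$. It then conjugates each piece by the free group ($v=U(t)u_{1}$), so that the weight $\langle|\tau|-\phi_{\pm}(\xi)\rangle$ becomes the $\xi$-independent weight $\langle\tau\rangle$. Next it estimates $\|v(t_{1})-v(t_{2})\|_{H^{s}}$ by Minkowski's integral inequality, i.e.\ by $\int|e^{it_{1}\tau}-e^{it_{2}\tau}|\,\|\langle\xi\rangle^{s}\mathscr{F}_{xt}v(\cdot,\tau)\|_{L^{2}_{\xi}}\,d\tau$. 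Finally it splits explicitly at $|\tau|=M$: a Lipschitz bound on the exponentials for $|\tau|\le M$, and a Cauchy--Schwarz tail using $b>\frac12$ for $|\tau|\ge M$. In effect the paper works in $L^{1}_{\tau}L^{2}_{\xi}$, which is why it needs a weight independent of $\xi$, and hence the splitting and conjugation.

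\textbf{Your route.} You apply Cauchy--Schwarz in $\tau$ for each fixed $\xi$, i.e.\ you work in $L^{2}_{\xi}L^{1}_{\tau}$. There the $\xi$-dependence of the weight is harmless, because $\int\langle|\tau|-\phi_{\pm}(\xi)\rangle^{-2b}d\tau$ is bounded uniformly in $\xi$. Dominated convergence, used twice, replaces the $\epsilon$--$M$--$\delta$ bookkeeping.

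\textbf{What each buys.} Your argument is shorter. It also produces the bound $\sup_{t}\|u(t)\|_{H^{s}}\le C\|u\|_{X_{s,b}}$ in the same stroke, whereas the paper cites it from Farah. It gives continuity of $t\mapsto u(t)$ in $H^{s}$ directly. The paper's write-up only tracks continuity of $t\mapsto\|u(t)\|_{H^{s}}$. To get the actual statement there, one must combine its bound on $\|v(t_{1})-v(t_{2})\|_{H^{s}}$ with the strong continuity of $U(-t)$ on $H^{s}$ to recover $u_{1}=U(-t)v$. In return, the paper's version is slightly more quantitative: it bounds $\|v(t_{1})-v(t_{2})\|_{H^{s}}$ by $C_{M}|t_{1}-t_{2}|\,\|u\|_{X_{s,b}}$ plus a tail that is small for large $M$.

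The one step you state without proof is that the pointwise $\tau$-integral agrees with the distributional inverse Fourier transform. This is routine: pair with a Schwartz function and use Fubini, which is justified since $G\in L^{2}_{\xi}$.
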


\noindent {\bf Proof.}
By using a proof similar to that of \cite[Lemma 3]{F2009}, we obtain
\begin{eqnarray}
&&\|u\|_{H^{s}}\leq C\|u\|_{X_{s,b}}.\label{2.010}
\end{eqnarray}
Next, we prove the continuity of $\|u\|_{H^{s}}$ with respect to $t$. Noting that
\begin{eqnarray}
&&u=u_{1}+u_{2},\label{2.011}
\end{eqnarray}
where
\begin{eqnarray*}
&&u_{1}=\mathscr{F}_{t}^{-1}(I_{\{\tau\in\SR:\tau\leq 0\}}\mathscr{F}_{t}u(x,\tau)),\,u_{2}=\mathscr{F}_{t}^{-1}(I_{\{\tau\in\SR:\tau> 0\}}\mathscr{F}_{t}u(x,\tau)).
\end{eqnarray*}
Obviously, we have
\begin{eqnarray}
\|u_{1}\|_{X_{s,b}}=\|U(t)u_{1}\|_{H_{x}^{s}H_{t}^{b}}\leq C\|u\|_{X_{s,b}},\,\|u_{2}\|_{X_{s,b}}=\|U(-t)u_{2}\|_{H_{x}^{s}H_{t}^{b}}\leq C\|u\|_{X_{s,b}},\label{2.012}
\end{eqnarray}
where
\begin{eqnarray*}
&&U(t)f=\int_{\SR^{n}}e^{ix\cdot\xi}e^{it\phi_{\pm}(\xi)}\mathscr{F}_{x}f(\xi)d\xi.
\end{eqnarray*}
By using \eqref{2.011}, in order to prove the continuity of $\|u\|_{H^{s}}$ with respect to $t$, it suffices to prove the continuity of
$\|u_{1}\|_{H^{s}}$ and $\|u_{2}\|_{H^{s}}$ in $t$. Moreover, since the proof of the continuity of $\|u_{2}\|_{H^{s}}$ in $t$ is similar to that of
$\|u_{1}\|_{H^{s}}$, we only need to prove the continuity of $\|u_{1}\|_{H^{s}}$ with respect to $t$.

\noindent We denote
\begin{eqnarray*}
&&v(t):=U(t)u_{1}.
\end{eqnarray*}
Then it suffices to prove that for any
$t_{1}, t_{2}\in\R$ and $\epsilon>0$, there exists $\delta(\epsilon)>0$ such that whenever $|t_{1}-t_{2}|<\delta(\epsilon)$, we have
\begin{eqnarray}
&&\Big|\left\|U(-t_{1})v(t_{1})\right\|_{H^{s}}-\left\|U(-t_{2})v(t_{2})\right\|_{H^{s}}\Big|<\epsilon.\label{2.013}
\end{eqnarray}
Now, we prove \eqref{2.013}. From \eqref{2.012}, by using the H\"older inequality, for $M\geq 2026$(fixed),  we have that
\begin{eqnarray}
&&\int_{|\tau|\geq M}\left(\int_{\SR^{n}}\langle\xi\rangle^{2s}|\mathscr{F}_{xt}v(\tau,\xi)|^{2}d\xi\right)^{1/2}d\tau\nonumber\\
&&\leq \left[\int_{|\tau|\geq M}\langle\tau\rangle^{2b}\int_{\SR^{n}}\langle\xi\rangle^{2s}|\mathscr{F}_{xt}v(\tau,\xi)|^{2}d\xi d\tau\right]^{1/2}
\times \left[\int_{|\tau|\geq M}\langle\tau\rangle^{-2b}d\tau\right]^{1/2}\nonumber\\
&&\leq C\epsilon.\label{2.014}
\end{eqnarray}
For $M\geq 2026$(fixed), by using Minkowski's inequality, we have
\begin{eqnarray}
&&\Big|\|U(-t_{1})v(t_{1})\|_{H^{s}}-\|U(-t_{2})v(t_{2})\|_{H^{s}}\Big|=\Big|\|v(t_{1})\|_{H^{s}}-\|v(t_{2})\|_{H^{s}}\Big|\nonumber\\
&&\leq \|v(t_{1})-v(t_{2})\|_{H^{s}}=\left\|\int_{\SR}
\left(e^{it_{1}\tau}-e^{it_{2}\tau}\right)
\mathscr{F}_{t}v(x,\tau)d\tau\right\|_{H^{s}}\nonumber\\
&&=\left(\int_{\SR^{n}}\langle\xi\rangle^{2s}\left|\int_{\SR}
\left(e^{it_{1}\tau}-e^{it_{2}\tau}\right)
\mathscr{F}_{xt}v(\xi,\tau)d\tau\right|^{2}d\xi\right)^{\frac{1}{2}}\nonumber\\
&&\leq \left(\int_{\SR^{n}}\langle\xi\rangle^{2s}\left(\int_{\SR}
\left|\left(e^{it_{1}\tau}-e^{it_{2}\tau}\right)
\mathscr{F}_{xt}v(\xi,\tau)\right|d\tau\right)^{2}d\xi\right)^{\frac{1}{2}}\nonumber\\
&&\leq \int_{\SR}\left|e^{it_{1}\tau}-e^{it_{2}\tau}\right|
\left(\int_{\SR^{n}}\langle\xi\rangle^{2s}
\left|\mathscr{F}_{xt}v(\xi,\tau)\right|^{2}d\xi\right)^{\frac{1}{2}} d\tau\nonumber\\
&&=\int_{|\tau|\leq M}\left|e^{it_{1}\tau}-e^{it_{2}\tau}\right|
\left(\int_{\SR^{n}}\langle\xi\rangle^{2s}
\left|\mathscr{F}_{xt}v(\xi,\tau)\right|^{2}d\xi\right)^{\frac{1}{2}} d\tau\nonumber\\
&&\quad+\int_{|\tau|\geq M}\left|e^{it_{1}\tau}-e^{it_{2}\tau}\right|
\left(\int_{\SR^{n}}\langle\xi\rangle^{2s}
\left|\mathscr{F}_{xt}v(\xi,\tau)\right|^{2}d\xi\right)^{\frac{1}{2}} d\tau\nonumber\\
&&=I_{1}+I_{2},\label{2.015}
\end{eqnarray}
where
\begin{eqnarray*}
&&I_{1}=\int_{|\tau|\leq M}\left|e^{it_{1}\tau}-e^{it_{2}\tau}\right|
\left(\int_{\SR^{n}}\langle\xi\rangle^{2s}
\left|\mathscr{F}_{xt}v(\xi,\tau)\right|^{2}d\xi\right)^{\frac{1}{2}} d\tau,\\
&&I_{2}=\int_{|\tau|\geq M}\left|e^{it_{1}\tau}-e^{it_{2}\tau}\right|
\left(\int_{\SR^{n}}\langle\xi\rangle^{2s}
\left|\mathscr{F}_{xt}v(\xi,\tau)\right|^{2}d\xi\right)^{\frac{1}{2}} d\tau.
\end{eqnarray*}
For $I_{1}$, by using \eqref{2.012}, we have
\begin{eqnarray}
&&I_{1}=\int_{|\tau|\leq M}\left|e^{it_{1}\tau}-e^{it_{2}\tau}\right|\left(\int_{\SR^{n}}\langle\xi\rangle^{2s}
\left|\mathscr{F}_{xt}v(\xi,\tau)\right|^{2}d\xi\right)^{\frac{1}{2}} d\tau\nonumber\\
&&\leq M|t_{1}-t_{2}|\int_{|\tau|\leq M}\left(\int_{\SR^{n}}\langle\xi\rangle^{2s}
\left|\mathscr{F}_{xt}v(\xi,\tau)\right|^{2}d\xi\right)^{\frac{1}{2}} d\tau\nonumber\\
&&\leq M|t_{1}-t_{2}|\left\|J_{x}^{s}J_{t}^{b}v\right\|_{L_{xt}^{2}}\nonumber\\
&&\leq CM|t_{1}-t_{2}|.\label{2.016}
\end{eqnarray}
For $I_{2}$, from \eqref{2.014}, we have
\begin{eqnarray}
&&I_{2}=\int_{|\tau|\geq M}\left|e^{it_{1}\tau}-e^{it_{2}\tau}\right|\left(\int_{\SR^{n}}\langle\xi\rangle^{2s}
\left|\mathscr{F}_{xt}v(\xi,\tau)\right|^{2}d\xi\right)^{\frac{1}{2}} d\tau.\nonumber\\
&&\leq 4\int_{|\tau|\geq M}\left(\int_{\SR^{n}}\langle\xi\rangle^{2s}
\left|\mathscr{F}_{xt}v(\xi,\tau)\right|^{2}d\xi\right)^{\frac{1}{2}} d\tau
\leq C\epsilon.\label{2.017}
\end{eqnarray}
From \eqref{2.016}-\eqref{2.017}, we have
\begin{eqnarray}
&&I_{2}\leq C(M|t_{1}-t_{2}|+\epsilon).\label{2.018}
\end{eqnarray}
When  $|t_{1}-t_{2}|<\delta<\frac{\epsilon}{M}$, then, we have that \eqref{2.013} is valid.

This completes the proof of Lemma 2.4.

\bigskip
\section{Bilinear and trilinear estimates}\label{sec3}

\setcounter{equation}{0}

\setcounter{Theorem}{0}

\setcounter{Lemma}{0}

\setcounter{section}{3}

In this section, we establish some bilinear and trilinear estimates that play a crucial role in the proofs of Theorems 1.1-1.6.

\begin{Lemma}\label{Lemma3.1}
Let $n=1$, $0<\epsilon\ll1$, $0<T<1$, $s\geq-\frac{1}{8}+\frac{\epsilon}{2}$, $b_{0}=-\frac{1}{2}+\frac{\epsilon}{12}$, $b=\frac{1}{2}+\frac{\epsilon}{24}$ and
\begin{eqnarray*}
&&0\leq a\leq \min\left\{1-2\epsilon, s+\frac{5}{4}-\epsilon\right\}=1-2\epsilon.
\end{eqnarray*}
Then, we have
\begin{eqnarray}
&&\left\|\eta\left(\frac{t}{T}\right)\mathscr{F}_{xt}^{-1}\left(\frac{|\xi|^{2}\mathscr{F}_{xt}(u^{2})}{2i\phi_{\pm}(\xi)}\right)\right\|_{X_{s+a,b_{0}}}\leq CT^{-b_{0}}\|u\|_{X_{s,b}}^{2}.\label{3.01}
\end{eqnarray}
\end{Lemma}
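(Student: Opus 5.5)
By duality I will reduce \eqref{3.01} to a trilinear integral bound. Since $\frac{|\xi|^{2}}{\phi_{\pm}(\xi)}\sim \frac{|\xi|}{\langle\xi\rangle^{2}}\lesssim \langle\xi\rangle^{-1}$, the left-hand side of \eqref{3.01} is controlled by
$$\sup_{\|w\|_{X_{0,-b_{0}}}\leq 1}\left|\int_{\SR^{2}}\langle\xi\rangle^{s+a-1}\,\mathscr{F}_{xt}\Big(\eta\big(\tfrac{t}{T}\big)u^{2}\Big)\,\overline{\mathscr{F}_{xt}w}\,d\xi d\tau\right|.$$
I will first move the cutoff $\eta(t/T)$ onto one factor, or onto the dual function. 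This costs the factor $T^{-b_{0}}$, via the standard estimate $\|\eta(t/T)v\|_{X_{0,b_{0}}}\lesssim T^{-b_{0}}$-type bounds for negative $b_{0}$. The problem then becomes
$$\Big|\int u_{1}u_{2}\,w\,dxdt\Big|\lesssim \|u_{1}\|_{X_{s,b}}\|u_{2}\|_{X_{s,b}}\|w\|_{X_{-(s+a-1),-b_{0}}},$$
with all functions localized to $|t|\lesssim T$. Next I will take dyadic Littlewood--Paley pieces $P_{N_{1}}u_{1}$, $P_{N_{2}}u_{2}$, $P_{N}w$, with $N_{1}\geq N_{2}$ by symmetry.

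The case split follows the usual high-low interactions.

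\emph{Case 1: low frequencies,} $N_{1}\lesssim 1$. Then $N\lesssim1$ as well, all weights are harmless, and Hölder together with $X_{0,b}\subset L^{\infty}_{t}L^{2}_{x}$ (Lemma \ref{Lemma2.4}) and Bernstein closes the estimate.

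\emph{Case 2: high-high to low,} $N_{1}\sim N_{2}\gg1$, $N\lesssim N_{1}$. I will apply \eqref{2.01} to the product $P_{N_{1}}u_{1}\cdot P_{N}w$, which gains $N_{1}^{-1/4}$. Here the dual weight $-b_{0}=\frac12-\frac{\epsilon}{12}<\frac12$ is a problem, since \eqref{2.01} requires $b>\frac12$. I will handle this by interpolating \eqref{2.01} with the trivial bound $\|u_{1}w\|_{L^{2}_{xT}}\lesssim N^{1/2}\|u_{1}\|_{X_{0,1/2+}}\|w\|_{X_{0,0}}$. Interpolating at a small exponent loses only $N_{1}^{O(\epsilon)}$ of the gain. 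The count is then $\langle N\rangle^{s+a-1}N_{1}^{-2s}N_{1}^{-1/4+O(\epsilon)}$. When $s+a-1\leq0$ the worst case is $N\sim1$, and this needs $-2s-\frac14<0$, i.e. $s>-\frac18$. That is exactly where $s\geq-\frac18+\frac{\epsilon}{2}$ enters.

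\emph{Case 3: high-low to high,} $N_{1}\gg N_{2}$, $N\sim N_{1}$. The weight is $N_{1}^{s+a-1}N_{1}^{-s}\langle N_{2}\rangle^{-s}=N_{1}^{a-1}\langle N_{2}\rangle^{-s}$. Using \eqref{2.01} on $u_{1}u_{2}$ in $L^{2}$ and pairing with $w$ in $L^{2}$ gives a gain of $N_{1}^{-1/4}$. The dual function $w$ only needs $b_{0}\geq0$ for $L^{2}$ pairing, and $-b_{0}>0$, so this is fine. The condition $a\leq1-2\epsilon$ gives $N_{1}^{a-1-1/4}\langle N_{2}\rangle^{1/8}\lesssim N_{1}^{-2\epsilon-1/8}$, which is summable. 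Summation over dyadic blocks uses the $N^{-\epsilon}$ margins plus Cauchy--Schwarz over $N\sim N_{1}$.

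The main obstacle is Case 2 with the dual function at modulation weight $-b_{0}<\frac12$, since \eqref{2.01} is stated only for $b>\frac12$. The small $\epsilon$-losses from interpolation must be balanced against the margin $s+\frac18\geq\frac{\epsilon}{2}$. This balancing is why $b_{0},b$ are tuned as $\pm\frac12$ plus multiples of $\epsilon/12$. If the interpolation is too lossy, I will instead split by which of $\tau,\tau_{1},\tau_{2}$ carries maximal modulation. When $w$ has the largest modulation, I will use the resonance relation, lower bounded roughly by $N_{1}^{2}N$ for this dispersion. When $u_{1}$ or $u_{2}$ has it, I will place that factor in $L^{2}$ and apply \eqref{2.01} to the other two, both of which then have $b>\frac12$.
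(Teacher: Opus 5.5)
\textbf{There is a genuine gap, in your reduction step and in your Case 2, which is built on it.} Your architecture (duality, Littlewood--Paley pieces, the high--low case split, and \eqref{2.01}) is the paper's, but you cannot both extract $T^{-b_{0}}$ and keep the dual function in $X_{-(s+a-1),-b_{0}}$.

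Note that $T^{-b_{0}}=T^{\frac12-\frac{\epsilon}{12}}<1$ is a gain, not a cost. It comes from Tao's time-localization lemma, which the paper uses: $\|\eta(t/T)G\|_{X_{s+a,b_{0}}}\lesssim T^{-b_{0}}\|G\|_{X_{s+a,0}}$. Dually, $\|\eta(t/T)w\|_{X_{0,0}}\lesssim T^{-b_{0}}\|w\|_{X_{0,-b_{0}}}$. So the gain is paid for with all of the dual function's modulation regularity: afterwards $w$ lies only in $L^{2}_{xT}$. Keeping $w\in X_{0,\sigma}$ yields only $T^{-b_{0}-\sigma}$. Your reduced problem, with $w\in X_{-(s+a-1),-b_{0}}$, therefore proves \eqref{3.01} with essentially no power of $T$.

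Conversely, once $T^{-b_{0}}$ is extracted correctly, your Case 2 fails. Applying \eqref{2.01} to $P_{N_{1}}u_{1}\cdot P_{N}w$, even after interpolation, needs $w$ at modulation weight close to $\frac12$. With $w\in L^{2}_{xT}$, interpolation only returns the endpoint bound $N^{1/2}$, which has no decay in $N_{1}$ and cannot beat $N_{1}^{-2s}$ for $s<0$. Your resonance fallback fails in the subcase where $w$ carries the maximal modulation, since an unweighted $w$ gains nothing from it.

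\textbf{The fix is already in your Case 3.} Estimate \eqref{2.01} requires only $N_{1}\geq N_{2}$ and $N_{1}\gg1$, so it applies equally when $N_{1}\sim N_{2}$. In Case 2, bound
\[
\|P_{N_{1}}u_{1}P_{N_{2}}u_{2}\|_{L^{2}_{xT}}\lesssim N_{1}^{-1/4}\|P_{N_{1}}u_{1}\|_{X_{0,b}}\|P_{N_{2}}u_{2}\|_{X_{0,b}}
\]
and pair with $P_{N}h$ in $L^{2}_{xT}$. Your count $\langle N\rangle^{s+a-1}N_{1}^{-2s-\frac14}$ then closes. When $s+a\leq1$ it is $\lesssim N_{1}^{-\epsilon}$ because $s\geq-\frac18+\frac{\epsilon}{2}$. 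When $s+a>1$ it is $\lesssim N_{1}^{a-s-\frac54}$, which has negative exponent because $a\leq1-2\epsilon$. This is exactly the paper's proof (its Cases 3--4). It only ever uses the dual function in $L^{2}_{xT}$, which is why it is compatible with $T^{-b_{0}}$. The ``main obstacle'' you identify is an artifact of putting the dual function into the bilinear estimate, and no interpolation or resonance analysis is needed.

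\textbf{A smaller point on dyadic summation.} You replace $|\xi|^{2}/\phi_{\pm}(\xi)$ by $\langle\xi\rangle^{-1}$, so you lose the factor $N$ the paper uses to sum over dyadic $N\ll1$. Also, in your Case 3 the bound from \eqref{2.01} has no decay as $N_{2}\to0$. To make these sums converge, either group all frequencies $\lesssim1$ into one block, or handle the low-frequency factor with Bernstein (gaining $N_{2}^{1/2}$), as in the paper's Case 2A.
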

\noindent {\bf Proof.}
Inspired by \cite{ET2025}, we prove Lemma 3.1. Noting that
\begin{eqnarray}
&&|\phi_{\pm}(\xi)|\sim |\xi|\langle \xi\rangle^{2}.\label{3.02}
\end{eqnarray}
From \eqref{3.02}, to prove \eqref{3.01}, it suffices to show that
\begin{eqnarray}
&&\left\|\eta\left(\frac{t}{T}\right)|D|\langle D\rangle^{-2}(u^{2})\right\|_{X_{s+a,b_{0}}}\leq CT^{-b_{0}}\|u\|_{X_{s,b}}^{2}.\label{3.03}
\end{eqnarray}
By using \cite[Lemma 2.11]{T2006}, we have
\begin{eqnarray}
&&\left\|\eta\left(\frac{t}{T}\right)|D|\langle D\rangle^{-2}(u^{2})\right\|_{X_{s+a,b_{0}}}\leq CT^{-b_{0}}
\left\|\eta\left(\frac{t}{T}\right)|D|\langle D\rangle^{-2}(u^{2})\right\|_{X_{s+a,0}}\nonumber\\
&&=CT^{-b_{0}}\left\||D|\langle D\rangle^{-2}(u^{2})\right\|_{L_{T}^{2}H_{x}^{s+a}}.\label{3.04}
\end{eqnarray}
By using \eqref{3.04}, to prove \eqref{3.01}, it suffices to show that
\begin{eqnarray}
&&\left\||D|\langle D\rangle^{s+a-2}(u^{2})\right\|_{L_{xT}^{2}}\leq C\|u\|_{X_{s,b}}^{2}.\label{3.05}
\end{eqnarray}
According to the duality idea, to prove \eqref{3.05}, we only need to prove
\begin{eqnarray}
&&\left|\int_{0}^{T}\int_{\SR}|D|\langle D\rangle^{s+a-2}\left(\langle D\rangle^{-s} u_{1}\langle D\rangle^{-s} u_{2}\right)(x,t)\overline{h}(x,t)dxdt\right|\nonumber\\
&&\leq C\|h\|_{L_{xT}^{2}}\prod_{j=1}^{2}\|u_{j}\|_{X_{0,b}}.\label{3.06}
\end{eqnarray}
We denote
\begin{eqnarray}
&&I_{1}:=\int_{0}^{T}\int_{\SR}|D|\langle D\rangle^{s+a-2}\left(\langle D\rangle^{-s} u_{1}\langle D\rangle^{-s} u_{2}\right)(x,t)\overline{h}(x,t)dxdt.\label{3.07}
\end{eqnarray}
Let $P_{N}h$ and $P_{N_{j}}u_{j}(j=1,2)$ be the dyadic decompositions of $h$ and $u_{j}$, respectively.
Then, we have that $\supp \mathscr{F}_{x}P_{N}h\subset\{\xi\in\R:|\xi|\sim N\}$, and $\supp \mathscr{F}_{x}P_{N_{j}}u_{j}\subset \{\xi\in\R:|\xi|\sim N_{j}\}$, where $N$ and $N_{j}$ are dyadic numbers. We define $\sum=\sum\limits_{N_{1}, N_{2}, N}$. Without loss of generality, we assume $N_{1}\geq N_{2}$.

\noindent We consider the following cases: {\bf Case 1}: $N_{1}\leq 10$; {\bf Case
2}: $N_{1}\geq 10$, $N_{1}\gg N_{2}$ and {\bf Case 3}: $N_{1}\geq 10$, $N_{1}\sim N_{2}\gg N$; {\bf Case 4}: $N_{1}\geq 10$, $N_{1}\sim N_{2}\sim N$.

\noindent {\bf Case 1}: When $N_{1}\leq 10$, by using H\"{o}lder inequality and Bernstein inequality, we have
\begin{eqnarray}
&&|I_{1}|\leq C\sum N\langle N\rangle^{s+a-2}\langle N_{1}\rangle^{-s}\langle N_{2}\rangle^{-s}
\|P_{N_{1}}u_{1}\|_{L_{xT}^{2}}\|P_{N_{2}}u_{2}\|_{L_{xT}^{\infty}}\|P_{N}h\|_{L_{xT}^{2}}\nonumber\\
&&\leq C\sum NN_{2}^{\frac{1}{2}}\langle N_{1}\rangle^{-s}\langle N_{2}\rangle^{-s}
\|P_{N_{1}}u_{1}\|_{X_{0,b}}\|P_{N_{2}}u_{2}\|_{X_{0,b}}\|P_{N}h\|_{L_{xT}^{2}}\nonumber\\
&&\leq C\sum N N_{1}^{\frac{1}{4}}N_{2}^{\frac{1}{4}}\|P_{N_{1}}u_{1}\|_{X_{0,b}}\|P_{N_{2}}u_{2}\|_{X_{0,b}}\|P_{N}h\|_{L_{xT}^{2}}
\leq C\|h\|_{L_{xT}^{2}}\prod_{j=1}^{2}\|u_{j}\|_{X_{0,b}}.\label{3.08}
\end{eqnarray}
{\bf Case 2}: When $N_{1}\geq 10$,  $N_{1}\gg N_{2}$, we consider the following cases: {\bf Case 2A}: $N_{2}\leq 8$ and {\bf Case
2B}: $N_{2}\geq 8$.

\noindent {\bf Case 2A}: $N_{1}\geq 10$,  $N_{1}\gg N_{2}$, $N_{2}\leq 8$, we have $N_{1}\sim N$.
By using H\"{o}lder inequality and Bernstein inequality, $0\leq a\leq 1-2\epsilon$, we have
\begin{eqnarray}
&&|I_{1}|\leq C\sum NN^{s+a-2}N_{1}^{-s}\langle N_{2}\rangle^{-s}
\|P_{N_{1}}u_{1}\|_{L_{xT}^{2}}\|P_{N_{2}}u_{2}\|_{L_{xT}^{\infty}}\|P_{N}h\|_{L_{xT}^{2}}\nonumber\\
&&\leq C\sum N^{a-1}N_{2}^{\frac{1}{2}}\langle N_{2}\rangle^{-s}
\|P_{N_{1}}u_{1}\|_{X_{0,b}}\|P_{N_{2}}u_{2}\|_{X_{0,b}}\|P_{N}h\|_{L_{xT}^{2}}\nonumber\\
&&\leq C\sum N^{a-1}N_{2}^{\frac{1}{2}}\|P_{N_{1}}u_{1}\|_{X_{0,b}}\|P_{N_{2}}u_{2}\|_{X_{0,b}}\|P_{N}h\|_{L_{xT}^{2}}
\leq C\|h\|_{L_{xT}^{2}}\prod_{j=1}^{2}\|u_{j}\|_{X_{0,b}}.\label{3.09}
\end{eqnarray}
{\bf Case 2B}: $N_{1}\geq 10$,  $N_{1}\gg N_{2}$, $N_{2}\geq 8$, we have $N_{1}\sim N$.
By using H\"{o}lder inequality and \eqref{2.01}, $s\geq-\frac{1}{8}+\frac{\epsilon}{2}$, $0\leq a\leq \min\{1-2\epsilon, s+\frac{5}{4}-\epsilon\}$, we have
\begin{eqnarray}
&&|I_{1}|\leq C\sum NN^{s+a-2}N_{1}^{-s}N_{2}^{-s}
\|P_{N_{1}}u_{1}P_{N_{2}}u_{2}\|_{L_{xT}^{2}}\|P_{N}h\|_{L_{xT}^{2}}\nonumber\\
&&\leq C\sum N_{1}^{-\epsilon}N_{1}^{a-1+\epsilon}N_{1}^{-\frac{1}{4}}N_{2}^{-s}
\|P_{N_{1}}u_{1}\|_{X_{0,b}}\|P_{N_{2}}u_{2}\|_{X_{0,b}}\|P_{N}h\|_{L_{xT}^{2}}\nonumber\\
&&\leq C\sum N_{1}^{-\epsilon} N_{2}^{-s+a-\frac{5}{4}+\epsilon}\|P_{N_{1}}u_{1}\|_{X_{0,b}}\|P_{N_{2}}u_{2}\|_{X_{0,b}}\|P_{N}h\|_{L_{xT}^{2}}\nonumber\\
&&\leq C\|h\|_{L_{xT}^{2}}\prod_{j=1}^{2}\|u_{j}\|_{X_{0,b}}.\label{3.010}
\end{eqnarray}
{\bf Case 3}: $N_{1}\geq 10$, $N_{1}\sim N_{2}\gg N$, we consider the following cases: {\bf Case 3A}: $N\leq 8$ and {\bf Case
3B}: $N\geq 8$.

\noindent {\bf Case 3A}: $N_{1}\geq 10$, $N_{1}\sim N_{2}\gg N$, $N\leq 8$.
By using H\"{o}lder inequality and \eqref{2.01}, $s\geq -\frac{1}{8}+\frac{\epsilon}{2}$, we have
\begin{eqnarray}
&&|I_{1}|\leq C\sum N\langle N\rangle^{s+a-2}N_{1}^{-s}N_{2}^{-s}
\|P_{N_{1}}u_{1}P_{N_{2}}u_{2}\|_{L_{xT}^{2}}\|P_{N}h\|_{L_{xT}^{2}}\nonumber\\
&&\leq C\sum NN_{1}^{-\epsilon}N_{1}^{-\frac{1}{4}-2s+\epsilon}
\|P_{N_{1}}u_{1}\|_{X_{0,b}}\|P_{N_{2}}u_{2}\|_{X_{0,b}}\|P_{N}h\|_{L_{xT}^{2}}\nonumber\\
&&\leq C\sum NN_{1}^{-\epsilon}\|P_{N_{1}}u_{1}\|_{X_{0,b}}\|P_{N_{2}}u_{2}\|_{X_{0,b}}\|P_{N}h\|_{L_{xT}^{2}}
\leq C\|h\|_{L_{xT}^{2}}\prod_{j=1}^{2}\|u_{j}\|_{X_{0,b}}.\label{3.011}
\end{eqnarray}
{\bf Case 3B}: $N_{1}\geq 10$, $N_{1}\sim N_{2}\gg N$, $N\geq 8$, we consider the following cases: {\bf Case 3$B_{1}$}: $0\leq a\leq 1-s$, $-\frac{1}{8}+\frac{\epsilon}{2}\leq s\leq 1$ and {\bf Case
3$B_{2}$}: $a\geq 1-s$.

\noindent {\bf Case 3$B_{1}$}: When $0\leq a\leq 1-s$, $-\frac{1}{8}+\frac{\epsilon}{2}\leq s\leq 1$,
by using H\"{o}lder inequality and \eqref{2.01}, we have
\begin{eqnarray}
&&|I_{1}|\leq C\sum NN^{s+a-2}N_{1}^{-s}N_{2}^{-s}
\|P_{N_{1}}u_{1}P_{N_{2}}u_{2}\|_{L_{xT}^{2}}\|P_{N}h\|_{L_{xT}^{2}}\nonumber\\
&&\leq C\sum N_{1}^{-\epsilon}N^{s+a-1}N_{1}^{-2s-\frac{1}{4}+\epsilon}
\|P_{N_{1}}u_{1}\|_{X_{0,b}}\|P_{N_{2}}u_{2}\|_{X_{0,b}}\|P_{N}h\|_{L_{xT}^{2}}\nonumber\\
&&\leq C\sum N_{1}^{-\epsilon}\|P_{N_{1}}u_{1}\|_{X_{0,b}}\|P_{N_{2}}u_{2}\|_{X_{0,b}}\|P_{N}h\|_{L_{xT}^{2}}\nonumber\\
&&\leq C\|h\|_{L_{xT}^{2}}\prod_{j=1}^{2}\|u_{j}\|_{X_{0,b}}.\label{3.012}
\end{eqnarray}
{\bf Case 3$B_{2}$}: When $s\geq-\frac{1}{8}+\frac{\epsilon}{2}$, $1-s\leq a\leq s+\frac{5}{4}-\epsilon$,
by using H\"{o}lder inequality and \eqref{2.01}, we have
\begin{eqnarray}
&&|I_{1}|\leq C\sum NN^{s+a-2}N_{1}^{-s}N_{2}^{-s}
\|P_{N_{1}}u_{1}P_{N_{2}}u_{2}\|_{L_{xT}^{2}}\|P_{N}h\|_{L_{xT}^{2}}\nonumber\\
&&\leq C\sum N_{1}^{-\epsilon}N^{s+a-1}N_{1}^{-2s-\frac{1}{4}+\epsilon}
\|P_{N_{1}}u_{1}\|_{X_{0,b}}\|P_{N_{2}}u_{2}\|_{X_{0,b}}\|P_{N}h\|_{L_{xT}^{2}}\nonumber\\
&&\leq C\sum N_{1}^{-\epsilon}N_{1}^{a-s-1-\frac{1}{4}+\epsilon}\|P_{N_{1}}u_{1}\|_{X_{0,b}}\|P_{N_{2}}u_{2}\|_{X_{0,b}}\|P_{N}h\|_{L_{xT}^{2}}\nonumber\\
&&\leq C\sum N_{1}^{-\epsilon}\|P_{N_{1}}u_{1}\|_{X_{0,b}}\|P_{N_{2}}u_{2}\|_{X_{0,b}}\|P_{N}h\|_{L_{xT}^{2}}\nonumber\\
&&\leq C\|h\|_{L_{xT}^{2}}\prod_{j=1}^{2}\|u_{j}\|_{X_{0,b}}.\label{3.013}
\end{eqnarray}
{\bf Case 4}: $N_{1}\geq 10$, $N_{1}\sim N_{2}\sim N$, by using H\"{o}lder inequality and \eqref{2.01}, $s\geq-\frac{1}{8}+\frac{\epsilon}{2}$, $0\leq a\leq s+\frac{5}{4}-\epsilon$, we have
\begin{eqnarray}
&&|I_{1}|\leq C\sum NN^{s+a-2}N_{1}^{-s}N_{2}^{-s}
\|P_{N_{1}}u_{1}P_{N_{2}}u_{2}\|_{L_{xT}^{2}}\|P_{N}h\|_{L_{xT}^{2}}\nonumber\\
&&\leq C\sum N_{1}^{-\epsilon}N_{1}^{a-1-s-\frac{1}{4}+\epsilon}
\|P_{N_{1}}u_{1}\|_{X_{0,b}}\|P_{N_{2}}u_{2}\|_{X_{0,b}}\|P_{N}h\|_{L_{xT}^{2}}\nonumber\\
&&\leq C\sum N_{1}^{-\epsilon}\|P_{N_{1}}u_{1}\|_{X_{0,b}}\|P_{N_{2}}u_{2}\|_{X_{0,b}}\|P_{N}h\|_{L_{xT}^{2}}\nonumber\\
&&\leq C\|h\|_{L_{xT}^{2}}\prod_{j=1}^{2}\|u_{j}\|_{X_{0,b}}.\label{3.014}
\end{eqnarray}

This completes the proof of Lemma 3.1.

\begin{Lemma}\label{Lemma3.2}
Let $2\leq n\leq 6$, $0<\epsilon\ll1$, $0<T<1$, $s\geq\frac{n-3}{4}+2\epsilon$, $b_{0}=-\frac{1}{2}+\frac{\epsilon}{12}$, $b=\frac{1}{2}+\frac{\epsilon}{24}$ and
\begin{align*}
0\leq a &\leq \min\left\{1-2\epsilon, s+\frac{5-n}{2}-4\epsilon\right\}\\
&=
\begin{cases}
1-2\epsilon,&n=2,\\
\min\left\{1-2\epsilon, s+\frac{5-n}{2}-4\epsilon\right\},&3\leq n\leq 6.
\end{cases}
\end{align*}
Then, we have
\begin{eqnarray}
&&\left\|\eta\left(\frac{t}{T}\right)\mathscr{F}_{xt}^{-1}\left(\frac{|\xi|^{2}\mathscr{F}_{xt}(u^{2})}{2i\phi_{\pm}(\xi)}\right)\right\|_{X_{s+a,b_{0}}}\leq CT^{-b_{0}}\|u\|_{X_{s,b}}^{2}.\label{3.015}
\end{eqnarray}
\end{Lemma}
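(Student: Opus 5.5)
The plan is to copy the architecture of the proof of Lemma 3.1, changing only the bilinear input: \eqref{2.02} replaces \eqref{2.01}. Using \eqref{3.02} and \cite[Lemma 2.11]{T2006} exactly as in \eqref{3.03}--\eqref{3.04}, \eqref{3.015} reduces to
\begin{eqnarray*}
&&\left\||D|\langle D\rangle^{s+a-2}(u^{2})\right\|_{L_{xT}^{2}}\leq C\|u\|_{X_{s,b}}^{2}.
\end{eqnarray*}
By duality this becomes the bound $|I_{1}|\leq C\|h\|_{L^{2}_{xT}}\prod_{j}\|u_{j}\|_{X_{0,b}}$ for the analogue of \eqref{3.07} on $\R^{n}$. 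I would decompose dyadically, $P_{N}h$ and $P_{N_{j}}u_{j}$, assume $N_{1}\geq N_{2}$, and apply H\"older in each block. This bounds each dyadic piece by
\begin{eqnarray*}
&&N\langle N\rangle^{s+a-2}\langle N_{1}\rangle^{-s}\langle N_{2}\rangle^{-s}\|P_{N_{1}}u_{1}P_{N_{2}}u_{2}\|_{L^{2}_{xT}}\|P_{N}h\|_{L^{2}_{xT}}.
\end{eqnarray*}
I keep the same case split as in Lemma 3.1: Case 1 ($N_{1}\leq10$), Case 2 ($N_{1}\gg N_{2}$, hence $N\sim N_{1}$), Case 3 ($N_{1}\sim N_{2}\gg N$), and Case 4 ($N_{1}\sim N_{2}\sim N$).

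\textbf{Low-frequency cases.} In Case 1 and in Case 2A ($N_{2}\leq 8$) I would not use the bilinear estimate. Instead I put $P_{N_{2}}u_{2}$ in $L^{\infty}_{xT}$ via Bernstein, at a cost of $N_{2}^{n/2}$, and use $\|P_{N_{2}}u_{2}\|_{L^{\infty}_{T}L^{2}_{x}}\leq C\|P_{N_{2}}u_{2}\|_{X_{0,b}}$. All low frequencies are bounded, and in Case 2A the remaining weight is $N^{a-1}\leq 1$ since $a<1$. Summation then follows from Cauchy--Schwarz in $N\sim N_{1}$.

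\textbf{Case 2B} ($8\leq N_{2}\ll N_{1}\sim N$). Here \eqref{2.02} gives the factor
\begin{eqnarray*}
&&N_{1}^{a-1}N_{2}^{-s}\,N_{2}^{\frac n2-1+2\epsilon}N_{1}^{-\frac12+\epsilon}.
\end{eqnarray*}
If $\frac n2-1+2\epsilon-s\leq 0$, the $N_{1}$-power $a-\frac32+\epsilon<-\epsilon$ suffices. Otherwise I bound $N_{2}\leq N_{1}$, which gives $N_{1}^{a-s+\frac{n-5}{2}+3\epsilon}$. This is at most $N_{1}^{-\epsilon}$ exactly because $a\leq s+\frac{5-n}{2}-4\epsilon$. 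For $n=2$ the constraint is automatic from $a\leq1-2\epsilon$ and $s\geq -\frac14+2\epsilon$. The factor $N_{1}^{-\epsilon}$ then absorbs the sum over $N_{2}$.

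\textbf{Cases 3 and 4.} Here both $u_{j}$ sit at frequency $\sim N_{1}$, so \eqref{2.02} with $N_{2}\sim N_{1}$ only yields $N_{1}^{\frac{n-3}{2}+3\epsilon}$. Each block is then controlled by
\begin{eqnarray*}
&&N\langle N\rangle^{s+a-2}N_{1}^{-2s+\frac{n-3}{2}+3\epsilon}.
\end{eqnarray*}
\begin{itemize}
\item Case 3A ($N\leq8$): the bound is $N_{1}^{-2s+\frac{n-3}{2}+3\epsilon}\leq N_{1}^{-\epsilon}$, because $s\geq\frac{n-3}{4}+2\epsilon$. This is exactly where $s_{1}=\frac{n-3}{4}$ enters.
\item Case 3B, subcase $s+a-1\leq0$ (Case $3B_{1}$): I drop $N^{s+a-1}\leq1$ and conclude as in Case 3A.
\item Case 3B, subcase $s+a-1>0$: I use $N\leq N_{1}$ to get $N_{1}^{a-s+\frac{n-5}{2}+3\epsilon}\leq N_{1}^{-\epsilon}$ from the constraint on $a$.
\item Case 4: the bound is identical to the last subcase.
\end{itemize}

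\textbf{Main obstacle.} I expect the delicate point to be the summation in Case 3, where the output frequency $N$ is much smaller than $N_{1}\sim N_{2}$. There one sums over $N\lesssim N_{1}$ while $h$ is only in $L^{2}$. I would handle it by Cauchy--Schwarz in $N_{1}\sim N_{2}$ and by letting the $N_{1}^{-\epsilon}$ gain absorb the at most $\log N_{1}$ dyadic values of $N$. Keeping a spare $\epsilon$ in every exponent is essential, which is why the hypotheses carry the losses $2\epsilon$ and $4\epsilon$. The restriction $n\leq6$ is only where $s_{1}=\frac{n-3}{4}$ is the binding condition. For larger $n$ the constraint from Case 3A is dominated by $s\geq\frac{n-5}{2}$, which is needed for $a\geq0$.
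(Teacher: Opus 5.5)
Your proposal is correct and follows the paper's proof essentially step for step. It uses the same reduction via \eqref{3.02} and \cite[Lemma 2.11]{T2006}, the same duality and dyadic case split (Cases 1, 2A/2B, 3A, $3B_{1}$/$3B_{2}$ according to the sign of $s+a-1$, and 4), Bernstein in the low-frequency cases and \eqref{2.02} everywhere else. The only cosmetic difference is in Case 2B: the paper transfers the negative power $N_{1}^{a-\frac{3}{2}+2\epsilon}$ onto $N_{2}$ instead of splitting on the sign of the $N_{2}$-exponent, and both routes lead to the same condition $a\leq s+\frac{5-n}{2}-4\epsilon$.
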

\noindent {\bf Proof.}
Inspired by \cite{ET2025}, we prove Lemma 3.2.
From \eqref{3.02}, to prove \eqref{3.015}, it suffices to show that
\begin{eqnarray}
&&\left\|\eta\left(\frac{t}{T}\right)|D|\langle D\rangle^{-2}(u^{2})\right\|_{X_{s+a,b_{0}}}\leq CT^{-b_{0}}\|u\|_{X_{s,b}}^{2}.\label{3.016}
\end{eqnarray}
By using \cite[Lemma 2.11]{T2006}, we have
\begin{eqnarray}
&&\left\|\eta\left(\frac{t}{T}\right)|D|\langle D\rangle^{-2}(u^{2})\right\|_{X_{s+a,b_{0}}}\leq CT^{-b_{0}}
\left\|\eta\left(\frac{t}{T}\right)|D|\langle D\rangle^{-2}(u^{2})\right\|_{X_{s+a,0}}\nonumber\\
&&=CT^{-b_{0}}\left\||D|\langle D\rangle^{-2}(u^{2})\right\|_{L_{T}^{2}H_{x}^{s+a}}.\label{3.017}
\end{eqnarray}
By using \eqref{3.017}, to prove \eqref{3.015}, it suffices to show that
\begin{eqnarray}
&&\left\||D|\langle D\rangle^{s+a-2}(u^{2})\right\|_{L_{xT}^{2}}\leq C\|u\|_{X_{s,b}}^{2}.\label{3.018}
\end{eqnarray}
According to the duality idea, to prove \eqref{3.018}, we only need to prove
\begin{eqnarray}
&&\left|\int_{0}^{T}\int_{\SR^{n}}|D|\langle D\rangle^{s+a-2}\left(\langle D\rangle^{-s} u_{1}\langle D\rangle^{-s} u_{2}\right)(x,t)
\overline{h}(x,t)dxdt\right|\nonumber\\
&&\leq C\|h\|_{L_{xT}^{2}}\prod_{j=1}^{2}\|u_{j}\|_{X_{0,b}}.\label{3.019}
\end{eqnarray}
We denote
\begin{eqnarray}
&&I_{2}:=
\int_{0}^{T}\int_{\SR^{n}}|D|\langle D\rangle^{s+a-2}\left(\langle D\rangle^{-s} u_{1}\langle D\rangle^{-s} u_{2}\right)(x,t)\overline{h}(x,t)dxdt.\label{3.020}
\end{eqnarray}
Let $P_{N}h$ and $P_{N_{j}}u_{j}(j=1,2)$ be the dyadic decompositions of $h$ and $u_{j}$, respectively.
Then, we have that $\supp \mathscr{F}_{x}P_{N}h\subset\{\xi\in\R^{n}:|\xi|\sim N\}$, and $\supp \mathscr{F}_{x}P_{N_{j}}u_{j}\subset \{\xi\in\R^{n}:|\xi|\sim N_{j}\}$, where $N$ and $N_{j}$ are dyadic numbers. We define $\sum=\sum\limits_{N_{1}, N_{2}, N}$. Without loss of generality, we assume $N_{1}\geq N_{2}$.

\noindent We consider the following cases: {\bf Case 1}: $N_{1}\leq 10$; {\bf Case
2}: $N_{1}\geq 10$, $N_{1}\gg N_{2}$ and {\bf Case 3}: $N_{1}\geq 10$, $N_{1}\sim N_{2}\gg N$; {\bf Case 4}: $N_{1}\geq 10$, $N_{1}\sim N_{2}\sim N$.

\noindent {\bf Case 1}: When $N_{1}\leq 10$, by using H\"{o}lder inequality and Bernstein inequality, we have
\begin{eqnarray}
&&|I_{2}|\leq C\sum N\langle N\rangle^{s+a-2}\langle N_{1}\rangle^{-s}\langle N_{2}\rangle^{-s}
\|P_{N_{1}}u_{1}\|_{L_{xT}^{2}}\|P_{N_{2}}u_{2}\|_{L_{xT}^{\infty}}\|P_{N}h\|_{L_{xT}^{2}}\nonumber\\
&&\leq C\sum  NN_{2}^{\frac{n}{2}}\langle N_{1}\rangle^{-s}\langle N_{2}\rangle^{-s}
\|P_{N_{1}}u_{1}\|_{X_{0,b}}\|P_{N_{2}}u_{2}\|_{X_{0,b}}\|P_{N}h\|_{L_{xT}^{2}}\nonumber\\
&&\leq C\sum NN_{1}^{\frac{n}{4}}N_{2}^{\frac{n}{4}}\|P_{N_{1}}u_{1}\|_{X_{0,b}}\|P_{N_{2}}u_{2}\|_{X_{0,b}}\|P_{N}h\|_{L_{xT}^{2}}\nonumber\\
&&\leq C\|h\|_{L_{xT}^{2}}\prod_{j=1}^{2}\|u_{j}\|_{X_{0,b}}.\label{3.021}
\end{eqnarray}
{\bf Case 2}: When $N_{1}\geq 10$,  $N_{1}\gg N_{2}$, we consider the following cases: {\bf Case 2A}: $N_{2}\leq 8$ and {\bf Case
2B}: $N_{2}\geq 8$.

\noindent {\bf Case 2A}: $N_{1}\geq 10$,  $N_{1}\gg N_{2}$, $N_{2}\leq 8$, we have $N_{1}\sim N$.
By using H\"{o}lder inequality and Bernstein inequality, $0\leq a\leq 1-2\epsilon$, we have
\begin{eqnarray}
&&|I_{2}|\leq C\sum NN^{s+a-2}N_{1}^{-s}\langle N_{2}\rangle^{-s}
\|P_{N_{1}}u_{1}\|_{L_{xT}^{2}}\|P_{N_{2}}u_{2}\|_{L_{xT}^{\infty}}\|P_{N}h\|_{L_{xT}^{2}}\nonumber\\
&&\leq C\sum N^{a-1}N_{2}^{\frac{n}{2}}\langle N_{2}\rangle^{-s}
\|P_{N_{1}}u_{1}\|_{X_{0,b}}\|P_{N_{2}}u_{2}\|_{X_{0,b}}\|P_{N}h\|_{L_{xT}^{2}}\nonumber\\
&&\leq C\sum N^{a-1}N_{2}^{\frac{n}{2}}\|P_{N_{1}}u_{1}\|_{X_{0,b}}\|P_{N_{2}}u_{2}\|_{X_{0,b}}\|P_{N}h\|_{L_{xT}^{2}}\nonumber\\
&&\leq C\|h\|_{L_{xT}^{2}}\prod_{j=1}^{2}\|u_{j}\|_{X_{0,b}}.\label{3.022}
\end{eqnarray}
{\bf Case 2B}: $N_{1}\geq 10$,  $N_{1}\gg N_{2}$, $N_{2}\geq 8$, we have $N_{1}\sim N$.
By using H\"{o}lder inequality and \eqref{2.02}, $2\leq n\leq 6$, $s\geq\frac{n-3}{4}+2\epsilon$, $0\leq a\leq \min\{1-2\epsilon,s+\frac{5-n}{2}-4\epsilon\}$, we have
\begin{eqnarray}
&&|I_{2}|\leq C\sum NN^{s+a-2}N_{1}^{-s}N_{2}^{-s}
\|P_{N_{1}}u_{1}P_{N_{2}}u_{2}\|_{L_{xT}^{2}}\|P_{N}h\|_{L_{xT}^{2}}\nonumber\\
&&\leq C\sum N_{1}^{-\epsilon}N_{1}^{a-1+\epsilon}N_{1}^{-\frac{1}{2}+\epsilon}N_{2}^{\frac{n}{2}-1+2\epsilon}N_{2}^{-s}
\|P_{N_{1}}u_{1}\|_{X_{0,b}}\|P_{N_{2}}u_{2}\|_{X_{0,b}}\|P_{N}h\|_{L_{xT}^{2}}\nonumber\\
&&\leq C\sum N_{1}^{-\epsilon} N_{2}^{-s+a-\frac{5}{2}+\frac{n}{2}+4\epsilon}
\|P_{N_{1}}u_{1}\|_{X_{0,b}}\|P_{N_{2}}u_{2}\|_{X_{0,b}}\|P_{N}h\|_{L_{xT}^{2}}\nonumber\\
&&\leq C\|h\|_{L_{xT}^{2}}\prod_{j=1}^{2}\|u_{j}\|_{X_{0,b}}.\label{3.023}
\end{eqnarray}
{\bf Case 3}: $N_{1}\geq 10$, $N_{1}\sim N_{2}\gg N$, we consider the following cases: {\bf Case 3A}: $N\leq 8$ and {\bf Case
3B}: $N\geq 8$.

\noindent {\bf Case 3A}: $N_{1}\geq 10$, $N_{1}\sim N_{2}\gg N$, $N\leq 8$.
By using H\"{o}lder inequality and \eqref{2.02}, $s\geq \frac{n-3}{4}+2\epsilon$, we have
\begin{eqnarray}
&&|I_{2}|\leq C\sum N\langle N\rangle^{s+a-2}N_{1}^{-s}N_{2}^{-s}
\|P_{N_{1}}u_{1}P_{N_{2}}u_{2}\|_{L_{xT}^{2}}\|P_{N}h\|_{L_{xT}^{2}}\nonumber\\
&&\leq C\sum NN_{1}^{-\epsilon}N_{1}^{-\frac{3}{2}+\frac{n}{2}-2s+4\epsilon}
\|P_{N_{1}}u_{1}\|_{X_{0,b}}\|P_{N_{2}}u_{2}\|_{X_{0,b}}\|P_{N}h\|_{L_{xT}^{2}}\nonumber\\
&&\leq C\sum NN_{1}^{-\epsilon}\|P_{N_{1}}u_{1}\|_{X_{0,b}}\|P_{N_{2}}u_{2}\|_{X_{0,b}}\|P_{N}h\|_{L_{xT}^{2}}\nonumber\\
&&\leq C\|h\|_{L_{xT}^{2}}\prod_{j=1}^{2}\|u_{j}\|_{X_{0,b}}.\label{3.024}
\end{eqnarray}
{\bf Case 3B}: $N_{1}\geq 10$, $N_{1}\sim N_{2}\gg N$, $N\geq 8$, we consider the following cases: {\bf Case 3$B_{1}$}: $0\leq a\leq 1-s$, $\frac{n-3}{4}+2\epsilon\leq s\leq 1$ and {\bf Case
3$B_{2}$}: $a\geq 1-s$.

\noindent {\bf Case 3$B_{1}$}: When $0\leq a\leq 1-s$, $\frac{n-3}{4}+2\epsilon\leq s\leq 1$,
by using H\"{o}lder inequality and \eqref{2.02}, we have
\begin{eqnarray}
&&|I_{2}|\leq C\sum NN^{s+a-2}N_{1}^{-s}N_{2}^{-s}
\|P_{N_{1}}u_{1}P_{N_{2}}u_{2}\|_{L_{xT}^{2}}\|P_{N}h\|_{L_{xT}^{2}}\nonumber\\
&&\leq C\sum N_{1}^{-\epsilon}N^{s+a-1}N_{1}^{-2s-\frac{3}{2}+\frac{n}{2}+4\epsilon}
\|P_{N_{1}}u_{1}\|_{X_{0,b}}\|P_{N_{2}}u_{2}\|_{X_{0,b}}\|P_{N}h\|_{L_{xT}^{2}}\nonumber\\
&&\leq C\sum N_{1}^{-\epsilon}\|P_{N_{1}}u_{1}\|_{X_{0,b}}\|P_{N_{2}}u_{2}\|_{X_{0,b}}\|P_{N}h\|_{L_{xT}^{2}}\nonumber\\
&&\leq C\|h\|_{L_{xT}^{2}}\prod_{j=1}^{2}\|u_{j}\|_{X_{0,b}}.\label{3.025}
\end{eqnarray}
{\bf Case 3$B_{2}$}: When $s\geq\frac{n-3}{4}+2\epsilon$,  $1-s\le a\leq s+\frac{5-n}{2}-4\epsilon$,
by using H\"{o}lder inequality and \eqref{2.02}, we have
\begin{eqnarray}
&&|I_{2}|\leq C\sum NN^{s+a-2}N_{1}^{-s}N_{2}^{-s}
\|P_{N_{1}}u_{1}P_{N_{2}}u_{2}\|_{L_{xT}^{2}}\|P_{N}h\|_{L_{xT}^{2}}\nonumber\\
&&\leq C\sum N_{1}^{-\epsilon}N^{s+a-1}N_{1}^{-2s-\frac{3-n}{2}+4\epsilon}
\|P_{N_{1}}u_{1}\|_{X_{0,b}}\|P_{N_{2}}u_{2}\|_{X_{0,b}}\|P_{N}h\|_{L_{xT}^{2}}\nonumber\\
&&\leq C\sum N_{1}^{-\epsilon}N_{1}^{a-s-1-\frac{3-n}{2}+4\epsilon}\|P_{N_{1}}u_{1}\|_{X_{0,b}}\|P_{N_{2}}u_{2}\|_{X_{0,b}}\|P_{N}h\|_{L_{xT}^{2}}\nonumber\\
&&\leq C\sum N_{1}^{-\epsilon}\|P_{N_{1}}u_{1}\|_{X_{0,b}}\|P_{N_{2}}u_{2}\|_{X_{0,b}}\|P_{N}h\|_{L_{xT}^{2}}\nonumber\\
&&\leq C\|h\|_{L_{xT}^{2}}\prod_{j=1}^{2}\|u_{j}\|_{X_{0,b}}.\label{3.026}
\end{eqnarray}
{\bf Case 4}: $N_{1}\geq 10$, $N_{1}\sim N_{2}\sim N$, by using H\"{o}lder inequality and \eqref{2.02},  $s\geq\frac{n-3}{4}+2\epsilon$, $0\leq a\leq s+\frac{5-n}{2}-4\epsilon$, we have
\begin{eqnarray}
&&|I_{2}|\leq C\sum NN^{s+a-2}N_{1}^{-s}N_{2}^{-s}
\|P_{N_{1}}u_{1}P_{N_{2}}u_{2}\|_{L_{xT}^{2}}\|P_{N}h\|_{L_{xT}^{2}}\nonumber\\
&&\leq C\sum N_{1}^{-\epsilon}N_{1}^{a-1-s-\frac{3-n}{2}+4\epsilon}
\|P_{N_{1}}u_{1}\|_{X_{0,b}}\|P_{N_{2}}u_{2}\|_{X_{0,b}}\|P_{N}h\|_{L_{xT}^{2}}\nonumber\\
&&\leq C\sum N_{1}^{-\epsilon}\|P_{N_{1}}u_{1}\|_{X_{0,b}}\|P_{N_{2}}u_{2}\|_{X_{0,b}}\|P_{N}h\|_{L_{xT}^{2}}\nonumber\\
&&\leq C\|h\|_{L_{xT}^{2}}\prod_{j=1}^{2}\|u_{j}\|_{X_{0,b}}.\label{3.027}
\end{eqnarray}

This completes the proof of Lemma 3.2.

\begin{Remark}
The restriction on $n$ arises from the fact that in \eqref{3.025}, we require $\frac{n-3}{4}+2\epsilon\leq s\leq 1$, which forces the restriction $n\leq 6$.
\end{Remark}

\begin{Lemma}\label{Lemma3.3}
Let $n\geq 7$, $0<T<1$, $0<\epsilon\ll1$, $s\geq\frac{n-5}{2}+4\epsilon$, $b_{0}=-\frac{1}{2}+\frac{\epsilon}{12}$, $b=\frac{1}{2}+\frac{\epsilon}{24}$ and
\begin{eqnarray*}
&&0\leq a\leq \min\left\{1-2\epsilon, s+\frac{5-n}{2}-4\epsilon\right\}.
\end{eqnarray*}
Then, we have
\begin{eqnarray}
&&\left\|\eta\left(\frac{t}{T}\right)\mathscr{F}_{xt}^{-1}\left(\frac{|\xi|^{2}\mathscr{F}_{xt}(u^{2})}{2i\phi_{\pm}(\xi)}\right)\right\|_{X_{s+a,b_{0}}}\leq CT^{-b_{0}}\|u\|_{X_{s,b}}^{2}.\label{3.028}
\end{eqnarray}
\end{Lemma}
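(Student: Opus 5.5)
The argument follows the proofs of Lemmas 3.1 and 3.2 verbatim up to the dyadic reduction; only the case analysis changes. By \eqref{3.02} and \cite[Lemma 2.11]{T2006}, exactly as in \eqref{3.016}--\eqref{3.018}, it suffices to prove
\[
\left\||D|\langle D\rangle^{s+a-2}(u^{2})\right\|_{L_{xT}^{2}}\leq C\|u\|_{X_{s,b}}^{2}.
\]
By duality this reduces to bounding the form $I_{3}$, defined as in \eqref{3.020}, by $C\|h\|_{L_{xT}^{2}}\prod_{j}\|u_{j}\|_{X_{0,b}}$. We decompose $h,u_{1},u_{2}$ dyadically with $N_{1}\geq N_{2}$ and use the same four cases as before.

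\textbf{Low-frequency cases.} Cases 1 and 2A follow from H\"older and Bernstein exactly as in \eqref{3.021}--\eqref{3.022}. Case 1 involves only finitely many dyadic blocks. In Case 2A the factor $N^{a-1}N_{2}^{n/2}$ is summable, since $a\leq 1-2\epsilon$ and $N_{2}\leq 8$.

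\textbf{Case 2B} ($N_{1}\sim N\gg N_{2}\geq 8$). Here \eqref{2.02} gives the symbol bound
\[
N_{1}^{a-1}N_{1}^{-\frac12+\epsilon}N_{2}^{\frac n2-1+2\epsilon-s}.
\]
The condition $a\leq s+\frac{5-n}{2}-4\epsilon$ bounds this by $N_{1}^{-\epsilon}N_{2}^{-\epsilon}$, which is summable. The computation is identical to \eqref{3.023}.

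\textbf{Case 3A and Case 4.} Case 3A ($N\leq 8$) needs $-2s+\frac{n-3}{2}+4\epsilon<0$. Case 4 needs $a-s-1-\frac{3-n}{2}+4\epsilon\leq -\epsilon$, which is exactly the constraint $a\leq s+\frac{5-n}{2}-5\epsilon$, up to relabeling $\epsilon$. Since $n\geq 7$ and $s\geq\frac{n-5}{2}+4\epsilon$, we have $s\geq \frac{n-3}{4}+2\epsilon$ because $\frac{n-5}{2}\geq \frac{n-3}{4}$ iff $n\geq 7$. So Case 3A goes through as in \eqref{3.024}, and Case 4 as in \eqref{3.027}.

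\textbf{Case 3B} ($N_{1}\sim N_{2}\gg N\geq 8$). This is where Lemma 3.2 used the split into $a\leq 1-s$ versus $a\geq 1-s$ and required $s\leq 1$; that restriction fails here, since $s>1$ once $n\geq 8$. I will avoid the split altogether.

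\begin{itemize}
\item If $s+a-1\geq 0$, bound $N^{s+a-1}\leq N_{1}^{s+a-1}$. The total exponent of $N_{1}$ becomes $a-s-1-\frac{3-n}{2}+4\epsilon$, and the sum over $N\ll N_{1}$ costs at most a factor $\log N_{1}$. This is controlled by the margin $N_{1}^{-\epsilon}$ that is already present in the constraint on $a$.
\item If $s+a-1<0$, bound $N^{s+a-1}\leq 1$. The exponent is then $-2s+\frac{n-3}{2}+4\epsilon$, which is negative by the bound on $s$ established above.
\end{itemize}

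I expect this case to be the main obstacle. The loss in \eqref{2.02} is $N_{2}^{\frac n2-1}$, and in the high$\times$high$\to$low regime, as well as in the diagonal case, it must be absorbed entirely by the $N_{1}^{-2s}$ weight. This is what forces $s\geq\frac{n-5}{2}$ for $n\geq7$, rather than $\frac{n-3}{4}$. The required care is to keep an $N_{1}^{-\epsilon}$ factor so that the summation over $N\lesssim N_{1}$, and over $N_{1}\sim N_{2}$ via Cauchy--Schwarz, converges.

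Collecting all cases yields \eqref{3.028}.
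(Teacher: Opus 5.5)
Your proposal is correct and follows the paper's proof essentially step by step. It uses the same reduction to $\||D|\langle D\rangle^{s+a-2}(u^{2})\|_{L_{xT}^{2}}\le C\|u\|_{X_{s,b}}^{2}$, the same dyadic cases, Bernstein in Cases 1 and 2A with \eqref{2.02} elsewhere, and in Case 3B the same bound $N^{s+a-1}\le N_{1}^{s+a-1}$ closed by $a\le s+\frac{5-n}{2}-4\epsilon$.

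Two small corrections. First, $s\ge\frac{n-5}{2}+4\epsilon\ge 1+4\epsilon>1$ already for $n=7$, not just $n\ge 8$. So $s+a-1>0$ always, your second sub-case of Case 3B is vacuous, and this is exactly how the paper handles the $s\le 1$ issue. Second, if you count the $N_{1}^{-\epsilon}$ margin only once, the hypothesis on $a$ suffices exactly in Cases 2B and 4. Case 2B yields only $N_{1}^{-\epsilon}$, with no $N_{2}^{-\epsilon}$ gain, and Case 4 needs no relabeling of $\epsilon$.
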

\noindent {\bf Proof.}
Inspired by \cite{ET2025}, we prove Lemma 3.3.
From \eqref{3.02}, to prove \eqref{3.028}, it suffices to show that
\begin{eqnarray}
&&\left\|\eta\left(\frac{t}{T}\right)|D|\langle D\rangle^{-2}(u^{2})\right\|_{X_{s+a,b_{0}}}\leq CT^{-b_{0}}\|u\|_{X_{s,b}}^{2}.\label{3.029}
\end{eqnarray}
By using \cite[Lemma 2.11]{T2006}, we have
\begin{eqnarray}
&&\left\|\eta\left(\frac{t}{T}\right)|D|\langle D\rangle^{-2}(u^{2})\right\|_{X_{s+a,b_{0}}}\leq CT^{-b_{0}}
\left\|\eta\left(\frac{t}{T}\right)|D|\langle D\rangle^{-2}(u^{2})\right\|_{X_{s+a,0}}\nonumber\\
&&=CT^{-b_{0}}\left\||D|\langle D\rangle^{-2}(u^{2})\right\|_{L_{T}^{2}H_{x}^{s+a}}.\label{3.030}
\end{eqnarray}
By using \eqref{3.030}, to prove \eqref{3.028}, it suffices to show that
\begin{eqnarray}
&&\left\||D|\langle D\rangle^{s+a-2}(u^{2})\right\|_{L_{xT}^{2}}\leq C\|u\|_{X_{s,b}}^{2}.\label{3.031}
\end{eqnarray}
According to the duality idea, to prove \eqref{3.031}, we only need to prove
\begin{eqnarray}
&&\left|\int_{0}^{T}\int_{\SR^{n}}|D|\langle D\rangle^{s+a-2}\left(\langle D\rangle^{-s} u_{1}\langle D\rangle^{-s} u_{2}\right)(x,t)
\overline{h}(x,t)dxdt\right|\nonumber\\
&&\leq C\|h\|_{L_{xT}^{2}}\prod_{j=1}^{2}\|u_{j}\|_{X_{0,b}}.\label{3.032}
\end{eqnarray}
We denote
\begin{eqnarray}
&&I_{3}:=
\int_{0}^{T}\int_{\SR^{n}}|D|\langle D\rangle^{s+a-2}\left(\langle D\rangle^{-s} u_{1}\langle D\rangle^{-s} u_{2}\right)(x,t)\overline{h}(x,t)dxdt.\label{3.033}
\end{eqnarray}
Let $P_{N}h$ and $P_{N_{j}}u_{j}(j=1,2)$ be the dyadic decompositions of $h$ and $u_{j}$, respectively.
Then, we have that $\supp \mathscr{F}_{x}P_{N}h\subset\{\xi\in\R^{n}:|\xi|\sim N\}$, and $\supp \mathscr{F}_{x}P_{N_{j}}u_{j}\subset \{\xi\in\R^{n}:|\xi|\sim N_{j}\}$, where $N$ and $N_{j}$ are dyadic numbers. We define $\sum=\sum\limits_{N_{1}, N_{2}, N}$. Without loss of generality, we assume $N_{1}\geq N_{2}$.

\noindent We consider the following cases: {\bf Case 1}: $N_{1}\leq 10$; {\bf Case
2}: $N_{1}\geq 10$, $N_{1}\gg N_{2}$ and {\bf Case 3}: $N_{1}\geq 10$, $N_{1}\sim N_{2}\gg N$; {\bf Case 4}: $N_{1}\geq 10$, $N_{1}\sim N_{2}\sim N$.

\noindent {\bf Case 1}: When $N_{1}\leq 10$, by using H\"{o}lder inequality and Bernstein inequality, we have
\begin{eqnarray}
&&|I_{3}|\leq C\sum N\langle N\rangle^{s+a-2}\langle N_{1}\rangle^{-s}\langle N_{2}\rangle^{-s}
\|P_{N_{1}}u_{1}\|_{L_{xT}^{2}}\|P_{N_{2}}u_{2}\|_{L_{xT}^{\infty}}\|P_{N}h\|_{L_{xT}^{2}}\nonumber\\
&&\leq C\sum NN_{2}^{\frac{n}{2}}\langle N_{1}\rangle^{-s}\langle N_{2}\rangle^{-s}
\|P_{N_{1}}u_{1}\|_{X_{0,b}}\|P_{N_{2}}u_{2}\|_{X_{0,b}}\|P_{N}h\|_{L_{xT}^{2}}\nonumber\\
&&\leq C\sum NN_{1}^{\frac{n}{4}}N_{2}^{\frac{n}{4}}\|P_{N_{1}}u_{1}\|_{X_{0,b}}\|P_{N_{2}}u_{2}\|_{X_{0,b}}\|P_{N}h\|_{L_{xT}^{2}}\nonumber\\
&&\leq C\|h\|_{L_{xT}^{2}}\prod_{j=1}^{2}\|u_{j}\|_{X_{0,b}}.\label{3.034}
\end{eqnarray}
{\bf Case 2}: When $N_{1}\geq 10$,  $N_{1}\gg N_{2}$, we consider the following cases: {\bf Case 2A}: $N_{2}\leq 8$ and {\bf Case
2B}: $N_{2}\geq 8$.

\noindent {\bf Case 2A}: $N_{1}\geq 10$,  $N_{1}\gg N_{2}$, $N_{2}\leq 8$, we have $N_{1}\sim N$.
By using H\"{o}lder inequality and Bernstein inequality, $0\leq a\leq 1-2\epsilon$, we have
\begin{eqnarray}
&&|I_{3}|\leq C\sum NN^{s+a-2}N_{1}^{-s}\langle N_{2}\rangle^{-s}
\|P_{N_{1}}u_{1}\|_{L_{xT}^{2}}\|P_{N_{2}}u_{2}\|_{L_{xT}^{\infty}}\|P_{N}h\|_{L_{xT}^{2}}\nonumber\\
&&\leq C\sum N^{a-1}N_{2}^{\frac{n}{2}}\langle N_{2}\rangle^{-s}
\|P_{N_{1}}u_{1}\|_{X_{0,b}}\|P_{N_{2}}u_{2}\|_{X_{0,b}}\|P_{N}h\|_{L_{xT}^{2}}\nonumber\\
&&\leq C\sum N^{a-1}N_{2}^{\frac{n}{2}}\|P_{N_{1}}u_{1}\|_{X_{0,b}}\|P_{N_{2}}u_{2}\|_{X_{0,b}}\|P_{N}h\|_{L_{xT}^{2}}\nonumber\\
&&\leq C\|h\|_{L_{xT}^{2}}\prod_{j=1}^{2}\|u_{j}\|_{X_{0,b}}.\label{3.035}
\end{eqnarray}
{\bf Case 2B}: When $N_{1}\geq 10$,  $N_{1}\gg N_{2}$, $N_{2}\geq 8$, we have $N_{1}\sim N$.
By using H\"{o}lder inequality and \eqref{2.02}, $s\geq\frac{n-5}{2}+4\epsilon$, $0\leq a\leq \min\{1-2\epsilon,s+\frac{5-n}{2}-4\epsilon\}$, we have
\begin{eqnarray}
&&|I_{3}|\leq C\sum NN^{s+a-2}N_{1}^{-s}N_{2}^{-s}
\|P_{N_{1}}u_{1}P_{N_{2}}u_{2}\|_{L_{xT}^{2}}\|P_{N}h\|_{L_{xT}^{2}}\nonumber\\
&&\leq C\sum N_{1}^{-\epsilon}N_{1}^{a-1+\epsilon}N_{1}^{-\frac{1}{2}+\epsilon}N_{2}^{\frac{n}{2}-1+2\epsilon}N_{2}^{-s}
\|P_{N_{1}}u_{1}\|_{X_{0,b}}\|P_{N_{2}}u_{2}\|_{X_{0,b}}\|P_{N}h\|_{L_{xT}^{2}}\nonumber\\
&&\leq C\sum N_{1}^{-\epsilon} N_{2}^{-s+a-\frac{5}{2}+\frac{n}{2}+2\epsilon}
\|P_{N_{1}}u_{1}\|_{X_{0,b}}\|P_{N_{2}}u_{2}\|_{X_{0,b}}\|P_{N}h\|_{L_{xT}^{2}}\nonumber\\
&&\leq C\|h\|_{L_{xT}^{2}}\prod_{j=1}^{2}\|u_{j}\|_{X_{0,b}}.\label{3.036}
\end{eqnarray}
{\bf Case 3}: $N_{1}\geq 10$, $N_{1}\sim N_{2}\gg N$, we consider the following  cases: {\bf Case 3A}: $N\leq 8$ and {\bf Case
3B}: $N\geq 8$.

\noindent {\bf Case 3A}: $N_{1}\geq 10$, $N_{1}\sim N_{2}\gg N$, $N\leq 8$.
By using H\"{o}lder inequality and \eqref{2.02}, $n\geq 7$, $s\geq\frac{n-5}{2}+4\epsilon$, we have
\begin{eqnarray}
&&|I_{3}|\leq C\sum N\langle N\rangle^{s+a-2}N_{1}^{-s}N_{2}^{-s}
\|P_{N_{1}}u_{1}P_{N_{2}}u_{2}\|_{L_{xT}^{2}}\|P_{N}h\|_{L_{xT}^{2}}\nonumber\\
&&\leq C\sum NN_{1}^{-\epsilon}N_{1}^{-\frac{3}{2}+\frac{n}{2}-2s+4\epsilon}
\|P_{N_{1}}u_{1}\|_{X_{0,b}}\|P_{N_{2}}u_{2}\|_{X_{0,b}}\|P_{N}h\|_{L_{xT}^{2}}\nonumber\\
&&\leq C\sum NN_{1}^{-\epsilon}\|P_{N_{1}}u_{1}\|_{X_{0,b}}\|P_{N_{2}}u_{2}\|_{X_{0,b}}\|P_{N}h\|_{L_{xT}^{2}}\nonumber\\
&&\leq C\|h\|_{L_{xT}^{2}}\prod_{j=1}^{2}\|u_{j}\|_{X_{0,b}}.\label{3.037}
\end{eqnarray}
{\bf Case 3B}: $N_{1}\geq 10$, $N_{1}\sim N_{2}\gg N$, $N\geq 8$, noting that $s\geq \frac{n-5}{2}+4\epsilon>1$, $n\geq 7$,
by using H\"{o}lder inequality and \eqref{2.02}, $0\leq a\leq s+\frac{5-n}{2}-4\epsilon$, we have
\begin{eqnarray}
&&|I_{3}|\leq C\sum NN^{s+a-2}N_{1}^{-s}N_{2}^{-s}
\|P_{N_{1}}u_{1}P_{N_{2}}u_{2}\|_{L_{xT}^{2}}\|P_{N}h\|_{L_{xT}^{2}}\nonumber\\
&&\leq C\sum N_{1}^{-\epsilon}N^{s+a-1}N_{1}^{-2s-\frac{3-n}{2}+4\epsilon}
\|P_{N_{1}}u_{1}\|_{X_{0,b}}\|P_{N_{2}}u_{2}\|_{X_{0,b}}\|P_{N}h\|_{L_{xT}^{2}}\nonumber\\
&&\leq C\sum N_{1}^{-\epsilon}N_{1}^{a-s-1-\frac{3-n}{2}+4\epsilon}\|P_{N_{1}}u_{1}\|_{X_{0,b}}\|P_{N_{2}}u_{2}\|_{X_{0,b}}\|P_{N}h\|_{L_{xT}^{2}}\nonumber\\
&&\leq C\sum N_{1}^{-\epsilon}\|P_{N_{1}}u_{1}\|_{X_{0,b}}\|P_{N_{2}}u_{2}\|_{X_{0,b}}\|P_{N}h\|_{L_{xT}^{2}}\nonumber\\
&&\leq C\|h\|_{L_{xT}^{2}}\prod_{j=1}^{2}\|u_{j}\|_{X_{0,b}}.\label{3.038}
\end{eqnarray}
{\bf Case 4}: $N_{1}\geq 10$, $N_{1}\sim N_{2}\sim N$, by using H\"{o}lder inequality and \eqref{2.02}, $s\geq\frac{n-5}{2}+4\epsilon$, $0\leq a\leq \min\{1-2\epsilon,s+\frac{5-n}{2}-4\epsilon\}$, we have
\begin{eqnarray}
&&|I_{3}|\leq C\sum NN^{s+a-2}N_{1}^{-s}N_{2}^{-s}
\|P_{N_{1}}u_{1}P_{N_{2}}u_{2}\|_{L_{xT}^{2}}\|P_{N}h\|_{L_{xT}^{2}}\nonumber\\
&&\leq C\sum N_{1}^{-\epsilon}N_{1}^{a-1-s-\frac{3-n}{2}+4\epsilon}
\|P_{N_{1}}u_{1}\|_{X_{0,b}}\|P_{N_{2}}u_{2}\|_{X_{0,b}}\|P_{N}h\|_{L_{xT}^{2}}\nonumber\\
&&\leq C\sum N_{1}^{-\epsilon}\|P_{N_{1}}u_{1}\|_{X_{0,b}}\|P_{N_{2}}u_{2}\|_{X_{0,b}}\|P_{N}h\|_{L_{xT}^{2}}\nonumber\\
&&\leq C\|h\|_{L_{xT}^{2}}\prod_{j=1}^{2}\|u_{j}\|_{X_{0,b}}.\label{3.039}
\end{eqnarray}

This completes the proof of Lemma 3.3.

\begin{Lemma}\label{Lemma3.4}
Let $n=1$, $0<T<1$, $0<\epsilon\ll1$, $s\geq-\frac{1}{6}+\frac{\epsilon}{2}$, $b_{0}=-\frac{1}{2}+\frac{\epsilon}{12}$, $b=\frac{1}{2}+\frac{\epsilon}{24}$ and
\begin{eqnarray*}
&&0\leq a\leq \min\{1-2\epsilon,s+\frac{5}{4}-\epsilon,2s+\frac{3}{2}-\epsilon\}=1-2\epsilon.
\end{eqnarray*}
Then, we have
\begin{eqnarray}
&&\left\|\eta\left(\frac{t}{T}\right)\mathscr{F}_{xt}^{-1}\left(\frac{|\xi|^{2}\mathscr{F}_{xt}(u^{3})}{2i\phi_{\pm}(\xi)}\right)\right\|_{X_{s+a,b_{0}}}\leq CT^{-b_{0}}\|u\|_{X_{s,b}}^{3}.\label{3.040}
\end{eqnarray}
\end{Lemma}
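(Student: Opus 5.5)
Following the scheme of Lemma 3.1, I will use \eqref{3.02} and \cite[Lemma 2.11]{T2006} to reduce \eqref{3.040} to
\begin{eqnarray*}
&&\left\||D|\langle D\rangle^{s+a-2}(u^{3})\right\|_{L_{xT}^{2}}\leq C\|u\|_{X_{s,b}}^{3}.
\end{eqnarray*}
By duality this becomes
\begin{eqnarray*}
&&|I_{4}|:=\left|\int_{0}^{T}\int_{\SR}|D|\langle D\rangle^{s+a-2}\Big(\prod_{j=1}^{3}\langle D\rangle^{-s}u_{j}\Big)\overline{h}\,dxdt\right|\leq C\|h\|_{L_{xT}^{2}}\prod_{j=1}^{3}\|u_{j}\|_{X_{0,b}}.
\end{eqnarray*}
I will decompose $h$ and each $u_{j}$ dyadically, assume by symmetry $N_{1}\geq N_{2}\geq N_{3}$ (so $N\lesssim N_{1}$), and put the trilinear product in $L_{xT}^{2}$ via H\"older. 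The dyadic multiplier is then $N\langle N\rangle^{s+a-2}\prod_{j}\langle N_{j}\rangle^{-s}$, and it remains to bound each case by $CN_{1}^{-\epsilon}$ (or a summable power) times $\|P_{N}h\|_{L^{2}}\prod_{j}\|P_{N_{j}}u_{j}\|_{X_{0,b}}$. The summation in $N$ is harmless, since either $N\sim N_{1}$ or the factor $N\langle N\rangle^{s+a-2}$ with $s+a-1<0$ gives geometric decay or a bounded number of terms.

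\textbf{Case 1} ($N_{1}\leq10$). Bernstein and $\|P_{N_{j}}u_{j}\|_{L^{\infty}_{xT}}\lesssim N_{j}^{1/2}\|P_{N_{j}}u_{j}\|_{X_{0,b}}$ (the embedding $X_{0,b}\subset L_{t}^{\infty}L_{x}^{2}$) will suffice.

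\textbf{Case 2} ($N_{1}\geq10$, $N_{2}\lesssim1$). Here $N\sim N_{1}$. I will put $u_{2}$ and $u_{3}$ in $L^{\infty}$ by Bernstein and obtain the factor $N_{1}^{a-1}$, which is fine since $a\leq 1-2\epsilon$.

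\textbf{Case 3} ($N_{2}\gg1$). This is the main case. I will split according to whether $N_{3}\lesssim1$ or $N_{3}\gg1$, and according to whether $N\sim N_{1}$ or $N\ll N_{1}$ (the latter forcing $N_{1}\sim N_{2}$).
\begin{itemize}
\item When $N_{1}\gg N_{2}$, we have $N\sim N_{1}$. I will use \eqref{2.04} or \eqref{2.03}, which gives the factor $N_{1}^{a-1-1/4}N_{2}^{-s-1/4}N_{3}^{-s}$ with $N_{3}^{-s}\lesssim N_{2}^{1/6}$. This is fine for $s\geq-1/6$ and $a\leq 1$.
\item When $N_{1}\sim N_{2}$, the weight is $N^{s+a-1}N_{1}^{-2s}N_{3}^{-s}$ with $N\lesssim N_{1}$. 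With \eqref{2.04} this gives at worst $N_{1}^{\max(s+a-1,0)-2s-1/2}N_{3}^{-s}$, and $N_{3}^{-s}\leq N_{1}^{1/6}$. Using $a\leq1-2\epsilon$ and $s\geq-\frac16+\frac{\epsilon}{2}$, the exponent is $a-s-\frac32+\frac16$ or $-2s-\frac13$. Both are negative with room $\epsilon$, and this is where the constraints $s+\frac54$ and $2s+\frac32$ in the statement come from.
\item When $N_{3}$ is comparable to $N_{2}$, I will use \eqref{2.05} instead, which gives $N_{3}^{1/3}N_{1}^{-1/6}N_{2}^{-1/6}$ and thereby balances the three negative powers $N_{j}^{-s}$. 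In the fully balanced case $N_{1}\sim N_{2}\sim N_{3}$ this yields $N^{s+a-1}N_{1}^{-3s}$ or $N_{1}^{a-1-2s}$, fine for $s>-\frac16$.
\end{itemize}

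The main obstacle is the high--high--high regime $N_{1}\sim N_{2}\sim N_{3}\gg1$ at the endpoint $s\approx-\frac16$. Here one must check that \eqref{2.05} alone yields a negative exponent $a-1-2s\cdot$(appropriate count) after accounting for $N\lesssim N_{1}$, and verify that the $\epsilon$-margins in $s$ and $a$ produce the extra $N_{1}^{-\epsilon}$ needed to sum the dyadic pieces. If necessary, I would interpolate between \eqref{2.04} and \eqref{2.05} in that regime.
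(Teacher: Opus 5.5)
Your reduction, dyadic case split and use of Lemma 2.2 follow the paper. However, your third bullet and your ``main obstacle'' paragraph assign \eqref{2.05} to exactly the regime where it fails. When $N_1\sim N_2\sim N_3\gg1$, \eqref{2.05} gives $N_3^{1/3}N_1^{-1/6}N_2^{-1/6}\sim 1$, i.e.\ no decay at all, so you are left with the bare weight: $N_1^{a-1-2s}$ for $N\sim N_1$ and $N_1^{-3s}$ for $N\lesssim 1$. At $s=-\frac16+\frac{\epsilon}{2}$, $a=1-2\epsilon$ these exponents are $\frac13-3\epsilon$ and $\frac12-\frac{3\epsilon}{2}$, both positive. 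So the claim ``fine for $s>-\frac16$'' is false, and interpolating toward \eqref{2.05} can only weaken \eqref{2.04} there.

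The correct tool in that regime is \eqref{2.04}, as in the paper's Cases 4--5. It gives $N_1^{a-\frac32-2s}$ and $N_1^{-\frac12-3s}$, and this is where the constraint $a\le 2s+\frac32-\epsilon$ and the threshold $s>-\frac16$ come from. Your own second bullet already does this for all $N_3\le N_2$, so the third bullet should simply be dropped.

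Conversely, the subcase where \eqref{2.05} is genuinely needed is one your second bullet passes over: $N_1\sim N_2\gg N_3$ with $N_3\lesssim 1$ and $N\ll N_1$ (the paper's Case 3$A_{1}$).
\begin{itemize}
\item With \eqref{2.03}, the weight leaves $N_1^{-2s-\frac14}$, attained at $N\sim 1$ and worst whenever $s+a-1<0$, in particular for all $s<0$. This exponent is positive, so the bound grows, for $s<-\frac18$.
\item With \eqref{2.04}, you get the right $N_1^{-\frac12}$ but no positive power of $N_3$. So with the homogeneous decomposition $N_3\in 2^{\mathbf{Z}}$, the sum over the pieces $N_3\to 0$ is not controlled by $\|u_3\|_{X_{0,b}}$.
\item With \eqref{2.05}, you get $N_1^{-2s-\frac13}N_3^{\frac13}$, which is summable in both variables precisely because $s>-\frac16$.
\end{itemize}
So the role of \eqref{2.05} is to handle a very low third frequency, not to balance three comparable ones. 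With that reallocation your plan coincides with the paper's proof.
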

\noindent {\bf Proof.}
Inspired by \cite{ET2025}, we prove Lemma 3.4.
From \eqref{3.02}, to prove \eqref{3.040}, it suffices to show that
\begin{eqnarray}
&&\left\|\eta\left(\frac{t}{T}\right)|D|\langle D\rangle^{-2}(u^{3})\right\|_{X_{s+a,b_{0}}}\leq CT^{-b_{0}}\|u\|_{X_{s,b}}^{3}.\label{3.041}
\end{eqnarray}
By using \cite[Lemma 2.11]{T2006}, we have
\begin{eqnarray}
&&\left\|\eta\left(\frac{t}{T}\right)|D|\langle D\rangle^{-2}(u^{3})\right\|_{X_{s+a,b_{0}}}\leq CT^{-b_{0}}
\left\|\eta\left(\frac{t}{T}\right)|D|\langle D\rangle^{-2}(u^{3})\right\|_{X_{s+a,0}}\nonumber\\
&&=CT^{-b_{0}}\left\||D|\langle D\rangle^{-2}(u^{3})\right\|_{L_{T}^{2}H_{x}^{s+a}}.\label{3.042}
\end{eqnarray}
By using \eqref{3.042}, to prove \eqref{3.040}, it suffices to show that
\begin{eqnarray}
&&\left\||D|\langle D\rangle^{s+a-2}(u^{3})\right\|_{L_{xT}^{2}}\leq C\|u\|_{X_{s,b}}^{3}.\label{3.043}
\end{eqnarray}
According to the duality idea, to prove \eqref{3.043}, we only need to prove
\begin{eqnarray}
&&\left|\int_{0}^{T}\int_{\SR}|D|\langle D\rangle^{s+a-2}\left(\langle D\rangle^{-s} u_{1}\langle D\rangle^{-s} u_{2}\langle D\rangle^{-s} u_{3}\right)(x,t)
\overline{h}(x,t)dxdt\right|\nonumber\\
&&\leq C\|h\|_{L_{xT}^{2}}\prod_{j=1}^{3}\|u_{j}\|_{X_{0,b}}.\label{3.044}
\end{eqnarray}
We denote
\begin{eqnarray}
&&I_{4}:=
\int_{0}^{T}\int_{\SR}|D|\langle D\rangle^{s+a-2}\left(\langle D\rangle^{-s} u_{1}\langle D\rangle^{-s} u_{2}\langle D\rangle^{-s} u_{3}\right)(x,t)\overline{h}(x,t)dxdt.\label{3.045}
\end{eqnarray}
Let $P_{N}h$ and $P_{N_{j}}u_{j}(j=1,2,3)$ be the dyadic decompositions of $h$ and $u_{j}$, respectively.
Then, we have that $\supp \mathscr{F}_{x}P_{N}h\subset\{\xi\in\R:|\xi|\sim N\}$, and $\supp \mathscr{F}_{x}P_{N_{j}}u_{j}\subset \{\xi\in\R:|\xi|\sim N_{j}\}$, where $N$ and $N_{j}$ are dyadic numbers. We define $\sum=\sum\limits_{N_{1}, N_{2}, N_{3}, N}$. Without loss of generality, we assume $N_{1}\geq N_{2}\geq N_{3}$.

\noindent We consider the following cases: {\bf Case 1}: $N_{1}\leq 10$; {\bf Case
2}: $N_{1}\geq 10$, $N_{1}\gg N_{2}\geq N_{3}$ and {\bf Case 3}: $N_{1}\geq 10$, $N_{1}\sim N_{2}\gg N_{3}$; {\bf Case 4}: $N_{1}\geq 10$, $N_{1}\sim N_{2}\sim N_{3}\gg N$; {\bf Case 5}: $N_{1}\geq 10$, $N_{1}\sim N_{2}\sim N_{3}\sim N$.

\noindent {\bf Case 1}: When $N_{1}\leq 10$, by using H\"{o}lder inequality and Bernstein inequality, we have
\begin{eqnarray}
&&|I_{4}|\leq C\sum N\langle N\rangle^{s+a-2}\langle N_{1}\rangle^{-s}\langle N_{2}\rangle^{-s}\langle N_{3}\rangle^{-s}
\|P_{N_{1}}u_{1}\|_{L_{xT}^{2}}\|P_{N}h\|_{L_{xT}^{2}}\prod_{j=2}^{3}\|P_{N_{j}}u_{j}\|_{L_{xT}^{\infty}}\nonumber\\
&&\leq C\sum NN_{2}^{\frac{1}{2}}N_{3}^{\frac{1}{2}}\langle N_{1}\rangle^{-s}\langle N_{2}\rangle^{-s}
\|P_{N}h\|_{L_{xT}^{2}}\prod_{j=1}^{3}\|P_{N_{j}}u_{j}\|_{X_{0,b}}\nonumber\\
&&\leq C\sum NN_{1}^{\frac{1}{4}}N_{2}^{\frac{1}{4}}N_{3}^{\frac{1}{2}}\|P_{N}h\|_{L_{xT}^{2}}\prod_{j=1}^{3}\|P_{N_{j}}u_{j}\|_{X_{0,b}}\nonumber\\
&&\leq C\|h\|_{L_{xT}^{2}}\prod_{j=1}^{3}\|u_{j}\|_{X_{0,b}}.\label{3.046}
\end{eqnarray}
{\bf Case 2}: When $N_{1}\geq 10$,  $N_{1}\gg N_{2}\geq N_{3}$, we consider the following cases: {\bf Case 2A}: $N_{2}\leq 8$ and {\bf Case
2B}: $N_{2}\geq 8$.

\noindent {\bf Case 2A}: $N_{1}\geq 10$,  $N_{1}\gg N_{2}\geq N_{3}$, $N_{2}\leq 8$, we have $N_{1}\sim N$.
By using H\"{o}lder inequality and Bernstein inequality, $0\leq a\leq 1-2\epsilon$, we have
\begin{eqnarray}
&&|I_{4}|\leq C\sum NN^{s+a-2}N_{1}^{-s}\langle N_{2}\rangle^{-s}\langle N_{3}\rangle^{-s}
\|P_{N_{1}}u_{1}\|_{L_{xT}^{2}}\|P_{N}h\|_{L_{xT}^{2}}\prod_{j=2}^{3}\|P_{N_{j}}u_{j}\|_{L_{xT}^{\infty}}\nonumber\\
&&\leq C\sum N^{a-1}N_{2}^{\frac{1}{2}}N_{3}^{\frac{1}{2}}\langle N_{2}\rangle^{-s}\langle N_{3}\rangle^{-s}
\|P_{N}h\|_{L_{xT}^{2}}\prod_{j=1}^{3}\|P_{N_{j}}u_{j}\|_{X_{0,b}}\nonumber\\
&&\leq C\sum N_{1}^{a-1}N_{2}^{\frac{1}{2}}N_{3}^{\frac{1}{2}}\|P_{N}h\|_{L_{xT}^{2}}\prod_{j=1}^{3}\|P_{N_{j}}u_{j}\|_{X_{0,b}}\nonumber\\
&&\leq C\|h\|_{L_{xT}^{2}}\prod_{j=1}^{3}\|u_{j}\|_{X_{0,b}}.\label{3.047}
\end{eqnarray}
{\bf Case 2B}: $N_{1}\geq 10$,  $N_{1}\gg N_{2}$, $N_{2}\geq 8$,  we consider the following  cases: {\bf Case 2$B_{1}$}: $N_{3}\leq 4$ and {\bf Case
2$B_{2}$}: $N_{3}\geq 4$.

\noindent{\bf Case 2$B_{1}$}: $N_{3}\leq 4$, we have $N\sim N_{1}$.
By using H\"{o}lder inequality and \eqref{2.03}, $s\geq-\frac{1}{6}+\frac{\epsilon}{2}$, $0\leq a\leq \min\{1-2\epsilon,s+\frac{5}{4}-\epsilon\}$, we have
\begin{eqnarray}
&&|I_{4}|\leq C\sum NN^{s+a-2}N_{1}^{-s}N_{2}^{-s}\langle N_{3}\rangle^{-s}
\|P_{N_{1}}u_{1}P_{N_{2}}u_{2}P_{N_{3}}u_{3}\|_{L_{xT}^{2}}\|P_{N}h\|_{L_{xT}^{2}}\nonumber\\
&&\leq C\sum N_{1}^{-\epsilon}N_{1}^{a-1+\epsilon}N_{1}^{-\frac{1}{4}}N_{2}^{-s}N_{3}^{\frac{1}{2}}
\|P_{N}h\|_{L_{xT}^{2}}\prod_{j=1}^{3}\|P_{N_{j}}u_{j}\|_{X_{0,b}}\nonumber\\
&&\leq C\sum N_{1}^{-\epsilon} N_{2}^{-s+a-\frac{5}{4}+\epsilon}N_{3}^{\frac{1}{2}}\|P_{N}h\|_{L_{xT}^{2}}\prod_{j=1}^{3}\|P_{N_{j}}u_{j}\|_{X_{0,b}}\nonumber\\
&&\leq C\|h\|_{L_{xT}^{2}}\prod_{j=1}^{3}\|u_{j}\|_{X_{0,b}}.\label{3.048}
\end{eqnarray}
{\bf Case 2$B_{2}$}: $N_{3}\geq 4$, we have $N\sim N_{1}$, $N_{2}\geq N_{3}\geq4$.
By using H\"{o}lder inequality and \eqref{2.04}, $s\geq-\frac{1}{6}+\frac{\epsilon}{2}$, $0\leq a\leq \min\{1-2\epsilon,2s+\frac{3}{2}-\epsilon\}$, we have
\begin{eqnarray}
&&|I_{4}|\leq C\sum NN^{s+a-2}N_{1}^{-s}N_{2}^{-s}N_{3}^{-s}
\|P_{N_{1}}u_{1}P_{N_{2}}u_{2}P_{N_{3}}u_{3}\|_{L_{xT}^{2}}\|P_{N}h\|_{L_{xT}^{2}}\nonumber\\
&&\leq C\sum N_{1}^{-\epsilon}N_{1}^{a-1+\epsilon}N_{1}^{-\frac{1}{4}}N_{2}^{-\frac{1}{4}}N_{2}^{-s}N_{3}^{-s}
\|P_{N}h\|_{L_{xT}^{2}}\prod_{j=1}^{3}\|P_{N_{j}}u_{j}\|_{X_{0,b}}\nonumber\\
&&\leq C\sum N_{1}^{-\epsilon} N_{3}^{-2s+a-\frac{3}{2}+\epsilon}\|P_{N}h\|_{L_{xT}^{2}}\prod_{j=1}^{3}\|P_{N_{j}}u_{j}\|_{X_{0,b}}\nonumber\\
&&\leq C\|h\|_{L_{xT}^{2}}\prod_{j=1}^{3}\|u_{j}\|_{X_{0,b}}.\label{3.049}
\end{eqnarray}
{\bf Case 3}: $N_{1}\geq 10$, $N_{1}\sim N_{2}\gg N_{3}$, we consider the following cases: {\bf Case 3A}: $N_{1}\gg N$ and {\bf Case
3B}: $N_{1}\sim N$.

\noindent {\bf Case 3A}: $N_{1}\geq 10$, $N_{1}\sim N_{2}\gg N_{3}$, $N_{1}\gg N$, we consider the following cases: {\bf Case 3$A_{1}$}: $N_{3}\leq 4$ and {\bf Case
3$A_{2}$}: $N_{3}\geq 4$.

\noindent{\bf Case 3$A_{1}$}: $N_{3}\leq 4$, when $N\leq 4$, by using H\"{o}lder inequality and \eqref{2.05}, $s\geq-\frac{1}{6}+\frac{\epsilon}{2}$, we have
\begin{eqnarray}
&&|I_{4}|\leq C\sum N\langle N\rangle^{s+a-2}N_{1}^{-s}N_{2}^{-s}\langle N_{3}\rangle^{-s}
\|P_{N_{1}}u_{1}P_{N_{2}}u_{2}P_{N_{3}}u_{3}\|_{L_{xT}^{2}}\|P_{N}h\|_{L_{xT}^{2}}\nonumber\\
&&\leq C\sum NN_{1}^{-\epsilon}N_{1}^{-\frac{1}{3}-2s+\epsilon}N_{3}^{\frac{1}{3}}
\|P_{N}h\|_{L_{xT}^{2}}\prod_{j=1}^{3}\|P_{N_{j}}u_{j}\|_{X_{0,b}}\nonumber\\
&&\leq C\sum NN_{1}^{-\epsilon}N_{3}^{\frac{1}{3}}\|P_{N}h\|_{L_{xT}^{2}}\prod_{j=1}^{3}\|P_{N_{j}}u_{j}\|_{X_{0,b}}\nonumber\\
&&\leq C\|h\|_{L_{xT}^{2}}\prod_{j=1}^{3}\|u_{j}\|_{X_{0,b}}.\label{3.050}
\end{eqnarray}
When $N\geq 4$, $-\frac{1}{6}+\frac{\epsilon}{2}\leq s\leq 0$, by using H\"{o}lder inequality and \eqref{2.05}, $0\leq a\leq 1-2\epsilon$,  we have
\begin{eqnarray}
&&|I_{4}|\leq C\sum N\langle N\rangle^{s+a-2}N_{1}^{-s}N_{2}^{-s}\langle N_{3}\rangle^{-s}
\|P_{N_{1}}u_{1}P_{N_{2}}u_{2}P_{N_{3}}u_{3}\|_{L_{xT}^{2}}\|P_{N}h\|_{L_{xT}^{2}}\nonumber\\
&&\leq C\sum N_{1}^{-\epsilon}N^{s+a-1}N_{1}^{-\frac{1}{3}-2s+\epsilon}N_{3}^{\frac{1}{3}}\|P_{N}h\|_{L_{xT}^{2}}
\prod_{j=1}^{3}\|P_{N_{j}}u_{j}\|_{X_{0,b}}\nonumber\\
&&\leq C\sum N_{1}^{-\epsilon}N_{1}^{-\frac{1}{3}-2s+\epsilon}N_{3}^{\frac{1}{3}}\|P_{N}h\|_{L_{xT}^{2}}
\prod_{j=1}^{3}\|P_{N_{j}}u_{j}\|_{X_{0,b}}\nonumber\\
&&\leq C\sum N_{1}^{-\epsilon}N_{3}^{\frac{1}{3}}\|P_{N}h\|_{L_{xT}^{2}}\prod_{j=1}^{3}\|P_{N_{j}}u_{j}\|_{X_{0,b}}\nonumber\\
&&\leq C\|h\|_{L_{xT}^{2}}\prod_{j=1}^{3}\|u_{j}\|_{X_{0,b}}.\label{3.051}
\end{eqnarray}
When $N\geq 4$, $s\geq 0$, by using H\"{o}lder inequality and \eqref{2.05}, $0\leq a\leq 1-2\epsilon$,  we have
\begin{eqnarray}
&&|I_{4}|\leq C\sum NN^{s+a-2}N_{1}^{-s}N_{2}^{-s}\langle N_{3}\rangle^{-s}
\|P_{N_{1}}u_{1}P_{N_{2}}u_{2}P_{N_{3}}u_{3}\|_{L_{xT}^{2}}\|P_{N}h\|_{L_{xT}^{2}}\nonumber\\
&&\leq C\sum N_{1}^{-\epsilon}N^{a-1}N_{1}^{-\frac{1}{3}-s+\epsilon}N_{3}^{\frac{1}{3}}\|P_{N}h\|_{L_{xT}^{2}}
\prod_{j=1}^{3}\|P_{N_{j}}u_{j}\|_{X_{0,b}}\nonumber\\
&&\leq C\sum N_{1}^{-\epsilon}N_{1}^{-\frac{1}{3}-s+\epsilon}N_{3}^{\frac{1}{3}}\|P_{N}h\|_{L_{xT}^{2}}
\prod_{j=1}^{3}\|P_{N_{j}}u_{j}\|_{X_{0,b}}\nonumber\\
&&\leq C\sum N_{1}^{-\epsilon}N_{3}^{\frac{1}{3}}\|P_{N}h\|_{L_{xT}^{2}}\prod_{j=1}^{3}\|P_{N_{j}}u_{j}\|_{X_{0,b}}\nonumber\\
&&\leq C\|h\|_{L_{xT}^{2}}\prod_{j=1}^{3}\|u_{j}\|_{X_{0,b}}.\label{3.052}
\end{eqnarray}

\noindent{\bf Case 3$A_{2}$}: $N_{3}\geq 4$, when $N\leq 4$, by using H\"{o}lder inequality and \eqref{2.04}, $s\geq -\frac{1}{6}+\frac{\epsilon}{2}$, we have
\begin{eqnarray}
&&|I_{4}|\leq C\sum N\langle N\rangle^{s+a-2}N_{1}^{-s}N_{2}^{-s} N_{3}^{-s}
\|P_{N_{1}}u_{1}P_{N_{2}}u_{2}P_{N_{3}}u_{3}\|_{L_{xT}^{2}}\|P_{N}h\|_{L_{xT}^{2}}\nonumber\\
&&\leq C\sum NN_{1}^{-\epsilon}N_{1}^{-\frac{1}{2}-2s+\epsilon}N_{3}^{-s}\|P_{N}h\|_{L_{xT}^{2}}
\prod_{j=1}^{3}\|P_{N_{j}}u_{j}\|_{X_{0,b}}\nonumber\\
&&\leq C\sum NN_{1}^{-\epsilon}\|P_{N}h\|_{L_{xT}^{2}}\prod_{j=1}^{3}\|P_{N_{j}}u_{j}\|_{X_{0,b}}
\leq C\|h\|_{L_{xT}^{2}}\prod_{j=1}^{3}\|u_{j}\|_{X_{0,b}}.\label{3.053}
\end{eqnarray}
When $N\geq 4$, $-\frac{1}{6}+\frac{\epsilon}{2}\leq s\leq 0$, by using H\"{o}lder inequality and \eqref{2.04}, $0\leq a\leq 1-2\epsilon$,  we have
\begin{eqnarray}
&&|I_{4}|\leq C\sum NN^{s+a-2}N_{1}^{-s}N_{2}^{-s}N_{3}^{-s}
\|P_{N_{1}}u_{1}P_{N_{2}}u_{2}P_{N_{3}}u_{3}\|_{L_{xT}^{2}}\|P_{N}h\|_{L_{xT}^{2}}\nonumber\\
&&\leq C\sum N_{1}^{-\epsilon}N^{s+a-1}N_{1}^{-\frac{1}{2}-2s+\epsilon}N_{3}^{-s}\|P_{N}h\|_{L_{xT}^{2}}
\prod_{j=1}^{3}\|P_{N_{j}}u_{j}\|_{X_{0,b}}\nonumber\\
&&\leq C\sum N_{1}^{-\epsilon}N_{1}^{-\frac{1}{2}-3s+\epsilon}
\|P_{N}h\|_{L_{xT}^{2}}\prod_{j=1}^{3}\|P_{N_{j}}u_{j}\|_{X_{0,b}}\nonumber\\
&&\leq C\sum N_{1}^{-\epsilon}\|P_{N}h\|_{L_{xT}^{2}}\prod_{j=1}^{3}\|P_{N_{j}}u_{j}\|_{X_{0,b}}
\leq C\|h\|_{L_{xT}^{2}}\prod_{j=1}^{3}\|u_{j}\|_{X_{0,b}}.\label{3.054}
\end{eqnarray}
When $N\geq 4$, $s\geq 0$, by using H\"{o}lder inequality and \eqref{2.04}, $0\leq a\leq 1-2\epsilon$,  we have
\begin{eqnarray}
&&|I_{4}|\leq C\sum NN^{s+a-2}N_{1}^{-s}N_{2}^{-s}N_{3}^{-s}
\|P_{N_{1}}u_{1}P_{N_{2}}u_{2}P_{N_{3}}u_{3}\|_{L_{xT}^{2}}\|P_{N}h\|_{L_{xT}^{2}}\nonumber\\
&&\leq C\sum N_{1}^{-\epsilon}N^{s+a-1}N_{1}^{-\frac{1}{2}-2s+\epsilon}N_{3}^{-s}\|P_{N}h\|_{L_{xT}^{2}}
\prod_{j=1}^{3}\|P_{N_{j}}u_{j}\|_{X_{0,b}}\nonumber\\
&&\leq C\sum N_{1}^{-\epsilon}N_{1}^{-\frac{1}{2}-s+\epsilon}
\|P_{N}h\|_{L_{xT}^{2}}\prod_{j=1}^{3}\|P_{N_{j}}u_{j}\|_{X_{0,b}}\nonumber\\
&&\leq C\sum N_{1}^{-\epsilon}\|P_{N}h\|_{L_{xT}^{2}}\prod_{j=1}^{3}\|P_{N_{j}}u_{j}\|_{X_{0,b}}
\leq C\|h\|_{L_{xT}^{2}}\prod_{j=1}^{3}\|u_{j}\|_{X_{0,b}}.\label{3.055}
\end{eqnarray}
{\bf Case 3B}: $N_{1}\sim N_{2}\sim N$, when $N_{3}\leq 4$,
by using H\"{o}lder inequality and \eqref{2.03}, $s\geq-\frac{1}{6}+\frac{\epsilon}{2}$, $0\leq a\leq s+\frac{5}{4}-\epsilon$, we have
\begin{eqnarray}
&&|I_{4}|\leq C\sum NN^{s+a-2}N_{1}^{-s}N_{2}^{-s}\langle N_{3}\rangle^{-s}
\|P_{N_{1}}u_{1}P_{N_{2}}u_{2}P_{N_{3}}\|_{L_{xT}^{2}}\|P_{N}h\|_{L_{xT}^{2}}\nonumber\\
&&\leq C\sum N_{1}^{-\epsilon}N^{s+a-1}N_{1}^{-\frac{1}{4}-2s+\epsilon}N_{3}^{\frac{1}{2}}\|P_{N}h\|_{L_{xT}^{2}}
\prod_{j=1}^{3}\|P_{N_{j}}u_{j}\|_{X_{0,b}}\nonumber\\
&&\leq C\sum N_{1}^{-\epsilon}N_{1}^{a-s-1-\frac{1}{4}+\epsilon}N_{3}^{\frac{1}{2}}\|P_{N}h\|_{L_{xT}^{2}}\prod_{j=1}^{3}\|P_{N_{j}}u_{j}\|_{X_{0,b}}\nonumber\\
&&\leq C\sum N_{1}^{-\epsilon}N_{3}^{\frac{1}{2}}\|P_{N}h\|_{L_{xT}^{2}}\prod_{j=1}^{3}\|P_{N_{j}}u_{j}\|_{X_{0,b}}\nonumber\\
&&\leq C\|h\|_{L_{xT}^{2}}\prod_{j=1}^{3}\|u_{j}\|_{X_{0,b}}.\label{3.056}
\end{eqnarray}
When $N_{3}\geq 4$,
by using H\"{o}lder inequality and \eqref{2.04}, $s\geq-\frac{1}{6}+\frac{\epsilon}{2}$, $0\leq a\leq \{s+\frac{3}{2}-\epsilon, 2s+\frac{3}{2}-\epsilon\}$, we have
\begin{eqnarray}
&&|I_{4}|\leq C\sum NN^{s+a-2}N_{1}^{-s}N_{2}^{-s}N_{3}^{-s}
\|P_{N_{1}}u_{1}P_{N_{2}}u_{2}P_{N_{3}}\|_{L_{xT}^{2}}\|P_{N}h\|_{L_{xT}^{2}}\nonumber\\
&&\leq C\sum N_{1}^{-\epsilon}N^{s+a-1}N_{1}^{-\frac{1}{2}-2s+\epsilon}N_{3}^{-s}\|P_{N}h\|_{L_{xT}^{2}}
\prod_{j=1}^{3}\|P_{N_{j}}u_{j}\|_{X_{0,b}}\nonumber\\
&&\leq C\sum N_{1}^{-\epsilon}N_{1}^{a-s-1-\frac{1}{2}+\epsilon}N_{3}^{-s}\|P_{N}h\|_{L_{xT}^{2}}\prod_{j=1}^{3}\|P_{N_{j}}u_{j}\|_{X_{0,b}}\nonumber\\
&&\leq C\sum N_{1}^{-\epsilon}\|P_{N}h\|_{L_{xT}^{2}}\prod_{j=1}^{3}\|P_{N_{j}}u_{j}\|_{X_{0,b}}\nonumber\\
&&\leq C\|h\|_{L_{xT}^{2}}\prod_{j=1}^{3}\|u_{j}\|_{X_{0,b}}.\label{3.057}
\end{eqnarray}
{\bf Case 4}: $N_{1}\geq 10$, $N_{1}\sim N_{2}\sim N_{3}\gg N$, when $N\leq 4$, by using H\"{o}lder inequality and \eqref{2.04}, $s\geq-\frac{1}{6}+\frac{\epsilon}{2}$, we have
\begin{eqnarray}
&&|I_{4}|\leq C\sum N\langle N\rangle^{s+a-2}N_{1}^{-s}N_{2}^{-s}N_{3}^{-s}
\|P_{N_{1}}u_{1}P_{N_{2}}u_{2}P_{N_{3}}\|_{L_{xT}^{2}}\|P_{N}h\|_{L_{xT}^{2}}\nonumber\\
&&\leq C\sum NN_{1}^{-\epsilon}N_{1}^{-\frac{1}{2}-3s+\epsilon}\|P_{N}h\|_{L_{xT}^{2}}
\prod_{j=1}^{3}\|P_{N_{j}}u_{j}\|_{X_{0,b}}\nonumber\\
&&\leq C\sum NN_{1}^{-\epsilon}\|P_{N}h\|_{L_{xT}^{2}}\prod_{j=1}^{3}\|P_{N_{j}}u_{j}\|_{X_{0,b}}\nonumber\\
&&\leq C\|h\|_{L_{xT}^{2}}\prod_{j=1}^{3}\|u_{j}\|_{X_{0,b}}.\label{3.058}
\end{eqnarray}
When $N\geq 4$, by using H\"{o}lder inequality and \eqref{2.04}, $0\leq a\leq 1-2\epsilon$, $s\geq -\frac{1}{6}+\frac{\epsilon}{2}$, we have
\begin{eqnarray}
&&|I_{4}|\leq C\sum NN^{s+a-2}N_{1}^{-s}N_{2}^{-s}N_{3}^{-s}
\|P_{N_{1}}u_{1}P_{N_{2}}u_{2}P_{N_{3}}\|_{L_{xT}^{2}}\|P_{N}h\|_{L_{xT}^{2}}\nonumber\\
&&\leq C\sum N_{1}^{-\epsilon}N^{s+a-1}N_{1}^{-\frac{1}{2}-3s+\epsilon}\|P_{N}h\|_{L_{xT}^{2}}
\prod_{j=1}^{3}\|P_{N_{j}}u_{j}\|_{X_{0,b}}\nonumber\\
&&\leq C\sum N_{1}^{-\epsilon}N^{s}N_{1}^{-\frac{1}{2}-3s+\epsilon}\|P_{N}h\|_{L_{xT}^{2}}
\prod_{j=1}^{3}\|P_{N_{j}}u_{j}\|_{X_{0,b}}\nonumber\\
&&\leq C\sum N_{1}^{-\epsilon}\|P_{N}h\|_{L_{xT}^{2}}\prod_{j=1}^{3}\|P_{N_{j}}u_{j}\|_{X_{0,b}}\nonumber\\
&&\leq C\|h\|_{L_{xT}^{2}}\prod_{j=1}^{3}\|u_{j}\|_{X_{0,b}}.\label{3.059}
\end{eqnarray}
{\bf Case 5}: $N_{1}\geq 10$, $N_{1}\sim N_{2}\sim N_{3}\sim N$, by using H\"{o}lder inequality and \eqref{2.04}, $s\geq-\frac{1}{6}+\frac{\epsilon}{2}$, $0\leq a\leq  2s+\frac{3}{2}-\epsilon$,  we have
\begin{eqnarray}
&&|I_{4}|\leq C\sum NN^{s+a-2}N_{1}^{-s}N_{2}^{-s}N_{3}^{-s}
\|P_{N_{1}}u_{1}P_{N_{2}}u_{2}P_{N_{3}}\|_{L_{xT}^{2}}\|P_{N}h\|_{L_{xT}^{2}}\nonumber\\
&&\leq C\sum N_{1}^{-\epsilon}N_{1}^{a-1-\frac{1}{2}-2s+\epsilon}\|P_{N}h\|_{L_{xT}^{2}}
\prod_{j=1}^{3}\|P_{N_{j}}u_{j}\|_{X_{0,b}}\nonumber\\
&&\leq C\sum N_{1}^{-\epsilon}\|P_{N}h\|_{L_{xT}^{2}}\prod_{j=1}^{3}\|P_{N_{j}}u_{j}\|_{X_{0,b}}\nonumber\\
&&\leq C\|h\|_{L_{xT}^{2}}\prod_{j=1}^{3}\|u_{j}\|_{X_{0,b}}.\label{3.060}
\end{eqnarray}

This completes the proof of Lemma 3.4.

\begin{Lemma}\label{Lemma3.5}
Let $2\leq n\leq 4$, $0<T<1$, $0<\epsilon\ll1$, $s\geq\frac{2n-3}{6}+\frac{4\epsilon}{3}$, $b_{0}=-\frac{1}{2}+\frac{\epsilon}{12}$, $b=\frac{1}{2}+\frac{\epsilon}{24}$ and
\begin{align*}
0\leq a&\leq \min\{1-2\epsilon,s+\frac{5-n}{2}-4\epsilon,2s+\frac{5-2n}{2}-4\epsilon\}\\
&=\min\{1-2\epsilon,2s+\frac{5-2n}{2}-4\epsilon\}.
\end{align*}
Then, we have
\begin{eqnarray}
&&\left\|\eta\left(\frac{t}{T}\right)\mathscr{F}_{xt}^{-1}\left(\frac{|\xi|^{2}\mathscr{F}_{xt}(u^{3})}{2i\phi_{\pm}(\xi)}\right)\right\|_{X_{s+a,b_{0}}}\leq CT^{-b_{0}}\|u\|_{X_{s,b}}^{3}.\label{3.061}
\end{eqnarray}
\end{Lemma}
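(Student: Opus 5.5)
We follow the scheme of Lemma 3.4, with \eqref{2.06} replacing \eqref{2.03}--\eqref{2.05}. By \eqref{3.02} and \cite[Lemma 2.11]{T2006}, as in \eqref{3.041}--\eqref{3.043}, it suffices to prove
\begin{eqnarray*}
&&\left\||D|\langle D\rangle^{s+a-2}(u^{3})\right\|_{L_{xT}^{2}}\leq C\|u\|_{X_{s,b}}^{3}.
\end{eqnarray*}
By duality this reduces to bounding
\begin{eqnarray*}
&&I_{5}:=\int_{0}^{T}\int_{\SR^{n}}|D|\langle D\rangle^{s+a-2}\left(\prod_{j=1}^{3}\langle D\rangle^{-s}u_{j}\right)\overline{h}\,dxdt
\end{eqnarray*}
by $C\|h\|_{L_{xT}^{2}}\prod_{j=1}^{3}\|u_{j}\|_{X_{0,b}}$. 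We dyadically decompose $h,u_{1},u_{2},u_{3}$ at frequencies $N,N_{1},N_{2},N_{3}$, assume $N_{1}\geq N_{2}\geq N_{3}$, and use the same case split as in Lemma 3.4. Each dyadic block is bounded by H\"older, followed either by Bernstein (low frequencies) or by \eqref{2.06} (when $N_{1}\gg1$). In every case we aim for a factor $N_{1}^{-\epsilon}$, possibly times bounded powers of low frequencies $\lesssim 1$, so the dyadic sum converges.

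\textbf{Low-frequency cases.}
\begin{itemize}
\item Case 1 ($N_{1}\leq 10$): Bernstein in $L^{\infty}$ on $u_{2},u_{3}$ gives a bounded factor.
\item Case 2A ($N_{1}\gg N_{2}$, $N_{2}\leq 8$): here $N\sim N_{1}$, and the factor is $N_{1}^{a-1}N_{2}^{n/2}N_{3}^{n/2}$, which is fine since $a\leq 1-2\epsilon$.
\end{itemize}

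\textbf{The case $N_{1}\sim N$, $N_{2}\geq 8$.} Applying \eqref{2.06} yields the multiplier
\begin{eqnarray*}
&&N_{1}^{a-1}N_{1}^{-\frac{1}{2}+\epsilon}N_{2}^{\frac{n}{2}-1+2\epsilon-s}N_{3}^{\frac{n}{2}}\langle N_{3}\rangle^{-s}.
\end{eqnarray*}
When $N_{1}\gg N_{2}$, we write $N_{1}^{a-\frac{3}{2}+\epsilon}\leq N_{1}^{-\epsilon}N_{2}^{a-\frac{3}{2}+2\epsilon}$ and use $N_{3}\leq N_{2}$. This reduces the condition to
\begin{eqnarray*}
&&a-\tfrac{3}{2}+\tfrac{n}{2}-1+\tfrac{n}{2}-2s+4\epsilon\leq 0,\quad\mbox{i.e.}\quad a\leq 2s+\tfrac{5-2n}{2}-4\epsilon,
\end{eqnarray*}
which is exactly the hypothesis on $a$. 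The same bound covers $N_{1}\sim N_{2}\sim N_{3}\sim N$ (Case 5) and $N_{1}\sim N_{2}\sim N$ with small $N_{3}$. When $s<0$, the factor $\langle N_{3}\rangle^{-s}$ is absorbed the same way because $N_{3}\leq N_{2}$.

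\textbf{The case $N_{1}\sim N_{2}\gg N$.} This comes from Case 3A with $N_{3}\ll N_{2}$, or from Case 4.
\begin{itemize}
\item If $N\leq 8$, the multiplier is $N_{1}^{-\frac{3}{2}+\frac{n}{2}-2s+3\epsilon}N_{3}^{\frac{n}{2}-s}$. Using $N_{3}\lesssim N_{1}$, this needs $n-\frac{3}{2}-3s<0$, which holds since $s\geq\frac{2n-3}{6}+\frac{4\epsilon}{3}$. This is where the threshold $s_{3}$ appears.
\item If $N\geq 8$, the multiplier gains $N^{s+a-1}$. We split into $s+a\leq 1$, where the gain is $\leq 1$, and $s+a>1$, where we bound $N^{s+a-1}\leq N_{1}^{s+a-1}$. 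In both subcases the conditions reduce to $3s>n-\frac{3}{2}$ or to $a\leq 2s+\frac{5-2n}{2}-4\epsilon$.
\end{itemize}

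\textbf{Main obstacle.} The delicate part is bookkeeping the $\epsilon$-losses so that every case simultaneously yields $N_{1}^{-\epsilon}$. The worst case is $N_{1}\sim N_{2}\sim N_{3}$ with $N\ll N_{1}$ and $N$ large, where the exponents $\frac{n}{2}$ and $\frac{n}{2}-1$ from \eqref{2.06} force $s>\frac{2n-3}{6}$. We must also check that $s\leq 1$-type restrictions, analogous to the Remark after Lemma 3.2, do not fail for $n\leq 4$. The condition $2\leq n\leq4$ guarantees $s_{3}<1$ in the relevant range, so the split $a\leq 1-s$ versus $a\geq 1-s$ covers all cases.
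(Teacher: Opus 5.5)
Your proposal is correct and is essentially the paper's proof. It uses the same reduction via \eqref{3.02}, the time-localization lemma and duality, and the same dyadic decomposition with Bernstein at low frequencies and \eqref{2.06} otherwise; it also makes the same split $a\leq 1-s$ versus $a>1-s$ when $N_{1}\sim N_{2}\gg N$, merely grouping the paper's Cases 2B, 3B, 5 (where $N\sim N_{1}$) and Cases 3A, 4 (where $N\ll N_{1}$). The only step to tidy is the regime $s>\frac{n}{2}$: there $N_{3}^{\frac{n}{2}-s}$ has a negative exponent, so bound it by $1$ rather than via $N_{3}\leq N_{2}$ (as in \eqref{3.070}, \eqref{3.077}, \eqref{3.082}), which leaves the harmless condition $a\leq s+\frac{5-n}{2}-4\epsilon$.
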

\noindent {\bf Proof.}
Inspired by \cite{ET2025}, we prove Lemma 3.5.
From \eqref{3.02}, to prove \eqref{3.061}, it suffices to show that
\begin{eqnarray}
&&\left\|\eta\left(\frac{t}{T}\right)|D|\langle D\rangle^{-2}(u^{3})\right\|_{X_{s+a,b_{0}}}\leq CT^{-b_{0}}\|u\|_{X_{s,b}}^{3}.\label{3.062}
\end{eqnarray}
By using \cite[Lemma 2.11]{T2006}, we have
\begin{eqnarray}
&&\left\|\eta\left(\frac{t}{T}\right)|D|\langle D\rangle^{-2}(u^{3})\right\|_{X_{s+a,b_{0}}}\leq CT^{-b_{0}}
\left\|\eta\left(\frac{t}{T}\right)|D|\langle D\rangle^{-2}(u^{3})\right\|_{X_{s+a,0}}\nonumber\\
&&=CT^{-b_{0}}\left\||D|\langle D\rangle^{-2}(u^{3})\right\|_{L_{T}^{2}H_{x}^{s+a}}.\label{3.063}
\end{eqnarray}
By using \eqref{3.063}, to prove \eqref{3.061}, it suffices to show that
\begin{eqnarray}
&&\left\||D|\langle D\rangle^{s+a-2}(u^{3})\right\|_{L_{xT}^{2}}\leq C\|u\|_{X_{s,b}}^{3}.\label{3.064}
\end{eqnarray}
According to the duality idea, to prove \eqref{3.064}, we only need to prove
\begin{eqnarray}
&&\left|\int_{0}^{T}\int_{\SR^{n}}|D|\langle D\rangle^{s+a-2}\left(\langle D\rangle^{-s} u_{1}\langle D\rangle^{-s} u_{2}\langle D\rangle^{-s} u_{3}\right)(x,t)
\overline{h}(x,t)dxdt\right|\nonumber\\
&&\leq C\|h\|_{L_{T}^{2}L_{x}}\prod_{j=1}^{3}\|u_{j}\|_{X_{0,b}}.\label{3.065}
\end{eqnarray}
We denote
\begin{eqnarray}
&&I_{5}:=
\int_{0}^{T}\int_{\SR^{n}}|D|\langle D\rangle^{s+a-2}\left(\langle D\rangle^{-s} u_{1}\langle D\rangle^{-s} u_{2}\langle D\rangle^{-s} u_{3}\right)(x,t)\overline{h}(x,t)dxdt.\label{3.066}
\end{eqnarray}
Let $P_{N}h$ and $P_{N_{j}}u_{j}(j=1,2,3)$ be the dyadic decompositions of $h$ and $u_{j}$, respectively.
Then, we have that $\supp \mathscr{F}_{x}P_{N}h\subset\{\xi\in\R^{n}:|\xi|\sim N\}$, and $\supp \mathscr{F}_{x}P_{N_{j}}u_{j}\subset \{\xi\in\R^{n}:|\xi|\sim N_{j}\}$, where $N$ and $N_{j}$ are dyadic numbers. We define $\sum=\sum\limits_{N_{1}, N_{2}, N_{3}, N}$. Without loss of generality, we assume $N_{1}\geq N_{2}\geq N_{3}$.

\noindent We consider the following cases: {\bf Case 1}: $N_{1}\leq 10$; {\bf Case
2}: $N_{1}\geq 10$, $N_{1}\gg N_{2}\geq N_{3}$ and {\bf Case 3}: $N_{1}\geq 10$, $N_{1}\sim N_{2}\gg N_{3}$; {\bf Case 4}: $N_{1}\geq 10$, $N_{1}\sim N_{2}\sim N_{3}\gg N$; {\bf Case 5}: $N_{1}\geq 10$, $N_{1}\sim N_{2}\sim N_{3}\sim N$.

\noindent {\bf Case 1}: When $N_{1}\leq 10$, by using H\"{o}lder inequality and Bernstein inequality, we have
\begin{eqnarray}
&&|I_{5}|\leq C\sum N\langle N\rangle^{s+a-2}\langle N_{1}\rangle^{-s}\langle N_{2}\rangle^{-s}\langle N_{3}\rangle^{-s}
\|P_{N_{1}}u_{1}\|_{L_{xT}^{2}}\|P_{N}h\|_{L_{xT}^{2}}\prod_{j=2}^{3}\|P_{N_{j}}u_{j}\|_{L_{xT}^{\infty}}\nonumber\\
&&\leq C\sum NN_{2}^{\frac{n}{2}}N_{3}^{\frac{n}{2}}\langle N_{1}\rangle^{-s}\langle N_{2}\rangle^{-s}
\|P_{N}h\|_{L_{xT}^{2}}\prod_{j=1}^{3}\|P_{N_{j}}u_{j}\|_{X_{0,b}}\nonumber\\
&&\leq C\sum NN_{1}^{\frac{n}{4}}N_{2}^{\frac{n}{4}}N_{3}^{\frac{n}{2}}\|P_{N}h\|_{L_{xT}^{2}}\prod_{j=1}^{3}\|P_{N_{j}}u_{j}\|_{X_{0,b}}\nonumber\\
&&\leq C\|h\|_{L_{xT}^{2}}\prod_{j=1}^{3}\|u_{j}\|_{X_{0,b}}.\label{3.067}
\end{eqnarray}
{\bf Case 2}: When $N_{1}\geq 10$,  $N_{1}\gg N_{2}\geq N_{3}$, we consider the following cases: {\bf Case 2A}: $N_{2}\leq 8$ and {\bf Case
2B}: $N_{2}\geq 8$.

\noindent {\bf Case 2A}: $N_{1}\geq 10$,  $N_{1}\gg N_{2}\geq N_{3}$, $N_{2}\leq 8$, we have $N_{1}\sim N$.
By using H\"{o}lder inequality and Bernstein inequality, $0\leq a\leq 1-2\epsilon$, we have
\begin{eqnarray}
&&|I_{5}|\leq C\sum NN^{s+a-2}N_{1}^{-s}\langle N_{2}\rangle^{-s}\langle N_{3}\rangle^{-s}
\|P_{N_{1}}u_{1}\|_{L_{xT}^{2}}\|P_{N}h\|_{L_{xT}^{2}}\prod_{j=2}^{3}\|P_{N_{j}}u_{j}\|_{L_{xT}^{\infty}}\nonumber\\
&&\leq C\sum N^{a-1}N_{2}^{\frac{n}{2}}N_{3}^{\frac{n}{2}}\langle N_{2}\rangle^{-s}\langle N_{3}\rangle^{-s}
\|P_{N}h\|_{L_{xT}^{2}}\prod_{j=1}^{3}\|P_{N_{j}}u_{j}\|_{X_{0,b}}\nonumber\\
&&\leq C\sum N_{1}^{a-1}N_{2}^{\frac{n}{2}}N_{3}^{\frac{n}{2}}\|P_{N}h\|_{L_{xT}^{2}}\prod_{j=1}^{3}\|P_{N_{j}}u_{j}\|_{X_{0,b}}\nonumber\\
&&\leq C\|h\|_{L_{xT}^{2}}\prod_{j=1}^{3}\|u_{j}\|_{X_{0,b}}.\label{3.068}
\end{eqnarray}
{\bf Case 2B}: $N_{1}\geq 10$,  $N_{1}\gg N_{2}$, $N_{2}\geq 8$,  we consider the following  cases: {\bf Case 2$B_{1}$}: $N_{3}\leq 4$ and {\bf Case
2$B_{2}$}: $N_{3}\geq 4$.

\noindent{\bf Case 2$B_{1}$}: $N_{3}\leq 4$, we have $N\sim N_{1}$.
By using H\"{o}lder inequality and \eqref{2.06}, $2\leq n\leq 4$, $s\geq\frac{2n-3}{6}+2\epsilon$, $0\leq a\leq \min\{1-2\epsilon,s+\frac{5-n}{2}-4\epsilon\}$, we have
\begin{eqnarray}
&&|I_{5}|\leq C\sum NN^{s+a-2}N_{1}^{-s}N_{2}^{-s}\langle N_{3}\rangle^{-s}
\|P_{N_{1}}u_{1}P_{N_{2}}u_{2}P_{N_{3}}u_{3}\|_{L_{xT}^{2}}\|P_{N}h\|_{L_{xT}^{2}}\nonumber\\
&&\leq C\sum N_{1}^{-\epsilon}N_{1}^{a-1+\epsilon}N_{1}^{-\frac{1}{2}+\epsilon}N_{2}^{\frac{n}{2}-1-s+2\epsilon}N_{3}^{\frac{n}{2}}
\|P_{N}h\|_{L_{xT}^{2}}\prod_{j=1}^{3}\|P_{N_{j}}u_{j}\|_{X_{0,b}}\nonumber\\
&&\leq C\sum N_{1}^{-\epsilon} N_{2}^{-s+a-\frac{5-n}{2}+4\epsilon}N_{3}^{\frac{n}{2}}\|P_{N}h\|_{L_{xT}^{2}}\prod_{j=1}^{3}\|P_{N_{j}}u_{j}\|_{X_{0,b}}\nonumber\\
&&\leq C\|h\|_{L_{xT}^{2}}\prod_{j=1}^{3}\|u_{j}\|_{X_{0,b}}.\label{3.069}
\end{eqnarray}
{\bf Case 2$B_{2}$}: $N_{3}\geq 4$, we have $N\sim N_{1}$, $N_{2}\geq N_{3}\geq4$. When $s\geq \frac{n}{2}$,
by using H\"{o}lder inequality and \eqref{2.06}, $0\leq a\leq 1-2\epsilon$, we have
\begin{eqnarray}
&&|I_{5}|\leq C\sum NN^{s+a-2}N_{1}^{-s}N_{2}^{-s}N_{3}^{-s}
\|P_{N_{1}}u_{1}P_{N_{2}}u_{2}P_{N_{3}}u_{3}\|_{L_{xT}^{2}}\|P_{N}h\|_{L_{xT}^{2}}\nonumber\\
&&\leq C\sum N_{1}^{-\epsilon}N_{1}^{a-1+\epsilon}N_{1}^{-\frac{1}{2}+\epsilon}N_{2}^{\frac{n}{2}-1+2\epsilon}N_{2}^{-s}N_{3}^{-s+\frac{n}{2}}
\|P_{N}h\|_{L_{xT}^{2}}\prod_{j=1}^{3}\|P_{N_{j}}u_{j}\|_{X_{0,b}}\nonumber\\
&&\leq C\sum N_{1}^{-\epsilon} \|P_{N}h\|_{L_{xT}^{2}}\prod_{j=1}^{3}\|P_{N_{j}}u_{j}\|_{X_{0,b}}\nonumber\\
&&\leq C\|h\|_{L_{xT}^{2}}\prod_{j=1}^{3}\|u_{j}\|_{X_{0,b}}.\label{3.070}
\end{eqnarray}
When $s\leq \frac{n}{2}$,
by using H\"{o}lder inequality and \eqref{2.06}, $2\leq n\leq 4$, $s\geq\frac{2n-3}{6}+2\epsilon$, $0\leq a\leq \min\{1-2\epsilon,2s+\frac{5-2n}{2}-4\epsilon\}$, we have
\begin{eqnarray}
&&|I_{5}|\leq C\sum NN^{s+a-2}N_{1}^{-s}N_{2}^{-s}N_{3}^{-s}
\|P_{N_{1}}u_{1}P_{N_{2}}u_{2}P_{N_{3}}u_{3}\|_{L_{xT}^{2}}\|P_{N}h\|_{L_{xT}^{2}}\nonumber\\
&&\leq C\sum N_{1}^{-\epsilon}N_{1}^{a-1+\epsilon}N_{1}^{-\frac{1}{2}+\epsilon}N_{2}^{\frac{n}{2}-1+2\epsilon}N_{2}^{-s}N_{3}^{-s+\frac{n}{2}}
\|P_{N}h\|_{L_{xT}^{2}}\prod_{j=1}^{3}\|P_{N_{j}}u_{j}\|_{X_{0,b}}\nonumber\\
&&\leq C\sum N_{1}^{-\epsilon} N_{3}^{a-2s-\frac{5-2n}{2}+4\epsilon}\|P_{N}h\|_{L_{xT}^{2}}\prod_{j=1}^{3}\|P_{N_{j}}u_{j}\|_{X_{0,b}}\nonumber\\
&&\leq C\|h\|_{L_{xT}^{2}}\prod_{j=1}^{3}\|u_{j}\|_{X_{0,b}}.\label{3.071}
\end{eqnarray}
{\bf Case 3}: $N_{1}\geq 10$, $N_{1}\sim N_{2}\gg N_{3}$, we consider the following cases: {\bf Case 3A}: $N_{1}\gg N$ and {\bf Case
3B}: $N_{1}\sim N$.

\noindent {\bf Case 3A}: $N_{1}\geq 10$, $N_{1}\sim N_{2}\gg N_{3}$, $N_{1}\gg N$, we consider the following cases: {\bf Case 3$A_{1}$}: $N_{3}\leq 4$ and {\bf Case
3$A_{2}$}: $N_{3}\geq 4$.

\noindent{\bf Case 3$A_{1}$}: $N_{3}\leq 4$, when $N\leq 4$, by using H\"{o}lder inequality and \eqref{2.06}, $s\geq \frac{n-3}{4}+2\epsilon$, we have
\begin{eqnarray}
&&|I_{5}|\leq C\sum N\langle N\rangle^{s+a-2}N_{1}^{-s}N_{2}^{-s}\langle N_{3}\rangle^{-s}
\|P_{N_{1}}u_{1}P_{N_{2}}u_{2}P_{N_{3}}u_{3}\|_{L_{xT}^{2}}\|P_{N}h\|_{L_{xT}^{2}}\nonumber\\
&&\leq C\sum NN_{1}^{-\epsilon}N_{1}^{\frac{n-3}{2}-2s+4\epsilon}N_{3}^{\frac{n}{2}}
\|P_{N}h\|_{L_{xT}^{2}}\prod_{j=1}^{3}\|P_{N_{j}}u_{j}\|_{X_{0,b}}\nonumber\\
&&\leq C\sum NN_{1}^{-\epsilon}N_{3}^{\frac{n}{2}}\|P_{N}h\|_{L_{xT}^{2}}\prod_{j=1}^{3}\|P_{N_{j}}u_{j}\|_{X_{0,b}}\nonumber\\
&&\leq C\|h\|_{L_{xT}^{2}}\prod_{j=1}^{3}\|u_{j}\|_{X_{0,b}}.\label{3.072}
\end{eqnarray}
When $N\geq 4$, $n=2$, $\frac{n-3}{4}+2\epsilon\leq s\leq 0$, by using H\"{o}lder inequality and \eqref{2.06}, $0\leq a\leq 1-2\epsilon$,  we have
\begin{eqnarray}
&&|I_{5}|\leq C\sum NN^{s+a-2}N_{1}^{-s}N_{2}^{-s}\langle N_{3}\rangle^{-s}
\|P_{N_{1}}u_{1}P_{N_{2}}u_{2}P_{N_{3}}u_{3}\|_{L_{xT}^{2}}\|P_{N}h\|_{L_{xT}^{2}}\nonumber\\
&&\leq C\sum N_{1}^{-\epsilon}N^{s+a-1}N_{1}^{\frac{n-3}{2}-2s+4\epsilon}N_{3}^{\frac{n}{2}}\|P_{N}h\|_{L_{xT}^{2}}
\prod_{j=1}^{3}\|P_{N_{j}}u_{j}\|_{X_{0,b}}\nonumber\\
&&\leq C\sum N_{1}^{-\epsilon}N_{1}^{\frac{n-3}{2}-2s+4\epsilon}N_{3}^{\frac{n}{2}}\|P_{N}h\|_{L_{xT}^{2}}
\prod_{j=1}^{3}\|P_{N_{j}}u_{j}\|_{X_{0,b}}\nonumber\\
&&\leq C\sum N_{1}^{-\epsilon}N_{3}^{\frac{n}{2}}\|P_{N}h\|_{L_{xT}^{2}}\prod_{j=1}^{3}\|P_{N_{j}}u_{j}\|_{X_{0,b}}\nonumber\\
&&\leq C\|h\|_{L_{xT}^{2}}\prod_{j=1}^{3}\|u_{j}\|_{X_{0,b}}.\label{3.073}
\end{eqnarray}
When $N\geq 4$, $n=2$, $s\geq 0$, by using H\"{o}lder inequality and \eqref{2.06}, $0\leq a\leq 1-2\epsilon$,  we have
\begin{eqnarray}
&&|I_{5}|\leq C\sum NN^{s+a-2}N_{1}^{-s}N_{2}^{-s}\langle N_{3}\rangle^{-s}
\|P_{N_{1}}u_{1}P_{N_{2}}u_{2}P_{N_{3}}u_{3}\|_{L_{xT}^{2}}\|P_{N}h\|_{L_{xT}^{2}}\nonumber\\
&&\leq C\sum N_{1}^{-\epsilon}N^{s+a-1}N_{1}^{\frac{n-3}{2}-2s+4\epsilon}N_{3}^{\frac{n}{2}}\|P_{N}h\|_{L_{xT}^{2}}
\prod_{j=1}^{3}\|P_{N_{j}}u_{j}\|_{X_{0,b}}\nonumber\\
&&\leq C\sum N_{1}^{-\epsilon}N_{1}^{\frac{n-3}{2}-s+2\epsilon}N_{3}^{\frac{n}{2}}\|P_{N}h\|_{L_{xT}^{2}}
\prod_{j=1}^{3}\|P_{N_{j}}u_{j}\|_{X_{0,b}}\nonumber\\
&&\leq C\sum N_{1}^{-\epsilon}N_{3}^{\frac{n}{2}}\|P_{N}h\|_{L_{xT}^{2}}\prod_{j=1}^{3}\|P_{N_{j}}u_{j}\|_{X_{0,b}}\nonumber\\
&&\leq C\|h\|_{L_{xT}^{2}}\prod_{j=1}^{3}\|u_{j}\|_{X_{0,b}}.\label{3.074}
\end{eqnarray}
When $N\geq 4$, $n=3,4$, $s\geq \frac{n-3}{2}+4\epsilon$, by using H\"{o}lder inequality and \eqref{2.06}, $0\leq a\leq 1-2\epsilon$,  we have
\begin{eqnarray}
&&|I_{5}|\leq C\sum NN^{s+a-2}N_{1}^{-s}N_{2}^{-s}\langle N_{3}\rangle^{-s}
\|P_{N_{1}}u_{1}P_{N_{2}}u_{2}P_{N_{3}}u_{3}\|_{L_{xT}^{2}}\|P_{N}h\|_{L_{xT}^{2}}\nonumber\\
&&\leq C\sum N_{1}^{-\epsilon}N^{s+a-1}N_{1}^{\frac{n-3}{2}-2s+4\epsilon}N_{3}^{\frac{n}{2}}\|P_{N}h\|_{L_{xT}^{2}}
\prod_{j=1}^{3}\|P_{N_{j}}u_{j}\|_{X_{0,b}}\nonumber\\
&&\leq C\sum N_{1}^{-\epsilon}N_{1}^{\frac{n-3}{2}-s+4\epsilon}N_{3}^{\frac{n}{2}}\|P_{N}h\|_{L_{xT}^{2}}
\prod_{j=1}^{3}\|P_{N_{j}}u_{j}\|_{X_{0,b}}\nonumber\\
&&\leq C\sum N_{1}^{-\epsilon}N_{3}^{\frac{n}{2}}\|P_{N}h\|_{L_{xT}^{2}}\prod_{j=1}^{3}\|P_{N_{j}}u_{j}\|_{X_{0,b}}\nonumber\\
&&\leq C\|h\|_{L_{xT}^{2}}\prod_{j=1}^{3}\|u_{j}\|_{X_{0,b}}.\label{3.075}
\end{eqnarray}
\noindent{\bf Case 3$A_{2}$}: $N_{3}\geq 4$, when $N\leq 4$, by using H\"{o}lder inequality and \eqref{2.06}, $s\geq \frac{2n-3}{6}+2\epsilon$, we have
\begin{eqnarray}
&&|I_{5}|\leq C\sum N\langle N\rangle^{s+a-2}N_{1}^{-s}N_{2}^{-s} N_{3}^{-s}
\|P_{N_{1}}u_{1}P_{N_{2}}u_{2}P_{N_{3}}u_{3}\|_{L_{xT}^{2}}\|P_{N}h\|_{L_{xT}^{2}}\nonumber\\
&&\leq C\sum NN_{1}^{-\epsilon}N_{1}^{\frac{n-3}{2}-2s+4\epsilon}N_{3}^{-s+\frac{n}{2}}\|P_{N}h\|_{L_{xT}^{2}}
\prod_{j=1}^{3}\|P_{N_{j}}u_{j}\|_{X_{0,b}}\nonumber\\
&&\leq C\sum NN_{1}^{-\epsilon}\|P_{N}h\|_{L_{xT}^{2}}\prod_{j=1}^{3}\|P_{N_{j}}u_{j}\|_{X_{0,b}}
\leq C\|h\|_{L_{xT}^{2}}\prod_{j=1}^{3}\|u_{j}\|_{X_{0,b}}.\label{3.076}
\end{eqnarray}
When $N\geq 4$, $s\geq\frac{n}{2}$, by using H\"{o}lder inequality and \eqref{2.06}, $0\leq a\leq 1-2\epsilon$, we have
\begin{eqnarray}
&&|I_{5}|\leq C\sum NN^{s+a-2}N_{1}^{-s}N_{2}^{-s}N_{3}^{-s}
\|P_{N_{1}}u_{1}P_{N_{2}}u_{2}P_{N_{3}}\|_{L_{xT}^{2}}\|P_{N}h\|_{L_{xT}^{2}}\nonumber\\
&&\leq C\sum N_{1}^{-\epsilon}N^{s+a-1}N_{1}^{\frac{n-3}{2}-2s+4\epsilon}N_{3}^{-s+\frac{n}{2}}\|P_{N}h\|_{L_{xT}^{2}}
\prod_{j=1}^{3}\|P_{N_{j}}u_{j}\|_{X_{0,b}}\nonumber\\
&&\leq C\sum N_{1}^{-\epsilon}\|P_{N}h\|_{L_{xT}^{2}}\prod_{j=1}^{3}\|P_{N_{j}}u_{j}\|_{X_{0,b}}\nonumber\\
&&\leq C\|h\|_{L_{xT}^{2}}\prod_{j=1}^{3}\|u_{j}\|_{X_{0,b}}.\label{3.077}
\end{eqnarray}
When $N\geq 4$, $2\leq n\leq 4$, $\frac{2n-3}{6}+2\epsilon\leq s\leq \frac{n}{2}$, $0\leq a\leq 1-s$, by using H\"{o}lder inequality and \eqref{2.06}, we have
\begin{eqnarray}
&&|I_{5}|\leq C\sum NN^{s+a-2}N_{1}^{-s}N_{2}^{-s} N_{3}^{-s}
\|P_{N_{1}}u_{1}P_{N_{2}}u_{2}P_{N_{3}}\|_{L_{xT}^{2}}\|P_{N}h\|_{L_{xT}^{2}}\nonumber\\
&&\leq C\sum N_{1}^{-\epsilon}N^{s+a-1}N_{1}^{\frac{n-3}{2}-2s+4\epsilon}N_{3}^{-s+\frac{n}{2}}\|P_{N}h\|_{L_{xT}^{2}}
\prod_{j=1}^{3}\|P_{N_{j}}u_{j}\|_{X_{0,b}}\nonumber\\
&&\leq C\sum N_{1}^{-\epsilon}N_{1}^{\frac{2n-3}{2}-3s+4\epsilon}\|P_{N}h\|_{L_{xT}^{2}}\prod_{j=1}^{3}\|P_{N_{j}}u_{j}\|_{X_{0,b}}\nonumber\\
&&\leq C\sum N_{1}^{-\epsilon}\|P_{N}h\|_{L_{xT}^{2}}\prod_{j=1}^{3}\|P_{N_{j}}u_{j}\|_{X_{0,b}}\nonumber\\
&&\leq C\|h\|_{L_{xT}^{2}}\prod_{j=1}^{3}\|u_{j}\|_{X_{0,b}}.\label{3.078}
\end{eqnarray}
When $N\geq 4$, $2\leq n\leq 4$, $\frac{2n-3}{6}+2\epsilon\leq s\leq \frac{n}{2}$, $1-s\leq a\leq 2s+\frac{5-2n}{2}-4\epsilon$, by using H\"{o}lder inequality and \eqref{2.06}, we have
\begin{eqnarray}
&&|I_{5}|\leq C\sum NN^{s+a-2}N_{1}^{-s}N_{2}^{-s}N_{3}^{-s}
\|P_{N_{1}}u_{1}P_{N_{2}}u_{2}P_{N_{3}}\|_{L_{xT}^{2}}\|P_{N}h\|_{L_{xT}^{2}}\nonumber\\
&&\leq C\sum N_{1}^{-\epsilon}N^{s+a-1}N_{1}^{\frac{n-3}{2}-2s+4\epsilon}N_{3}^{-s+\frac{n}{2}}\|P_{N}h\|_{L_{xT}^{2}}
\prod_{j=1}^{3}\|P_{N_{j}}u_{j}\|_{X_{0,b}}\nonumber\\
&&\leq C\sum N_{1}^{-\epsilon}N_{1}^{a-\frac{5-2n}{2}-2s+4\epsilon}\|P_{N}h\|_{L_{xT}^{2}}\prod_{j=1}^{3}\|P_{N_{j}}u_{j}\|_{X_{0,b}}\nonumber\\
&&\leq C\sum N_{1}^{-\epsilon}\|P_{N}h\|_{L_{xT}^{2}}\prod_{j=1}^{3}\|P_{N_{j}}u_{j}\|_{X_{0,b}}\nonumber\\
&&\leq C\|h\|_{L_{xT}^{2}}\prod_{j=1}^{3}\|u_{j}\|_{X_{0,b}}.\label{3.079}
\end{eqnarray}
{\bf Case 3B}: $N_{1}\sim N_{2}\sim N$, when $N_{3}\leq 4$,
by using H\"{o}lder inequality and \eqref{2.06}, $2\leq n\leq 4$, $s\geq\frac{2n-3}{6}+2\epsilon$, $0\leq a\leq s+\frac{5-n}{2}-4\epsilon$, we have
\begin{eqnarray}
&&|I_{5}|\leq C\sum NN^{s+a-2}N_{1}^{-s}N_{2}^{-s}\langle N_{3}\rangle^{-s}
\|P_{N_{1}}u_{1}P_{N_{2}}u_{2}P_{N_{3}}\|_{L_{xT}^{2}}\|P_{N}h\|_{L_{xT}^{2}}\nonumber\\
&&\leq C\sum N_{1}^{-\epsilon}N^{s+a-1}N_{1}^{\frac{n-3}{2}-2s+4\epsilon}N_{3}^{\frac{n}{2}}\|P_{N}h\|_{L_{xT}^{2}}
\prod_{j=1}^{3}\|P_{N_{j}}u_{j}\|_{X_{0,b}}\nonumber\\
&&\leq C\sum N_{1}^{-\epsilon}N_{1}^{a-s-\frac{5-n}{2}+4\epsilon}N_{3}^{\frac{n}{2}}\|P_{N}h\|_{L_{xT}^{2}}\prod_{j=1}^{3}\|P_{N_{j}}u_{j}\|_{X_{0,b}}\nonumber\\
&&\leq C\sum N_{1}^{-\epsilon}N_{3}^{\frac{n}{2}}\|P_{N}h\|_{L_{xT}^{2}}\prod_{j=1}^{3}\|P_{N_{j}}u_{j}\|_{X_{0,b}}\nonumber\\
&&\leq C\|h\|_{L_{xT}^{2}}\prod_{j=1}^{3}\|u_{j}\|_{X_{0,b}}.\label{3.080}
\end{eqnarray}
When $N_{3}\geq 4$, $s\leq \frac{n}{2}$,
by using H\"{o}lder inequality and \eqref{2.06}, $2\leq n\leq 4$, $s\geq\frac{2n-3}{6}+2\epsilon$, $0\leq a\leq  2s+\frac{5-2n}{2}-4\epsilon$, we have
\begin{eqnarray}
&&|I_{5}|\leq C\sum NN^{s+a-2}N_{1}^{-s}N_{2}^{-s}N_{3}^{-s}
\|P_{N_{1}}u_{1}P_{N_{2}}u_{2}P_{N_{3}}\|_{L_{xT}^{2}}\|P_{N}h\|_{L_{xT}^{2}}\nonumber\\
&&\leq C\sum N_{1}^{-\epsilon}N^{s+a-1}N_{1}^{\frac{n-3}{2}-2s+4\epsilon}N_{3}^{-s+\frac{n}{2}}\|P_{N}h\|_{L_{xT}^{2}}
\prod_{j=1}^{3}\|P_{N_{j}}u_{j}\|_{X_{0,b}}\nonumber\\
&&\leq C\sum N_{1}^{-\epsilon}N_{1}^{a-2s-\frac{5-2n}{2}+4\epsilon}\|P_{N}h\|_{L_{xT}^{2}}\prod_{j=1}^{3}\|P_{N_{j}}u_{j}\|_{X_{0,b}}\nonumber\\
&&\leq C\sum N_{1}^{-\epsilon}\|P_{N}h\|_{L_{xT}^{2}}\prod_{j=1}^{3}\|P_{N_{j}}u_{j}\|_{X_{0,b}}\nonumber\\
&&\leq C\|h\|_{L_{xT}^{2}}\prod_{j=1}^{3}\|u_{j}\|_{X_{0,b}}.\label{3.081}
\end{eqnarray}
When $N_{3}\geq 4$, $s\geq \frac{n}{2}$,
by using H\"{o}lder inequality and \eqref{2.06}, $2\leq n\leq 4$, $s\geq\frac{2n-3}{6}+2\epsilon$, $0\leq a\leq  s+\frac{5-n}{2}-4\epsilon$, we have
\begin{eqnarray}
&&|I_{5}|\leq C\sum NN^{s+a-2}N_{1}^{-s}N_{2}^{-s}N_{3}^{-s}
\|P_{N_{1}}u_{1}P_{N_{2}}u_{2}P_{N_{3}}\|_{L_{xT}^{2}}\|P_{N}h\|_{L_{xT}^{2}}\nonumber\\
&&\leq C\sum N_{1}^{-\epsilon}N^{s+a-1}N_{1}^{\frac{n-3}{2}-2s+4\epsilon}N_{3}^{-s+\frac{n}{2}}\|P_{N}h\|_{L_{xT}^{2}}
\prod_{j=1}^{3}\|P_{N_{j}}u_{j}\|_{X_{0,b}}\nonumber\\
&&\leq C\sum N_{1}^{-\epsilon}N_{1}^{a-s-\frac{5-n}{2}+4\epsilon}\|P_{N}h\|_{L_{xT}^{2}}\prod_{j=1}^{3}\|P_{N_{j}}u_{j}\|_{X_{0,b}}\nonumber\\
&&\leq C\sum N_{1}^{-\epsilon}\|P_{N}h\|_{L_{xT}^{2}}\prod_{j=1}^{3}\|P_{N_{j}}u_{j}\|_{X_{0,b}}\nonumber\\
&&\leq C\|h\|_{L_{xT}^{2}}\prod_{j=1}^{3}\|u_{j}\|_{X_{0,b}}.\label{3.082}
\end{eqnarray}
{\bf Case 4}: $N_{1}\geq 10$, $N_{1}\sim N_{2}\sim N_{3}\gg N$, when $N\leq 4$, by using H\"{o}lder inequality and \eqref{2.06}, $s\geq\frac{2n-3}{6}+2\epsilon$, we have
\begin{eqnarray}
&&|I_{5}|\leq C\sum N\langle N\rangle^{s+a-2}N_{1}^{-s}N_{2}^{-s}N_{3}^{-s}
\|P_{N_{1}}u_{1}P_{N_{2}}u_{2}P_{N_{3}}\|_{L_{xT}^{2}}\|P_{N}h\|_{L_{xT}^{2}}\nonumber\\
&&\leq C\sum NN_{1}^{-\epsilon}N_{1}^{\frac{2n-3}{2}-3s+4\epsilon}\|P_{N}h\|_{L_{xT}^{2}}
\prod_{j=1}^{3}\|P_{N_{j}}u_{j}\|_{X_{0,b}}\nonumber\\
&&\leq C\sum NN_{1}^{-\epsilon}\|P_{N}h\|_{L_{xT}^{2}}\prod_{j=1}^{3}\|P_{N_{j}}u_{j}\|_{X_{0,b}}\nonumber\\
&&\leq C\|h\|_{L_{xT}^{2}}\prod_{j=1}^{3}\|u_{j}\|_{X_{0,b}}.\label{3.083}
\end{eqnarray}
When $N\geq 4$, $0\leq a\leq 1-s$, by using H\"{o}lder inequality and \eqref{2.06}, $s\geq \frac{2n-3}{6}+2\epsilon$, we have
\begin{eqnarray}
&&|I_{5}|\leq C\sum NN^{s+a-2}N_{1}^{-s}N_{2}^{-s}N_{3}^{-s}
\|P_{N_{1}}u_{1}P_{N_{2}}u_{2}P_{N_{3}}\|_{L_{xT}^{2}}\|P_{N}h\|_{L_{xT}^{2}}\nonumber\\
&&\leq C\sum N_{1}^{-\epsilon}N^{s+a-1}N_{1}^{\frac{2n-3}{2}-3s+4\epsilon}\|P_{N}h\|_{L_{xT}^{2}}
\prod_{j=1}^{3}\|P_{N_{j}}u_{j}\|_{X_{0,b}}\nonumber\\
&&\leq C\sum N_{1}^{-\epsilon}N_{1}^{\frac{2n-3}{2}-3s+4\epsilon}\|P_{N}h\|_{L_{xT}^{2}}
\prod_{j=1}^{3}\|P_{N_{j}}u_{j}\|_{X_{0,b}}\nonumber\\
&&\leq C\sum N_{1}^{-\epsilon}\|P_{N}h\|_{L_{xT}^{2}}\prod_{j=1}^{3}\|P_{N_{j}}u_{j}\|_{X_{0,b}}\nonumber\\
&&\leq C\|h\|_{L_{xT}^{2}}\prod_{j=1}^{3}\|u_{j}\|_{X_{0,b}}.\label{3.084}
\end{eqnarray}
When $N\geq 4$, $s\geq\frac{2n-3}{6}+2\epsilon$, $1-s\leq a\leq 2s+\frac{5-2n}{2}-4\epsilon$, by using H\"{o}lder inequality and \eqref{2.06}, we have
\begin{eqnarray}
&&|I_{5}|\leq C\sum NN^{s+a-2}N_{1}^{-s}N_{2}^{-s}N_{3}^{-s}
\|P_{N_{1}}u_{1}P_{N_{2}}u_{2}P_{N_{3}}\|_{L_{xT}^{2}}\|P_{N}h\|_{L_{xT}^{2}}\nonumber\\
&&\leq C\sum N_{1}^{-\epsilon}N^{s+a-1}N_{1}^{\frac{2n-3}{2}-3s+4\epsilon}\|P_{N}h\|_{L_{xT}^{2}}
\prod_{j=1}^{3}\|P_{N_{j}}u_{j}\|_{X_{0,b}}\nonumber\\
&&\leq C\sum N_{1}^{-\epsilon}N_{1}^{a-\frac{5-2n}{2}-2s+4\epsilon}\|P_{N}h\|_{L_{xT}^{2}}
\prod_{j=1}^{3}\|P_{N_{j}}u_{j}\|_{X_{0,b}}\nonumber\\
&&\leq C\sum N_{1}^{-\epsilon}\|P_{N}h\|_{L_{xT}^{2}}\prod_{j=1}^{3}\|P_{N_{j}}u_{j}\|_{X_{0,b}}\nonumber\\
&&\leq C\|h\|_{L_{xT}^{2}}\prod_{j=1}^{3}\|u_{j}\|_{X_{0,b}}.\label{3.085}
\end{eqnarray}
{\bf Case 5}: $N_{1}\geq 10$, $N_{1}\sim N_{2}\sim N_{3}\sim N$, by using H\"{o}lder inequality and \eqref{2.06}, $2\leq n\leq 4$, $s\geq\frac{2n-3}{6}+2\epsilon$, $0\leq a\leq  2s+\frac{5-2n}{2}-4\epsilon$,  we have
\begin{eqnarray}
&&|I_{5}|\leq C\sum NN^{s+a-2}N_{1}^{-s}N_{2}^{-s}N_{3}^{-s}
\|P_{N_{1}}u_{1}P_{N_{2}}u_{2}P_{N_{3}}\|_{L_{xT}^{2}}\|P_{N}h\|_{L_{xT}^{2}}\nonumber\\
&&\leq C\sum N_{1}^{-\epsilon}N_{1}^{a-\frac{5-2n}{2}-2s+4\epsilon}\|P_{N}h\|_{L_{xT}^{2}}
\prod_{j=1}^{3}\|P_{N_{j}}u_{j}\|_{X_{0,b}}\nonumber\\
&&\leq C\sum N_{1}^{-\epsilon}\|P_{N}h\|_{L_{xT}^{2}}\prod_{j=1}^{3}\|P_{N_{j}}u_{j}\|_{X_{0,b}}\nonumber\\
&&\leq C\|h\|_{L_{xT}^{2}}\prod_{j=1}^{3}\|u_{j}\|_{X_{0,b}}.\label{3.086}
\end{eqnarray}

This completes the proof of Lemma 3.5.

\begin{Remark}
The restriction on $n$ arises from the fact that in \eqref{3.078}, we require $\frac{2n-3}{6}+2\epsilon\leq s\leq 1$, which forces the restriction $n\leq 4$.
\end{Remark}

\begin{Lemma}\label{Lemma3.6}
Let $ n\geq 5$, $0<T<1$, $0<\epsilon\ll1$, $s\geq\frac{2n-5}{4}+2\epsilon$, $b_{0}=-\frac{1}{2}+\frac{\epsilon}{12}$, $b=\frac{1}{2}+\frac{\epsilon}{24}$ and
\begin{align*}
0\leq a&\leq \min\{1-2\epsilon,s+\frac{5-n}{2}-4\epsilon,2s+\frac{5-2n}{2}-4\epsilon\}\\
&=\min\{1-2\epsilon,2s+\frac{5-2n}{2}-4\epsilon\}.
\end{align*}
Then, we have
\begin{eqnarray}
&&\left\|\eta\left(\frac{t}{T}\right)\mathscr{F}_{xt}^{-1}\left(\frac{|\xi|^{2}\mathscr{F}_{xt}(u^{3})}{2i\phi_{\pm}(\xi)}\right)\right\|_{X_{s+a,b_{0}}}\leq CT^{-b_{0}}\|u\|_{X_{s,b}}^{3}.\label{3.087}
\end{eqnarray}
\end{Lemma}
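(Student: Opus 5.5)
The plan is to repeat the scheme of Lemma 3.5. By \eqref{3.02} and \cite[Lemma 2.11]{T2006}, estimate \eqref{3.087} reduces to
\begin{eqnarray*}
\left\||D|\langle D\rangle^{s+a-2}(u^{3})\right\|_{L_{xT}^{2}}\leq C\|u\|_{X_{s,b}}^{3}.
\end{eqnarray*}
By duality this becomes a bound for $I_{6}$, the analogue of $I_{5}$ in $\R^{n}$, by $C\|h\|_{L_{xT}^{2}}\prod_{j=1}^{3}\|u_{j}\|_{X_{0,b}}$. I will decompose dyadically with $N_{1}\geq N_{2}\geq N_{3}$ and use the same five cases: $N_{1}\leq 10$; $N_{1}\gg N_{2}$; $N_{1}\sim N_{2}\gg N_{3}$; $N_{1}\sim N_{2}\sim N_{3}\gg N$; and all four comparable. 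Each case is split further according to whether the low frequencies are $\leq 8$ or $\leq 4$.

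The low-frequency cases, Case 1 and Case 2A, are handled exactly as before. One puts the two lower-frequency factors in $L_{xT}^{\infty}$ via Bernstein, which costs only bounded powers, and uses $a\leq 1-2\epsilon$ to get the decay $N_{1}^{a-1}$. In every remaining case I apply \eqref{2.06}, $\|u_{1}u_{2}u_{3}\|_{L^{2}_{xT}}\lesssim N_{3}^{n/2}N_{2}^{n/2-1+2\epsilon}N_{1}^{-1/2+\epsilon}\prod\|u_{i}\|_{X_{0,b}}$, and then count exponents.

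The key simplification compared with Lemma 3.5 is that $n\geq5$ forces $s\geq\frac{2n-5}{4}+2\epsilon\geq \frac54>1$, so $s>1-a$ always. Thus, as in Case 3B of Lemma 3.3, the splitting $a\lessgtr 1-s$ is unnecessary. Whenever $N\ll N_{1}$ and $N\geq4$, I bound $N^{s+a-1}\leq N_{1}^{s+a-1}$, which is legitimate since $s+a-1>0$. The constraint $a\leq s+\frac{5-n}{2}-4\epsilon$ is implied by $a\leq 2s+\frac{5-2n}{2}-4\epsilon$, because $s\geq\frac{n}{2}$ is not needed: indeed $s\geq \frac{2n-5}{4}$ already gives $s+\frac{5-n}{2}\geq 2s+\frac{5-2n}{2}$ exactly when $s\leq \frac n2$. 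When $s>\frac n2$, the factor $N_{3}^{n/2-s}$ is bounded and the first constraint governs. So I split each high case into $s\leq\frac n2$ and $s\geq\frac n2$, as in \eqref{3.070}--\eqref{3.071} and \eqref{3.081}--\eqref{3.082}.

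In the worst configuration, all $N_{j}\sim N_{1}$, the total power of $N_{1}$ is
\begin{eqnarray*}
1+(s+a-2)-3s+\frac n2+\frac n2-1-\frac12+3\epsilon=a-2s+\frac{2n-5}{2}+3\epsilon,
\end{eqnarray*}
which is at most $-\epsilon$ by the hypothesis on $a$. The cases with $N\leq 4$ need $\frac{2n-3}{2}-3s+4\epsilon<0$, and this follows from $s\geq\frac{2n-5}{4}$ once $n\geq5$, since then $\frac{2n-5}{4}\geq\frac{2n-3}{6}$. Case 3 with $N_{3}\leq 4$ needs $\frac{n-3}{2}-2s<0$, which is clear.

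The main obstacle is bookkeeping Case 3A and Case 4 with $N\geq4$ for general $n$. There one must check that $N_{1}^{s+a-1}N_{1}^{\frac{n-3}{2}-2s}N_{3}^{\frac n2-s}$, or its analogue, yields a negative power of $N_{1}$ while staying summable in $N_{3}$. I expect this to reduce again to $a\leq 2s+\frac{5-2n}{2}-4\epsilon$ when $s\leq \frac n2$, and to $a\leq s+\frac{5-n}{2}-4\epsilon$ otherwise. Summation over the dyadic blocks is then closed by the $N_{1}^{-\epsilon}$ gain together with Cauchy--Schwarz in the comparable indices.
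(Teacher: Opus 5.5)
Your proposal is correct and follows essentially the same route as the paper's proof of Lemma 3.6. Both use the same reduction via \eqref{3.02} and the time-localization lemma, duality, and the same five-case dyadic decomposition with \eqref{2.06} in every high-frequency case. Both also split at $s=\frac{n}{2}$ (Cases 2$B_{2}$ and 3B) and use $s\geq\frac{2n-5}{4}+2\epsilon>1$ to bound $N^{s+a-1}\leq N_{1}^{s+a-1}$ in Cases 3$A_{2}$ and 4, with no $a\lessgtr 1-s$ split. One cosmetic point: the equivalence $s+\frac{5-n}{2}\geq 2s+\frac{5-2n}{2}\iff s\leq\frac{n}{2}$ does not use $s\geq\frac{2n-5}{4}$, and for $s>\frac{n}{2}$ both quantities exceed $1-2\epsilon$, which is why the stated minimum reduces to $\min\{1-2\epsilon,2s+\frac{5-2n}{2}-4\epsilon\}$.
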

\noindent {\bf Proof.}
Inspired by \cite{ET2025}, we prove Lemma 3.6.
From \eqref{3.02}, to prove \eqref{3.087}, it suffices to show that
\begin{eqnarray}
&&\left\|\eta\left(\frac{t}{T}\right)|D|\langle D\rangle^{-2}(u^{3})\right\|_{X_{s+a,b_{0}}}\leq CT^{-b_{0}}\|u\|_{X_{s,b}}^{3}.\label{3.088}
\end{eqnarray}
By using \cite[Lemma 2.11]{T2006}, we have
\begin{eqnarray}
&&\left\|\eta\left(\frac{t}{T}\right)|D|\langle D\rangle^{-2}(u^{3})\right\|_{X_{s+a,b_{0}}}\leq CT^{-b_{0}}
\left\|\eta\left(\frac{t}{T}\right)|D|\langle D\rangle^{-2}(u^{3})\right\|_{X_{s+a,0}}\nonumber\\
&&=CT^{-b_{0}}\left\||D|\langle D\rangle^{-2}(u^{3})\right\|_{L_{T}^{2}H_{x}^{s+a}}.\label{3.089}
\end{eqnarray}
By using \eqref{3.089}, to prove \eqref{3.087}, it suffices to show that
\begin{eqnarray}
&&\left\||D|\langle D\rangle^{s+a-2}(u^{3})\right\|_{L_{xT}^{2}}\leq C\|u\|_{X_{s,b}}^{3}.\label{3.090}
\end{eqnarray}
According to the duality idea, to prove \eqref{3.090}, we only need to prove
\begin{eqnarray}
&&\left|\int_{0}^{T}\int_{\SR^{n}}|D|\langle D\rangle^{s+a-2}\left(\langle D\rangle^{-s} u_{1}\langle D\rangle^{-s} u_{2}\langle D\rangle^{-s} u_{3}\right)(x,t)
\overline{h}(x,t)dxdt\right|\nonumber\\
&&\leq C\|h\|_{L_{T}^{2}L_{x}}\prod_{j=1}^{3}\|u_{j}\|_{X_{0,b}}.\label{3.091}
\end{eqnarray}
We denote
\begin{eqnarray}
&&I_{6}:=
\int_{0}^{T}\int_{\SR^{n}}|D|\langle D\rangle^{s+a-2}\left(\langle D\rangle^{-s} u_{1}\langle D\rangle^{-s} u_{2}\langle D\rangle^{-s} u_{3}\right)(x,t)\overline{h}(x,t)dxdt.\label{3.092}
\end{eqnarray}
Let $P_{N}h$ and $P_{N_{j}}u_{j}(j=1,2,3)$ be the dyadic decompositions of $h$ and $u_{j}$, respectively.
Then, we have that $\supp \mathscr{F}_{x}P_{N}h\subset\{\xi\in\R^{n}:|\xi|\sim N\}$, and $\supp \mathscr{F}_{x}P_{N_{j}}u_{j}\subset \{\xi\in\R^{n}:|\xi|\sim N_{j}\}$, where $N$ and $N_{j}$ are dyadic numbers. We define $\sum=\sum\limits_{N_{1}, N_{2}, N_{3}, N}$. Without loss of generality, we assume $N_{1}\geq N_{2}\geq N_{3}$.

\noindent We consider the following cases: {\bf Case 1}: $N_{1}\leq 10$; {\bf Case
2}: $N_{1}\geq 10$, $N_{1}\gg N_{2}\geq N_{3}$ and {\bf Case 3}: $N_{1}\geq 10$, $N_{1}\sim N_{2}\gg N_{3}$; {\bf Case 4}: $N_{1}\geq 10$, $N_{1}\sim N_{2}\sim N_{3}\gg N$; {\bf Case 5}: $N_{1}\geq 10$, $N_{1}\sim N_{2}\sim N_{3}\sim N$, respectively.

\noindent {\bf Case 1}: When $N_{1}\leq 10$, by using H\"{o}lder inequality and Bernstein inequality, we have
\begin{eqnarray}
&&|I_{6}|\leq C\sum N\langle N\rangle^{s+a-2}\langle N_{1}\rangle^{-s}\langle N_{2}\rangle^{-s}\langle N_{3}\rangle^{-s}
\|P_{N_{1}}u_{1}\|_{L_{xT}^{2}}\|P_{N}h\|_{L_{xT}^{2}}\prod_{j=2}^{3}\|P_{N_{j}}u_{j}\|_{L_{xT}^{\infty}}\nonumber\\
&&\leq C\sum NN_{2}^{\frac{n}{2}}N_{3}^{\frac{n}{2}}\langle N_{1}\rangle^{-s}\langle N_{2}\rangle^{-s}
\|P_{N}h\|_{L_{xT}^{2}}\prod_{j=1}^{3}\|P_{N_{j}}u_{j}\|_{X_{0,b}}\nonumber\\
&&\leq C\sum NN_{1}^{\frac{n}{4}}N_{2}^{\frac{n}{4}}N_{3}^{\frac{n}{2}}\|P_{N}h\|_{L_{xT}^{2}}\prod_{j=1}^{3}\|P_{N_{j}}u_{j}\|_{X_{0,b}}\nonumber\\
&&\leq C\|h\|_{L_{xT}^{2}}\prod_{j=1}^{3}\|u_{j}\|_{X_{0,b}}.\label{3.093}
\end{eqnarray}
{\bf Case 2}: When $N_{1}\geq 10$,  $N_{1}\gg N_{2}\geq N_{3}$, we consider the following cases: {\bf Case 2A}: $N_{2}\leq 8$ and {\bf Case
2B}: $N_{2}\geq 8$.

\noindent {\bf Case 2A}: $N_{1}\geq 10$,  $N_{1}\gg N_{2}\geq N_{3}$, $N_{2}\leq 8$, we have $N_{1}\sim N$.
By using H\"{o}lder inequality and Bernstein inequality, $0\leq a\leq 1-2\epsilon$, we have
\begin{eqnarray}
&&|I_{6}|\leq C\sum NN^{s+a-2}N_{1}^{-s}\langle N_{2}\rangle^{-s}\langle N_{3}\rangle^{-s}
\|P_{N_{1}}u_{1}\|_{L_{xT}^{2}}\|P_{N}h\|_{L_{xT}^{2}}\prod_{j=2}^{3}\|P_{N_{j}}u_{j}\|_{L_{xT}^{\infty}}\nonumber\\
&&\leq C\sum N^{a-1}N_{2}^{\frac{n}{2}}N_{3}^{\frac{n}{2}}\langle N_{2}\rangle^{-s}\langle N_{3}\rangle^{-s}
\|P_{N}h\|_{L_{xT}^{2}}\prod_{j=1}^{3}\|P_{N_{j}}u_{j}\|_{X_{0,b}}\nonumber\\
&&\leq C\sum N_{1}^{a-1}N_{2}^{\frac{n}{2}}N_{3}^{\frac{n}{2}}\|P_{N}h\|_{L_{xT}^{2}}\prod_{j=1}^{3}\|P_{N_{j}}u_{j}\|_{X_{0,b}}\nonumber\\
&&\leq C\|h\|_{L_{xT}^{2}}\prod_{j=1}^{3}\|u_{j}\|_{X_{0,b}}.\label{3.094}
\end{eqnarray}
{\bf Case 2B}: $N_{1}\geq 10$,  $N_{1}\gg N_{2}$, $N_{2}\geq 8$,  we consider the following  cases: {\bf Case 2$B_{1}$}: $N_{3}\leq 4$ and {\bf Case
2$B_{2}$}: $N_{3}\geq 4$.

\noindent{\bf Case 2$B_{1}$}: $N_{3}\leq 4$, we have $N\sim N_{1}$.
By using H\"{o}lder inequality and \eqref{2.06}, $n\geq 5$, $s\geq\frac{2n-5}{4}+2\epsilon$, $0\leq a\leq \max\{1-2\epsilon, s+\frac{5-n}{2}-4\epsilon\}$, we have
\begin{eqnarray}
&&|I_{6}|\leq C\sum NN^{s+a-2}N_{1}^{-s}N_{2}^{-s}\langle N_{3}\rangle^{-s}
\|P_{N_{1}}u_{1}P_{N_{2}}u_{2}P_{N_{3}}u_{3}\|_{L_{xT}^{2}}\|P_{N}h\|_{L_{xT}^{2}}\nonumber\\
&&\leq C\sum N_{1}^{-\epsilon}N_{1}^{a-1+\epsilon}N_{1}^{-\frac{1}{2}+\epsilon}N_{2}^{\frac{n}{2}-1-s+2\epsilon}N_{3}^{\frac{n}{2}}
\|P_{N}h\|_{L_{xT}^{2}}\prod_{j=1}^{3}\|P_{N_{j}}u_{j}\|_{X_{0,b}}\nonumber\\
&&\leq C\sum N_{1}^{-\epsilon} N_{2}^{-s+a-\frac{5-n}{2}+4\epsilon}N_{3}^{\frac{n}{2}}\|P_{N}h\|_{L_{xT}^{2}}\prod_{j=1}^{3}\|P_{N_{j}}u_{j}\|_{X_{0,b}}\nonumber\\
&&\leq C\|h\|_{L_{xT}^{2}}\prod_{j=1}^{3}\|u_{j}\|_{X_{0,b}}.\label{3.095}
\end{eqnarray}
{\bf Case 2$B_{2}$}: $N_{3}\geq 4$, we have $N\sim N_{1}$, $N_{2}\geq N_{3}\geq4$. When $s\geq \frac{n}{2}$,
by using H\"{o}lder inequality and \eqref{2.06}, $0\leq a\leq 1-2\epsilon$, we have
\begin{eqnarray}
&&|I_{6}|\leq C\sum NN^{s+a-2}N_{1}^{-s}N_{2}^{-s}N_{3}^{-s}
\|P_{N_{1}}u_{1}P_{N_{2}}u_{2}P_{N_{3}}u_{3}\|_{L_{xT}^{2}}\|P_{N}h\|_{L_{xT}^{2}}\nonumber\\
&&\leq C\sum N_{1}^{-\epsilon}N_{1}^{a-1+\epsilon}N_{1}^{-\frac{1}{2}+\epsilon}N_{2}^{\frac{n}{2}-1+2\epsilon}N_{2}^{-s}N_{3}^{-s+\frac{n}{2}}
\|P_{N}h\|_{L_{xT}^{2}}\prod_{j=1}^{3}\|P_{N_{j}}u_{j}\|_{X_{0,b}}\nonumber\\
&&\leq C\sum N_{1}^{-\epsilon} \|P_{N}h\|_{L_{xT}^{2}}\prod_{j=1}^{3}\|P_{N_{j}}u_{j}\|_{X_{0,b}}\nonumber\\
&&\leq C\|h\|_{L_{xT}^{2}}\prod_{j=1}^{3}\|u_{j}\|_{X_{0,b}}.\label{3.096}
\end{eqnarray}
When $\frac{2n-5}{4}+2\epsilon\leq s\leq \frac{n}{2}$,
by using H\"{o}lder inequality and \eqref{2.06}, $0\leq a\leq 2s+\frac{5-2n}{2}-4\epsilon$, we have
\begin{eqnarray}
&&|I_{6}|\leq C\sum NN^{s+a-2}N_{1}^{-s}N_{2}^{-s}N_{3}^{-s}
\|P_{N_{1}}u_{1}P_{N_{2}}u_{2}P_{N_{3}}u_{3}\|_{L_{xT}^{2}}\|P_{N}h\|_{L_{xT}^{2}}\nonumber\\
&&\leq C\sum N_{1}^{-\epsilon}N_{1}^{a-1+\epsilon}N_{1}^{-\frac{1}{2}+\epsilon}N_{2}^{\frac{n}{2}-1+2\epsilon}N_{2}^{-s}N_{3}^{-s+\frac{n}{2}}
\|P_{N}h\|_{L_{xT}^{2}}\prod_{j=1}^{3}\|P_{N_{j}}u_{j}\|_{X_{0,b}}\nonumber\\
&&\leq C\sum N_{1}^{-\epsilon} N_{3}^{a-2s-\frac{5-2n}{2}+4\epsilon}\|P_{N}h\|_{L_{xT}^{2}}\prod_{j=1}^{3}\|P_{N_{j}}u_{j}\|_{X_{0,b}}\nonumber\\
&&\leq C\|h\|_{L_{xT}^{2}}\prod_{j=1}^{3}\|u_{j}\|_{X_{0,b}}.\label{3.097}
\end{eqnarray}
{\bf Case 3}: $N_{1}\geq 10$, $N_{1}\sim N_{2}\gg N_{3}$, we consider the following cases: {\bf Case 3A}: $N_{1}\gg N$ and {\bf Case
3B}: $N_{1}\sim N$.

\noindent {\bf Case 3A}: $N_{1}\geq 10$, $N_{1}\sim N_{2}\gg N_{3}$, $N_{1}\gg N$, we consider the following cases: {\bf Case 3$A_{1}$}: $N_{3}\leq 4$ and {\bf Case
3$A_{2}$}: $N_{3}\geq 4$.

\noindent{\bf Case 3$A_{1}$}: $N_{3}\leq 4$, when $N\leq 4$, by using H\"{o}lder inequality and \eqref{2.06}, $n\geq 5$, $s\geq\frac{2n-5}{4}+2\epsilon$, we have
\begin{eqnarray}
&&|I_{6}|\leq C\sum N\langle N\rangle^{s+a-2}N_{1}^{-s}N_{2}^{-s}\langle N_{3}\rangle^{-s}
\|P_{N_{1}}u_{1}P_{N_{2}}u_{2}P_{N_{3}}u_{3}\|_{L_{xT}^{2}}\|P_{N}h\|_{L_{xT}^{2}}\nonumber\\
&&\leq C\sum NN_{1}^{-\epsilon}N_{1}^{\frac{n-3}{2}-2s+4\epsilon}N_{3}^{\frac{n}{2}}
\|P_{N}h\|_{L_{xT}^{2}}\prod_{j=1}^{3}\|P_{N_{j}}u_{j}\|_{X_{0,b}}\nonumber\\
&&\leq C\sum NN_{1}^{-\epsilon}N_{3}^{\frac{n}{2}}\|P_{N}h\|_{L_{xT}^{2}}\prod_{j=1}^{3}\|P_{N_{j}}u_{j}\|_{X_{0,b}}\nonumber\\
&&\leq C\|h\|_{L_{xT}^{2}}\prod_{j=1}^{3}\|u_{j}\|_{X_{0,b}}.\label{3.098}
\end{eqnarray}
When $N\geq 4$, $n\geq 5$, $s\geq\frac{2n-5}{4}+2\epsilon$, by using H\"{o}lder inequality and \eqref{2.06}, $0\leq a\leq 1-2\epsilon$,  we have
\begin{eqnarray}
&&|I_{6}|\leq C\sum NN^{s+a-2}N_{1}^{-s}N_{2}^{-s}\langle N_{3}\rangle^{-s}
\|P_{N_{1}}u_{1}P_{N_{2}}u_{2}P_{N_{3}}u_{3}\|_{L_{xT}^{2}}\|P_{N}h\|_{L_{xT}^{2}}\nonumber\\
&&\leq C\sum N_{1}^{-\epsilon}N^{s+a-1}N_{1}^{\frac{n-3}{2}-2s+4\epsilon}N_{3}^{\frac{n}{2}}\|P_{N}h\|_{L_{xT}^{2}}
\prod_{j=1}^{3}\|P_{N_{j}}u_{j}\|_{X_{0,b}}\nonumber\\
&&\leq C\sum N_{1}^{-\epsilon}N_{1}^{\frac{n-3}{2}-s+4\epsilon}N_{3}^{\frac{n}{2}}\|P_{N}h\|_{L_{xT}^{2}}
\prod_{j=1}^{3}\|P_{N_{j}}u_{j}\|_{X_{0,b}}\nonumber\\
&&\leq C\sum N_{1}^{-\epsilon}N_{3}^{\frac{n}{2}}\|P_{N}h\|_{L_{xT}^{2}}\prod_{j=1}^{3}\|P_{N_{j}}u_{j}\|_{X_{0,b}}\nonumber\\
&&\leq C\|h\|_{L_{xT}^{2}}\prod_{j=1}^{3}\|u_{j}\|_{X_{0,b}}.\label{3.099}
\end{eqnarray}
\noindent{\bf Case 3$A_{2}$}: $N_{3}\geq 4$, when $N\leq 4$, by using H\"{o}lder inequality and \eqref{2.06}, $n\geq 5$, $s\geq\frac{2n-5}{4}+2\epsilon$, we have
\begin{eqnarray}
&&|I_{6}|\leq C\sum N\langle N\rangle^{s+a-2}N_{1}^{-s}N_{2}^{-s} N_{3}^{-s}
\|P_{N_{1}}u_{1}P_{N_{2}}u_{2}P_{N_{3}}u_{3}\|_{L_{xT}^{2}}\|P_{N}h\|_{L_{xT}^{2}}\nonumber\\
&&\leq C\sum NN_{1}^{-\epsilon}N_{1}^{\frac{n-3}{2}-2s+4\epsilon}N_{3}^{-s+\frac{n}{2}}\|P_{N}h\|_{L_{xT}^{2}}
\prod_{j=1}^{3}\|P_{N_{j}}u_{j}\|_{X_{0,b}}\nonumber\\
&&\leq C\sum NN_{1}^{-\epsilon}\|P_{N}h\|_{L_{xT}^{2}}\prod_{j=1}^{3}\|P_{N_{j}}u_{j}\|_{X_{0,b}}
\leq C\|h\|_{L_{xT}^{2}}\prod_{j=1}^{3}\|u_{j}\|_{X_{0,b}}.\label{3.0100}
\end{eqnarray}
When $N\geq 4$, $n\geq 5$, $s\geq\frac{2n-5}{4}+2\epsilon>1$, $0\leq a\leq 2s+\frac{5-2n}{2}-4\epsilon$, by using H\"{o}lder inequality and \eqref{2.06}, we have
\begin{eqnarray}
&&|I_{6}|\leq C\sum NN^{s+a-2}N_{1}^{-s}N_{2}^{-s}N_{3}^{-s}
\|P_{N_{1}}u_{1}P_{N_{2}}u_{2}P_{N_{3}}\|_{L_{xT}^{2}}\|P_{N}h\|_{L_{xT}^{2}}\nonumber\\
&&\leq C\sum N_{1}^{-\epsilon}N^{s+a-1}N_{1}^{\frac{n-3}{2}-2s+4\epsilon}N_{3}^{-s+\frac{n}{2}}\|P_{N}h\|_{L_{xT}^{2}}
\prod_{j=1}^{3}\|P_{N_{j}}u_{j}\|_{X_{0,b}}\nonumber\\
&&\leq C\sum N_{1}^{-\epsilon}\|P_{N}h\|_{L_{xT}^{2}}\prod_{j=1}^{3}\|P_{N_{j}}u_{j}\|_{X_{0,b}}\nonumber\\
&&\leq C\|h\|_{L_{xT}^{2}}\prod_{j=1}^{3}\|u_{j}\|_{X_{0,b}}.\label{3.0101}
\end{eqnarray}
{\bf Case 3B}: $N_{1}\sim N_{2}\sim N$, when $N_{3}\leq 4$,
by using H\"{o}lder inequality and \eqref{2.06}, $n\geq 5$, $s\geq\frac{2n-5}{4}+2\epsilon$, $0\leq a\leq s+\frac{5-n}{2}-4\epsilon$, we have
\begin{eqnarray}
&&|I_{6}|\leq C\sum NN^{s+a-2}N_{1}^{-s}N_{2}^{-s}\langle N_{3}\rangle^{-s}
\|P_{N_{1}}u_{1}P_{N_{2}}u_{2}P_{N_{3}}\|_{L_{xT}^{2}}\|P_{N}h\|_{L_{xT}^{2}}\nonumber\\
&&\leq C\sum N_{1}^{-\epsilon}N^{s+a-1}N_{1}^{\frac{n-3}{2}-2s+4\epsilon}N_{3}^{\frac{n}{2}}\|P_{N}h\|_{L_{xT}^{2}}
\prod_{j=1}^{3}\|P_{N_{j}}u_{j}\|_{X_{0,b}}\nonumber\\
&&\leq C\sum N_{1}^{-\epsilon}N_{1}^{a-s-\frac{5-n}{2}+4\epsilon}N_{3}^{\frac{n}{2}}\|P_{N}h\|_{L_{xT}^{2}}\prod_{j=1}^{3}\|P_{N_{j}}u_{j}\|_{X_{0,b}}\nonumber\\
&&\leq C\sum N_{1}^{-\epsilon}N_{3}^{\frac{n}{2}}\|P_{N}h\|_{L_{xT}^{2}}\prod_{j=1}^{3}\|P_{N_{j}}u_{j}\|_{X_{0,b}}\nonumber\\
&&\leq C\|h\|_{L_{xT}^{2}}\prod_{j=1}^{3}\|u_{j}\|_{X_{0,b}}.\label{3.0102}
\end{eqnarray}
When $N_{3}\geq 4$, $\frac{2n-5}{4}+2\epsilon\leq s\leq \frac{n}{2}$,
by using H\"{o}lder inequality and \eqref{2.06}, $s\geq\frac{2n-5}{4}+2\epsilon$, $0\leq a\leq  2s+\frac{5-2n}{2}-4\epsilon$, we have
\begin{eqnarray}
&&|I_{6}|\leq C\sum NN^{s+a-2}N_{1}^{-s}N_{2}^{-s}N_{3}^{-s}
\|P_{N_{1}}u_{1}P_{N_{2}}u_{2}P_{N_{3}}\|_{L_{xT}^{2}}\|P_{N}h\|_{L_{xT}^{2}}\nonumber\\
&&\leq C\sum N_{1}^{-\epsilon}N^{s+a-1}N_{1}^{\frac{n-3}{2}-2s+4\epsilon}N_{3}^{-s+\frac{n}{2}}\|P_{N}h\|_{L_{xT}^{2}}
\prod_{j=1}^{3}\|P_{N_{j}}u_{j}\|_{X_{0,b}}\nonumber\\
&&\leq C\sum N_{1}^{-\epsilon}N_{1}^{a-2s-\frac{5-2n}{2}+4\epsilon}\|P_{N}h\|_{L_{xT}^{2}}\prod_{j=1}^{3}\|P_{N_{j}}u_{j}\|_{X_{0,b}}\nonumber\\
&&\leq C\sum N_{1}^{-\epsilon}\|P_{N}h\|_{L_{xT}^{2}}\prod_{j=1}^{3}\|P_{N_{j}}u_{j}\|_{X_{0,b}}\nonumber\\
&&\leq C\|h\|_{L_{xT}^{2}}\prod_{j=1}^{3}\|u_{j}\|_{X_{0,b}}.\label{3.0103}
\end{eqnarray}
When $N_{3}\geq 4$, $s\geq \frac{n}{2}$,
by using H\"{o}lder inequality and \eqref{2.06}, $n\geq 5$, $s\geq\frac{2n-5}{4}+2\epsilon$, $0\leq a\leq  s+\frac{5-n}{2}-4\epsilon$, we have
\begin{eqnarray}
&&|I_{6}|\leq C\sum NN^{s+a-2}N_{1}^{-s}N_{2}^{-s}N_{3}^{-s}
\|P_{N_{1}}u_{1}P_{N_{2}}u_{2}P_{N_{3}}\|_{L_{xT}^{2}}\|P_{N}h\|_{L_{xT}^{2}}\nonumber\\
&&\leq C\sum N_{1}^{-\epsilon}N^{s+a-1}N_{1}^{\frac{n-3}{2}-2s+4\epsilon}N_{3}^{-s+\frac{n}{2}}\|P_{N}h\|_{L_{xT}^{2}}
\prod_{j=1}^{3}\|P_{N_{j}}u_{j}\|_{X_{0,b}}\nonumber\\
&&\leq C\sum N_{1}^{-\epsilon}N_{1}^{a-s-\frac{5-n}{2}+4\epsilon}\|P_{N}h\|_{L_{xT}^{2}}\prod_{j=1}^{3}\|P_{N_{j}}u_{j}\|_{X_{0,b}}\nonumber\\
&&\leq C\sum N_{1}^{-\epsilon}\|P_{N}h\|_{L_{xT}^{2}}\prod_{j=1}^{3}\|P_{N_{j}}u_{j}\|_{X_{0,b}}\nonumber\\
&&\leq C\|h\|_{L_{xT}^{2}}\prod_{j=1}^{3}\|u_{j}\|_{X_{0,b}}.\label{3.0104}
\end{eqnarray}
{\bf Case 4}: $N_{1}\geq 10$, $N_{1}\sim N_{2}\sim N_{3}\gg N$, when $N\leq 4$, by using H\"{o}lder inequality and \eqref{2.06}, $n\geq 5$, $s\geq\frac{2n-5}{4}+2\epsilon$, we have
\begin{eqnarray}
&&|I_{6}|\leq C\sum N\langle N\rangle^{s+a-2}N_{1}^{-s}N_{2}^{-s}N_{3}^{-s}
\|P_{N_{1}}u_{1}P_{N_{2}}u_{2}P_{N_{3}}\|_{L_{xT}^{2}}\|P_{N}h\|_{L_{xT}^{2}}\nonumber\\
&&\leq C\sum NN_{1}^{-\epsilon}N_{1}^{\frac{2n-3}{2}-3s+4\epsilon}\|P_{N}h\|_{L_{xT}^{2}}
\prod_{j=1}^{3}\|P_{N_{j}}u_{j}\|_{X_{0,b}}\nonumber\\
&&\leq C\sum NN_{1}^{-\epsilon}\|P_{N}h\|_{L_{xT}^{2}}\prod_{j=1}^{3}\|P_{N_{j}}u_{j}\|_{X_{0,b}}\nonumber\\
&&\leq C\|h\|_{L_{xT}^{2}}\prod_{j=1}^{3}\|u_{j}\|_{X_{0,b}}.\label{3.0105}
\end{eqnarray}
When $N\geq 4$, $n\geq 5$, $s\geq \frac{2n-5}{4}+2\epsilon>1$, $0\leq a\leq 2s+\frac{5-2n}{2}-4\epsilon$, by using H\"{o}lder inequality and \eqref{2.06}, we have
\begin{eqnarray}
&&|I_{6}|\leq C\sum NN^{s+a-2}N_{1}^{-s}N_{2}^{-s}N_{3}^{-s}
\|P_{N_{1}}u_{1}P_{N_{2}}u_{2}P_{N_{3}}\|_{L_{xT}^{2}}\|P_{N}h\|_{L_{xT}^{2}}\nonumber\\
&&\leq C\sum N_{1}^{-\epsilon}N^{s+a-1}N_{1}^{\frac{2n-3}{2}-3s+4\epsilon}\|P_{N}h\|_{L_{xT}^{2}}
\prod_{j=1}^{3}\|P_{N_{j}}u_{j}\|_{X_{0,b}}\nonumber\\
&&\leq C\sum N_{1}^{-\epsilon}N_{1}^{a-\frac{5-2n}{2}-2s+4\epsilon}\|P_{N}h\|_{L_{xT}^{2}}
\prod_{j=1}^{3}\|P_{N_{j}}u_{j}\|_{X_{0,b}}\nonumber\\
&&\leq C\sum N_{1}^{-\epsilon}\|P_{N}h\|_{L_{xT}^{2}}\prod_{j=1}^{3}\|P_{N_{j}}u_{j}\|_{X_{0,b}}\nonumber\\
&&\leq C\|h\|_{L_{xT}^{2}}\prod_{j=1}^{3}\|u_{j}\|_{X_{0,b}}.\label{3.0106}
\end{eqnarray}
{\bf Case 5}: $N_{1}\geq 10$, $N_{1}\sim N_{2}\sim N_{3}\sim N$, by using H\"{o}lder inequality and \eqref{2.06}, $s\geq \frac{2n-5}{4}+2\epsilon$, $0\leq a\leq  2s+\frac{5-2n}{2}-4\epsilon$,  we have
\begin{eqnarray}
&&|I_{6}|\leq C\sum NN^{s+a-2}N_{1}^{-s}N_{2}^{-s}N_{3}^{-s}
\|P_{N_{1}}u_{1}P_{N_{2}}u_{2}P_{N_{3}}\|_{L_{xT}^{2}}\|P_{N}h\|_{L_{xT}^{2}}\nonumber\\
&&\leq C\sum N_{1}^{-\epsilon}N_{1}^{a-\frac{5-2n}{2}-2s+4\epsilon}\|P_{N}h\|_{L_{xT}^{2}}
\prod_{j=1}^{3}\|P_{N_{j}}u_{j}\|_{X_{0,b}}\nonumber\\
&&\leq C\sum N_{1}^{-\epsilon}\|P_{N}h\|_{L_{xT}^{2}}\prod_{j=1}^{3}\|P_{N_{j}}u_{j}\|_{X_{0,b}}\nonumber\\
&&\leq C\|h\|_{L_{xT}^{2}}\prod_{j=1}^{3}\|u_{j}\|_{X_{0,b}}.\label{3.0107}
\end{eqnarray}

This completes the proof of Lemma 3.6.

\bigskip

\section{Proof of Theorem 1.1}
\setcounter{equation}{0}
\setcounter{Theorem}{0}

\setcounter{Lemma}{0}

\setcounter{section}{4}
This section is devoted to the proof of Theorem 1.1.

For $k=1$, by using Duhamel's formula, we have
\begin{eqnarray}
&&u(t,x)=\eta(t)\left(U_{1}(t)f(x)+U_{2}(t)g_{x}(x)\right)+\eta\left(\frac{t}{T}\right)\int_{0}^{t}U_{2}(t-s)\Delta(u^{2})(s,x)ds\nonumber\\
&&=u_{1}(t,x)+u_{2}(t,x)+u_{3}(t,x),\label{4.01}
\end{eqnarray}
where
\begin{eqnarray*}
&&u_{1}(t,x):=\eta(t)U_{1}(t)f(x),\,\, u_{2}(t,x):=\eta(t)U_{2}(t)g_{x}(x),\\
&& u_{3}(t,x):=\eta\left(\frac{t}{T}\right)\int_{0}^{t}U_{2}(t-s)\Delta(u^{2})(s,x)ds.
\end{eqnarray*}
When $s>s_{1}$, $(f,g)\in H^{s}(\R^{n})\times H^{s-2}(\R^{n})$, and $t\in[-T,T]$, $0<T<1$, by using Lemmas 2.3-2.4, 3.1-3.3 and $b=\frac{1}{2}+\frac{\epsilon}{24}$, we have
\begin{eqnarray}
&&\|u_{1}+u_{2}\|_{H^{s}}\leq C\|u_{1}+u_{2}\|_{X_{s,b}}=\left\|\eta(t)(U_{1}(t)f+U_{2}(t)g_{x})\right\|_{X_{s,b}}\nonumber\\
&&\leq  C\left(\|f\|_{H^{s}}+\|g\|_{H^{s-2}}\right),\label{4.02}\\
&&u_{3}(t,x)\in  X_{s+a,\>b}(\R^{n+1})\subset C([-T,T];H^{s+a}(\R^{n}))
\label{4.03},
\end{eqnarray}
where $s_{1}$ and $a$ are defined as in \eqref{1.04} and \eqref{1.03}, respectively.

This completes the proof of Theorem 1.1.

\bigskip

\section{Proofs of Theorems 1.2-1.3}
\setcounter{equation}{0}
\setcounter{Theorem}{0}

\setcounter{Lemma}{0}

\setcounter{section}{5}
This section is devoted to the proofs of Theorems 1.2-1.3.

We first prove Theorem 1.2.
For $k=1$, when $s>s_{2}$, $(f,g)\in H^{s}(\R^{n})\times H^{s-2}(\R^{n})$ and $t\in[-T,T]$ with $0<T<1$, by using Theorem 1.1, Lemma 2.4 and $b=\frac{1}{2}+\frac{\epsilon}{24}$, we have
\begin{eqnarray}
&&u_{3}(t,x)\in X_{\frac{n}{2}+\epsilon,\>b}(\R^{n+1})\subset C([-T,T];H^{\frac{n}{2}+\epsilon}(\R^{n}))
\label{5.01}.
\end{eqnarray}
By using \eqref{5.01}, we have
\begin{eqnarray}
&&\lim_{t\longrightarrow0}\|u_{3}(t)\|_{L_{x}^{\infty}}=0.\label{5.02}
\end{eqnarray}
From \eqref{5.02}, we have that Theorem 1.2 is valid.

Next, we prove Theorem 1.3. The proof of Theorem 1.3 requires the following lemma:
\begin{Lemma}\label{Lemma5.1}
Let $g\in H^{s}(\R^{n})(s>\frac{n}{2})$.
Then, we have
\begin{eqnarray*}
&&\lim\limits_{|x|\longrightarrow+\infty}g=0.
\end{eqnarray*}
\end{Lemma}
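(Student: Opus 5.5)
The plan is to show that $\widehat{g}\in L^{1}(\R^{n})$ and then apply the Riemann--Lebesgue lemma. Since $s>\frac{n}{2}$, the Cauchy--Schwarz inequality gives
\begin{eqnarray*}
\int_{\SR^{n}}|\mathscr{F}_{x}g(\xi)|d\xi\leq \left(\int_{\SR^{n}}\langle\xi\rangle^{-2s}d\xi\right)^{\frac{1}{2}}\left(\int_{\SR^{n}}\langle\xi\rangle^{2s}|\mathscr{F}_{x}g(\xi)|^{2}d\xi\right)^{\frac{1}{2}}\leq C\|g\|_{H^{s}}<\infty .
\end{eqnarray*}
Here $\int_{\SR^{n}}\langle\xi\rangle^{-2s}d\xi<\infty$ precisely because $2s>n$. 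This is the only place where the hypothesis $s>\frac{n}{2}$ enters, and it is the key step.

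Next, since $\mathscr{F}_{x}g\in L^{1}\cap L^{2}$, the Fourier inversion formula holds almost everywhere:
\begin{eqnarray*}
g(x)=\frac{1}{(2\pi)^{\frac{n}{2}}}\int_{\SR^{n}}e^{ix\cdot\xi}\mathscr{F}_{x}g(\xi)d\xi .
\end{eqnarray*}
The right-hand side is a bounded continuous function, by dominated convergence. We therefore identify $g$ with this continuous representative, which is the meaning of the pointwise limit in the statement.

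Finally, we apply the Riemann--Lebesgue lemma to $\mathscr{F}_{x}g\in L^{1}(\R^{n})$: the Fourier transform of an $L^{1}$ function tends to zero at infinity. If one wants a self-contained argument, approximate $\mathscr{F}_{x}g$ in $L^{1}$ by $\psi\in C_{c}^{\infty}(\R^{n})$. Then $|g(x)-\mathscr{F}^{-1}_{\xi}\psi(x)|\leq C\|\mathscr{F}_{x}g-\psi\|_{L^{1}}$ uniformly in $x$. Moreover, $\mathscr{F}^{-1}_{\xi}\psi$ is a Schwartz function, which decays at infinity, for instance by integrating by parts in $\xi$ to obtain $|x|^{-2}|\mathscr{F}^{-1}_{\xi}(\Delta\psi)(x)|\leq C_{\psi}|x|^{-2}$. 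Letting $|x|\to\infty$ and then sending the $L^{1}$ approximation error to zero yields $\lim_{|x|\to+\infty}g(x)=0$.

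There is no real obstacle. The only points of care are the integrability condition $2s>n$ and the choice of the continuous representative. In the application to Theorem 1.3, Lemma 5.1 will be used with $g=u_{3}(t)\in H^{\frac{n}{2}+\epsilon}(\R^{n})$, as in \eqref{5.01}, for each fixed $t\in[-T,T]$.
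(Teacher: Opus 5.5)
Your proof is correct and complete. The bound $\|\mathscr{F}_{x}g\|_{L^{1}}\le C\|g\|_{H^{s}}$ follows from Cauchy--Schwarz and $2s>n$. You then identify $g$ with the bounded continuous function $\mathscr{F}^{-1}_{\xi}(\mathscr{F}_{x}g)$, and you rightly point out that the pointwise limit refers to this continuous representative. Finally, the Riemann--Lebesgue lemma, including your $C_{c}^{\infty}$ approximation in $L^{1}$ and the bound $|\mathscr{F}^{-1}_{\xi}\psi(x)|\le C_{\psi}|x|^{-2}$, gives exactly the claim.

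The paper gives no argument of its own here. It only refers to \cite[Lemma 9.2]{YYY2026}, so your write-up works as a self-contained replacement for that citation. An equally short variant approximates $g$ in $H^{s}$ by some $\varphi\in C_{c}^{\infty}(\R^{n})$ and uses the embedding $\|g-\varphi\|_{L^{\infty}}\le C\|g-\varphi\|_{H^{s}}$ for $s>\frac{n}{2}$, which gives $\limsup_{|x|\to\infty}|g(x)|\le C\|g-\varphi\|_{H^{s}}$. That embedding is proved by the same Cauchy--Schwarz estimate you use. So the variant differs from your route only in where the approximation happens: $g$ in $H^{s}$ rather than $\mathscr{F}_{x}g$ in $L^{1}$.
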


For Lemma 5.1, we refer to \cite[Lemma 9.2]{YYY2026}.

Now, we prove Theorem 1.3. From \eqref{5.01}, we have
\begin{eqnarray}
&&\sup_{t\in[-T,T]}\|u_{3}\|_{H^{\frac{n}{2}+\epsilon}}<\infty.\label{5.03}
\end{eqnarray}
Combining  \eqref{5.03} with Lemma 5.1, for $\forall t\in[-T,T],$ we have that
\begin{eqnarray}
&&\lim\limits_{|x|\longrightarrow+\infty}u_{3}(t,x)=0.\label{5.04}
\end{eqnarray}
From \eqref{5.04}, we have that Theorem 1.3 is valid.

This completes the proofs of Theorems 1.2-1.3.

\bigskip

\section{Proofs of Theorems 1.4-1.6}
\setcounter{equation}{0}
\setcounter{Theorem}{0}

\setcounter{Lemma}{0}

\setcounter{section}{6}

Applying Lemmas 2.3-2.4 and 3.4-3.6, together with arguments similar to those in the proofs of Theorems 1.1-1.3, we conclude that Theorems 1.4-1.6 hold.

\bigskip
\bigskip

\leftline{\large \bf Acknowledgments}
Wei Yan was supported by Natural Science Foundation of Henan province (No. 262300421062).

\end{document}